\def\<{\langle}
\def\>{\rangle}
\def\a{\alpha}
\def\b{\beta}
\def\d{\delta}
\def\g{\gamma}
\def\v{\epsilon }
\newtheorem{df}{Definition}[section]
\newtheorem{thm}{Theorem}[section]
\newtheorem{cor}{Corollary}[section]
\newtheorem{rem}{Remark}[section]
\newtheorem{prop}{Proposition}[section]
\newtheorem{exa}{Example}[section]
\newtheorem{lem}{Lemma}[section]
\let\dis= \displaystyle
\date{}
\begin{document}
\renewcommand{\baselinestretch}{1.2}
\renewcommand{\arraystretch}{1.0}
\title{\bf  BiHom-alternative, BiHom-Malcev   and BiHom-Jordan algebras}
\author{{\bf Taoufik Chtioui$^{1}$, Sami Mabrouk$^{2}$, Abdenacer Makhlouf$^{3}$ }\\
{\small 1.  University of Sfax, Faculty of Sciences Sfax,  BP
1171, 3038 Sfax, Tunisia} \\
{\small 2.  University of Gafsa, Faculty of Sciences Gafsa,  2112 Gafsa, Tunisia}\\
{\small 3.~ IRIMAS - département de Mathématiques,
6, rue des frères Lumière, F-68093 Mulhouse, France}}
 \maketitle
 \maketitle

\begin{abstract}
The purpose of this paper is to introduce and study  BiHom-alternative algebras and BiHom-Malcev  algebras. It is shown that BiHom-alternative algebras are BiHom-Malcev  admissible and BiHom-Jordan admissible. Moreover, BiHom-type generalizations of some well known
identities in alternative algebras, including the Moufang identities, are obtained.
\end{abstract}
\begin{small}
{\bf Key words}: BiHom-Malcev  algebra, BiHom-Malcev-admissible algebra, BiHom-alternative algebra, BiHom-Moufang identity, BiHom-Jordan algebra, BiHom-Jordan-admissible algebra.

 {\bf  2010 Mathematics Subject Classification:} 17D05, 17D10, 17A15.
 \end{small}
 \normalsize\vskip1cm

\section*{Introduction}
\renewcommand{\theequation}{\thesection.\arabic{equation}}
 An alternative algebra is an algebra whose associator is an alternating function. In particular, all associative algebras are alternative. The class of 8-dimensional Cayley
algebras (or Cayley-Dickson algebras, the prototype having been discovered
in 1845 by Cayley and later generalized by Dickson) is an important class of alternative algebras which are not associative.
Another important class of nonassociative algebras is the Malcev  algebras which were introduced in 1955 by A.I. Malcev  \cite{maltsev}, 
generalizing Lie algebras.  They  play an important role in Physics and the geometry of smooth loops.   Just as the tangent algebra of a Lie group is a Lie algebra, the tangent algebra of a locally analytic Moufang loop is a Malcev  algebra \cite{kerdman,kuzmin,maltsev,nagy,sabinin}, see also   \cite{gt,myung,okubo} for discussions about connections with physics. Jordan algebras were also introduced in Physics  context by Pascual Jordan in 1933 to provide a new formalism for Quantum Mechanics. The theory was developed from the algebraic viewpoint  by Jacobson in \cite{jacobson}. 

Alternative algebras are  closely related to Jordan algebras \cite{gt,jvw,okubo,sv}  and Malcev  algebras \cite{maltsev} in the same way that associative algebras are related to Jordan and Lie algebras.
Indeed, as Malcev  observed in \cite{maltsev}, every alternative algebra A is Malcev-admissible, i.e., the
commutator algebra $A^-$ is a Malcev  algebra.
On the other hand, starting with an alternative algebra $A$, it is known that the Jordan product (named also the anti-commutator)
gives a Jordan algebra. In other words, alternative algebras are Jordan-admissible.

Algebras where the identities defining the structure are twisted by a homomorphism are called
Hom-algebras. They have been intensively investigated in the literature recently. The theory of Hom-algebra
started from Hom-Lie algebras introduced and discussed in \cite{hls, LarssonSilvestrov}, motivated
by quasi-deformations of Lie algebras of vector fields, in particular q-deformations of Witt and
Virasoro algebras. Hom-associative algebras were introduced in \cite{ms} while Hom-alternative and
Hom-Jordan algebras were introduced in \cite{mak, Yau3} as twisted generalizations of alternative and Jordan
algebras respectively. Motivated by a categorical study of Hom-algebra and new type of categories,  
the authors introduced In
\cite{GrazianiMakhloufMeniniPanaite}  a generalized algebraic structure dealing with two commuting
multiplicative linear maps, called BiHom-algebras including 
BiHom-associative algebras and  BiHom-Lie algebras. When the two linear maps are the 
same, then BiHom-algebras will be turn to Hom-algebras in some cases. 

Th aim of this paper to consider  the BiHom-type of alternative, Malcev  and Jordan algebras.
We discuss some of their
properties and provide construction procedures using ordinary alternative algebras, Malcev  algebras or Jordan algebras.
Also, we show that every regular BiHom-alternative algebra is BiHom-Malcev  admissible and BiHom-Jordan admissible.

In the first section, we introduce the notion of left (resp. right) BiHom-alternative algebra and study the properties.  We also, provide  a construction Theorem of BiHom-alternative algebras.  We show that an ordinary alternative algebra along with two commuting morphisms lead to a BiHom-alternative algebra. In Section, we deal with BiHom-Malcev algebra and give similar Twisting construction. Moreover, we prove that regular BiHom-alternative algebras are BiHom-Malcev-admissible and study BiHom-type Bruck-Kleinfeld function.  In Section 4, we discuss further properties and BiHom-Moufang identities. In the last Section, we show that BiHom-alternative algebras are BiHom-Jordan-admissible.

\section{BiHom-Alternative algebras}


Throughout this paper $\mathbb{K}$ is an algebraically closed field of characteristic $0$ and $A$ is a $\mathbb{K}$-vector space. In the sequel, a BiHom-algebra refers to a quadruple $(A,\mu,\alpha,\beta)$, where $\mu: A\otimes A\rightarrow A$,   $\alpha: A\rightarrow A$ and $\beta: A\rightarrow A$ are linear maps. The composition of maps is denoted by concatenation for simplicity.

In this section, we introduce BiHom-alternative algebras,  study their general properties and  provide some construction results.

\subsection{Definitions and Properties }

\begin{df}
Let $(A,\mu,\alpha,\beta)$ be a BiHom-algebra.
\begin{enumerate}
  \item The  BiHom-associator of $A$  is the trilinear map $as_{\alpha,\beta}:A^{\otimes 3} \longrightarrow A$ defined by
\begin{equation}\label{BiHomAss}
    as_{\alpha,\beta}=\mu \circ (\mu\otimes \beta- \alpha \otimes \mu).
\end{equation}
In terms of elements, the map $as_{\alpha,\beta}$ is given by
$$ as_{\alpha,\beta}(x,y,z)=\mu(\mu(x,y),\beta(z))-\mu(\alpha(x),\mu(y,z)),\ \forall x,y,z \in A$$
  \item  The BiHom-Jacobiator of $A$  is the trilinear map  $J_{\alpha,\beta}: A^{\times3} \longrightarrow A$ defined as
\begin{equation}\label{BiHomJacobi}
  J_{\alpha,\beta}(x,y,z) = \circlearrowleft_{x,y,z}\mu(\beta^2(x),\mu(\beta(y),\alpha(z)))
\end{equation}
\end{enumerate}
\end{df}
Note that when $(A,\mu)$ is a algebra $($with $\alpha=\beta=id )$, its BiHom-associator and BiHom-Jacobiator coincide with its usual associator and Jacobiator, respectively.  

\begin{df}
A BiHom-associative algebra is a quadruple $(A,\mu,\alpha,\beta)$
where $\alpha,\beta: A \rightarrow A$, are linear maps and $\mu : A\times A\rightarrow A$ is a bilinear map such that  $\alpha  \beta =\beta \alpha$, $\alpha\mu=\mu\alpha^{\otimes2}$, $\beta\mu=\mu\beta^{\otimes 2}$ and
satisfying the following condition:
\begin{equation}\label{BiHom ass identity}
    as_{\alpha,\beta}(x,y,z)=0,\ \text{for all}\ x,y,z \in A\ \textrm{(BiHom-associativity condition).}
\end{equation}
The maps $\alpha$, $\beta$ are called the structure maps of $A$.
\end{df}

Clearly, a Hom-associative algebra $(A,\mu,\alpha)$ can be regarded as the BiHom-associative
algebra $(A,\mu,\alpha,\alpha)$.

\begin{df}
A \textbf{left BiHom-alternative algebra} $($resp. \textbf{right BiHom-alternative algebra}$)$ is a quadruple $(A,\mu,\alpha,\beta)$
 where $\alpha,\beta: A \rightarrow A$, are linear maps and $\mu : A\times A\rightarrow A$ is a bilinear map such that  $\alpha  \beta =\beta \alpha$, $\alpha\mu=\mu\alpha^{\otimes2}$, $\beta\mu=\mu\beta^{\otimes 2}$ and
 satisfying the \textbf{left BiHom-alternative identity},
\begin{equation}\label{sa1}
as_{\alpha,\beta}(\beta(x),\alpha(y),z)+as_{\alpha,\beta}(\beta(y),\alpha(x),z)=0,
\end{equation}
respectively, the \textbf{right BiHom-alternative identity},
\begin{equation}\label{sa2}
as_{\alpha,\beta}(x,\beta(y),\alpha(z))+as_{\alpha,\beta}(x,\beta(z),\alpha(y))=0,
\end{equation}
 for all $x,y,z \in A$.
A \textbf{BiHom-alternative} algebra is one which is both a left and right BiHom-alternative algebra.
\end{df}

Observe that when $\alpha=\beta=id$ the left BiHom-alternative identity $(\ref{sa1})$ (resp.  right BiHom-alternative identity $(\ref{sa2}$))  reduces to  left alternative identity (resp.  right alternative identity).

\begin{rem}
Any BiHom-associative algebra is a BiHom-alternative algebra.
\end{rem}

\begin{lem} Let $(A,\mu,\alpha,\beta)$ be a BiHom-algebra such that  $\alpha  \beta =\beta \alpha$, $\alpha\mu=\mu\alpha^{\otimes2}$ and $\beta\mu=\mu\beta^{\otimes 2}$. Then
\begin{itemize}
  \item [(i)] $A$ is a left BiHom-alternative algebra if and only if
for all $x,y \in A$, we have
$$as_{\alpha,\beta}(\beta(x),\alpha(x),y)=0.$$
  \item [(ii)]$A$ is a right BiHom-alternative algebra if and only if
for all $x,y \in A$, we have
$$as_{\alpha,\beta}(x,\beta(y),\alpha(y))=0.$$
\end{itemize}
\end{lem}
\begin{proof}
  Follows by a direct computation that is left to the reader.
\end{proof}

\begin{df}
Let $(A,\mu,\alpha,\beta)$ and $(A^{'},\mu^{'},\alpha^{'},\beta^{'})$ be two BiHom-alternative algebras. A  homomorphism $f:A\longrightarrow A^{'}$ is said to be a BiHom-alternative algebra morphism  if the following holds
$$f  \mu= \mu^{'} (f \otimes f),~~f \alpha=\alpha^{'} f,~~f\beta=\beta^{'} f. $$
\end{df}

\begin{df}
A BiHom-algebra $(A,\mu,\alpha,\beta)$ is called
\begin{itemize}
  \item [(i)] regular if $\alpha$ and $\beta$ are algebra automorphisms.
  \item [(ii)] involutive if $\alpha$ and $\beta$ are two involutions,  that is $\alpha^2=\beta^2=id$.
\end{itemize}
\end{df}

\begin{prop}\label{propriete Bihom regular}
Let $(A,\mu,\alpha,\beta)$ be a regular BiHom-algebra such that  $\alpha  \beta =\beta \alpha$, $\alpha\mu=\mu\alpha^{\otimes2}$ and $\beta\mu=\mu\beta^{\otimes 2}$. Then $(A,\mu,\alpha,\beta)$ is a regular BiHom-alternative algebra if and only if, for all $x,y,z \in A$, we have:
\begin{equation}
as_{\alpha,\beta}(\beta^2(x),\alpha\beta(y),\alpha^2(z))+as_{\alpha,\beta}(\beta^2(y),\alpha\beta(x),\alpha^2(z))=0,
\end{equation}
and
\begin{equation}
as_{\alpha,\beta}(\beta^2(x),\alpha\beta(y),\alpha^2(z))+as_{\alpha,\beta}(\beta^2(x),\alpha\beta(z),\alpha^2(y))=0.
\end{equation}

\end{prop}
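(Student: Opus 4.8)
The plan is to use that, for a regular BiHom-algebra, $\alpha$ and $\beta$ are bijective and commute, so an identity quantified over all $(x,y,z)\in A^{\times 3}$ is equivalent to the identity obtained by precomposing each of the three arguments with any invertible operator built from $\alpha$, $\beta$ and their inverses. With this in hand the statement reduces to bookkeeping of substitutions. First I would treat the left identity: substituting $(x,y,z)\mapsto(\beta(x),\beta(y),\alpha^{2}(z))$ in \eqref{sa1} and using $\alpha\beta=\beta\alpha$ to rewrite $\alpha(\beta(y))=\alpha\beta(y)$ and $\alpha(\beta(x))=\alpha\beta(x)$, the two summands become $as_{\alpha,\beta}(\beta^{2}(x),\alpha\beta(y),\alpha^{2}(z))$ and $as_{\alpha,\beta}(\beta^{2}(y),\alpha\beta(x),\alpha^{2}(z))$, which is precisely the first displayed identity of the Proposition. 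Similarly, substituting $(x,y,z)\mapsto(\beta^{2}(x),\alpha(y),\alpha(z))$ in the right identity \eqref{sa2} and using $\beta\alpha=\alpha\beta$ turns the two summands into $as_{\alpha,\beta}(\beta^{2}(x),\alpha\beta(y),\alpha^{2}(z))$ and $as_{\alpha,\beta}(\beta^{2}(x),\alpha\beta(z),\alpha^{2}(y))$, i.e. the second displayed identity.

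For the converse I would run the same substitutions backwards, using that $\alpha^{-1}$ and $\beta^{-1}$ exist (regularity) and commute: applying $(x,y,z)\mapsto(\beta^{-1}(x),\beta^{-1}(y),\alpha^{-2}(z))$ to the first displayed identity recovers \eqref{sa1}, and applying $(x,y,z)\mapsto(\beta^{-2}(x),\alpha^{-1}(y),\alpha^{-1}(z))$ to the second displayed identity recovers \eqref{sa2} (here one uses $\alpha\beta\alpha^{-1}=\beta$). Since $A$ is assumed regular, being a regular BiHom-alternative algebra is by definition the same as satisfying \eqref{sa1} together with \eqref{sa2}, so the two equivalences just established complete the proof.

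No real obstacle is expected; the only point requiring care is tracking which power of $\alpha$ or $\beta$ ends up in which slot after each substitution and applying $\alpha\beta=\beta\alpha$ to normalise the middle argument to the form $\alpha\beta(\cdot)$. One could alternatively phrase the computation through the identity $as_{\alpha,\beta}\circ\gamma^{\otimes 3}=\gamma\circ as_{\alpha,\beta}$ for $\gamma\in\{\alpha,\beta\}$, which holds because $\alpha,\beta$ are commuting multiplicative maps, but the direct substitution argument is the shortest route.
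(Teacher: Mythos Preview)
Your argument is correct: the two displayed identities are obtained from \eqref{sa1} and \eqref{sa2} by the substitutions $(x,y,z)\mapsto(\beta(x),\beta(y),\alpha^{2}(z))$ and $(x,y,z)\mapsto(\beta^{2}(x),\alpha(y),\alpha(z))$ respectively, and regularity lets you invert these substitutions for the converse. The paper states this proposition without proof, presumably because it regards this reparametrization as routine; your write-up supplies exactly the expected details, and the alternative you mention via $as_{\alpha,\beta}\circ\gamma^{\otimes 3}=\gamma\circ as_{\alpha,\beta}$ would amount to the same computation.
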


\begin{lem}\label{propriet ass BiH-A} Let $(A,\mu,\alpha,\beta)$ be a BiHom-alternative algebra. Then for all $x , y , z \in A$ we have
$$as_{\alpha,\beta}(\beta^2(x),\beta\alpha(y),\alpha^2(z))=-as_{\alpha,\beta}(\beta^2(z),\beta\alpha(y),\alpha^2(x)),$$
$$as_{\alpha,\beta}(\beta^2(x),\beta\alpha(y),\alpha^2(z))=as_{\alpha,\beta}(\beta^2(z),\beta\alpha(x),\alpha^2(y)),$$
$$as_{\alpha,\beta}(\beta^2(x),\beta\alpha(y),\alpha^2(x))=0.$$
\end{lem}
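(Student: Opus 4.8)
The plan is to derive the three identities from the two basic alternating identities \eqref{sa1} and \eqref{sa2}, exactly as one does in the classical case, where the skew-symmetry of the associator follows by a short "swapping" argument. The key point is that, because $\alpha$ and $\beta$ are multiplicative and commute with each other, the expressions $as_{\alpha,\beta}(\beta^2(x),\beta\alpha(y),\alpha^2(z))$ are the natural "fully twisted" associators on which the linearized alternative identities act cleanly: linearizing the left identity (Lemma 1.1(i)) in $x$ gives \eqref{sa1}, and applying $\beta^2\otimes\alpha\beta\otimes\alpha^2$-type substitutions turns these into statements purely about the symbol $T(x,y,z):=as_{\alpha,\beta}(\beta^2(x),\beta\alpha(y),\alpha^2(z))$. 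So I first record that, after twisting all slots, \eqref{sa1} reads $T(x,y,z)+T(y,x,z)=0$ in the first two arguments… wait, one must be careful: \eqref{sa1} is alternation in the first slot against a "swap" that also exchanges which variable carries $\alpha$ and which carries $\beta$. I will instead work directly with Proposition \ref{propriete Bihom regular}, or rather with its non-regular analogue obtained by applying $\alpha,\beta$ to \eqref{sa1}–\eqref{sa2}; applying $\beta\otimes\alpha\otimes\mathrm{id}$ suitably to \eqref{sa1} and using multiplicativity yields
\[
as_{\alpha,\beta}(\beta^2(x),\beta\alpha(y),\alpha^2(z))+as_{\alpha,\beta}(\beta^2(y),\beta\alpha(x),\alpha^2(z))=0,
\]
and similarly from \eqref{sa2}
\[
as_{\alpha,\beta}(\beta^2(x),\beta\alpha(y),\alpha^2(z))+as_{\alpha,\beta}(\beta^2(x),\beta\alpha(z),\alpha^2(y))=0.
\]
Call these $(L)$ and $(R)$.

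The third identity of the lemma, $T(x,y,x)=0$, is immediate: set $z=x$ in $(L)$ to get $2T(x,y,x)=0$, and divide by $2$ (legitimate since $\mathrm{char}\,\mathbb{K}=0$). The first identity, $T(x,y,z)=-T(z,y,x)$, and the second, $T(x,y,z)=T(z,x,y)$, are then obtained by the classical "three transpositions" trick. Concretely: from $T(x,y,x)=0$ for all $x,y$, polarize $x\mapsto x+z$ to obtain $T(x,y,z)+T(z,y,x)=0$, which is exactly the first identity. For the second, combine this skew-symmetry in the outer slots with $(L)$ (skew in the first two slots) and $(R)$ (skew in the last two slots): the composite of the transposition $(13)$ with $(12)$ is the 3-cycle $(132)$, and tracking signs — two applications of skew-symmetry give an even number of sign flips — yields $T(x,y,z)=T(z,x,y)$. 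This is the standard computation that in any algebra whose associator changes sign under transposing any two adjacent (here: the designated pairs) arguments, the associator is in fact alternating, hence invariant under cyclic permutations up to sign; here the twists are arranged precisely so that the pairs being transposed are the "compatible" ones.

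The main obstacle — and the only genuinely delicate point — is bookkeeping the $\alpha,\beta$ decorations: one has to check that polarizing $T(x,y,x)=0$ really produces $T(x,y,z)+T(z,y,x)=0$ with the twists landing in the stated positions (i.e. that $\beta^2$ always stays on the first slot, $\beta\alpha$ on the second, $\alpha^2$ on the third after the substitution), and that in chaining $(L)$, the outer skew-symmetry, and $(R)$ the arguments match up without an extra unwanted application of $\alpha$ or $\beta$. This is exactly where $\alpha\beta=\beta\alpha$, $\alpha\mu=\mu\alpha^{\otimes2}$, $\beta\mu=\mu\beta^{\otimes2}$ get used: they guarantee that pushing $\alpha$ or $\beta$ through $as_{\alpha,\beta}$ is harmless, so the symbol $T$ behaves like a plain trilinear map under permutations of its three formal arguments. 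Once that is verified, the three identities drop out mechanically, so I would present $(L)$, $(R)$, the $z=x$ specialization, the polarization, and the $(13)=(12)\cdot(23)\cdot(12)$ sign count, in that order.
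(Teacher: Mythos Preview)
Your overall strategy --- reduce to the abbreviated form $T(x,y,z)=as_{\alpha,\beta}(\beta^2(x),\beta\alpha(y),\alpha^2(z))$ and then argue with transpositions --- is sound and is exactly what the paper does. But there is a genuine slip in your derivation of the third identity. You write: ``set $z=x$ in $(L)$ to get $2T(x,y,x)=0$.'' That is not what $(L)$ gives. With $z=x$, $(L)$ reads
\[
T(x,y,x)+T(y,x,x)=0,
\]
not $2T(x,y,x)=0$; the two summands in $(L)$ differ in their first two arguments, so specializing the third argument cannot make them coincide. Consequently your polarization step, which was supposed to produce the outer skew-symmetry $T(x,y,z)=-T(z,y,x)$ from $T(x,y,x)=0$, has no foundation at that point in the argument.

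The fix is to reverse the order of deductions, which is what the paper does: chain $(R)$, then $(L)$, then $(R)$ directly,
\[
T(x,y,z)\;\overset{(R)}{=}\;-T(x,z,y)\;\overset{(L)}{=}\;T(z,x,y)\;\overset{(R)}{=}\;-T(z,y,x),
\]
so the first identity and the second (the middle equality) come out in one line, and then $T(x,y,x)=0$ follows by setting $z=x$ in the first identity. Alternatively, if you want to keep your order, first use $(R)$ with $z=y$ to get $T(y,x,x)=0$, then your $(L)$-specialization above yields $T(x,y,x)=0$, and you can proceed with the polarization. Either way, the step as you wrote it does not stand on its own.
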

\begin{proof} Since $(A,\mu,\alpha,\beta)$ is a  BiHom-alternative algebra. Then, for all $x,y,z$ in $A$, we have
\begin{eqnarray*}
  as_{\alpha,\beta}(\beta^2(x),\beta\alpha(y),\alpha^2(z)) &=&-as_{\alpha,\beta}(\beta^2(x),\beta\alpha(z),\alpha^2(y))\\
  &=& as_{\alpha,\beta}(\beta^2(z),\beta\alpha(x),\alpha^2(y))\\
   &=&-as_{\alpha,\beta}(\beta^2(z),\beta\alpha(y),\alpha^2(x)).
\end{eqnarray*}
The second and the third equalities  become  trivial.
\end{proof}

\subsection{Construction Theorems and Examples}

In this section, we provide a way to construct BiHom-alternative algebras starting from an alternative algebra and two commuting  alternative algebra morphisms. This procedure was applied to associative algebras, Lie algebras, preLie algebras, Novikov algebras in \cite{GrazianiMakhloufMeniniPanaite, GuoZhangWang}.
\begin{df} Let $(A,\mu)$ be an algebra and $\alpha,\beta: A\longrightarrow A$ be two algebra morphisms such that $\alpha\beta=\beta\alpha$. Define the BiHom-algebra induced by $\alpha$ and $\beta$ as
$$A_{\alpha,\beta}=(A,\mu_{\alpha,\beta}=  \mu(\alpha\otimes\beta),\alpha,\beta).$$
\end{df}
\begin{thm}\label{induced A}
Let $(A,\mu)$ be a left alternative algebra $($resp. right alternative algebra$)$ and $\alpha,\beta: A
\longrightarrow A$ be two left alternative algebra morphisms $($resp. right alternative algebra morphisms$)$ such that $\alpha\beta=\beta\alpha$. Then $A_{\alpha,\beta}$ is a left BiHom-alternative algebra $($resp. right BiHom-alternative algebra$)$.\\
Moreover, suppose that $(A',\mu')$ is an other left alternative algebra $($resp. right alternative algebra$)$ and $\alpha',\beta': A' \longrightarrow A'$ be a two commuting left alternative algebra morphisms (resp. right alternative algebra morphisms). If $f:A \longrightarrow A'$ is a left alternative algebra morphism (resp. right alternative algebra morphism) that satisfies $f \alpha= \alpha^{'}  f$ and $f \beta= \beta' f$, then
$$ f:A_{\alpha,\beta}\longrightarrow A'_{\alpha',\beta'}$$
is a  left BiHom-alternative algebra (resp. right BiHom-alternative algebra) morphism.
\end{thm}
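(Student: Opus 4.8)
The plan is to reduce everything to the defining identity of an ordinary (left or right) alternative algebra, by first expressing the BiHom-associator of the twisted product $\mu_{\alpha,\beta}=\mu(\alpha\otimes\beta)$ in terms of the ordinary associator of $\mu$. First I would check that $A_{\alpha,\beta}$ is a genuine BiHom-algebra in the required sense, i.e.\ that $\alpha\beta=\beta\alpha$, $\alpha\mu_{\alpha,\beta}=\mu_{\alpha,\beta}\alpha^{\otimes2}$ and $\beta\mu_{\alpha,\beta}=\mu_{\alpha,\beta}\beta^{\otimes2}$. The first is an assumption; the other two follow from the facts that $\alpha,\beta$ are algebra morphisms for $\mu$ and commute, e.g.\ $\alpha\mu_{\alpha,\beta}(x,y)=\alpha\mu(\alpha(x),\beta(y))=\mu(\alpha^2(x),\alpha\beta(y))=\mu(\alpha^2(x),\beta\alpha(y))=\mu_{\alpha,\beta}(\alpha(x),\alpha(y))$.

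The key computation is the following. Using $\alpha\mu=\mu\alpha^{\otimes2}$, $\beta\mu=\mu\beta^{\otimes2}$ and $\alpha\beta=\beta\alpha$, a direct expansion of $as_{\alpha,\beta}=\mu_{\alpha,\beta}(\mu_{\alpha,\beta}\otimes\beta-\alpha\otimes\mu_{\alpha,\beta})$ gives
$$as_{\alpha,\beta}(x,y,z)=\mu\big(\mu(\alpha^2(x),\alpha\beta(y)),\beta^2(z)\big)-\mu\big(\alpha^2(x),\mu(\alpha\beta(y),\beta^2(z))\big)=as\big(\alpha^2(x),\alpha\beta(y),\beta^2(z)\big),$$
where $as$ denotes the ordinary associator of $(A,\mu)$; that is, $as_{\alpha,\beta}=as\circ(\alpha^2\otimes\alpha\beta\otimes\beta^2)$.

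Next I would substitute this into the left BiHom-alternative identity $(\ref{sa1})$: $as_{\alpha,\beta}(\beta(x),\alpha(y),z)+as_{\alpha,\beta}(\beta(y),\alpha(x),z)$ equals $as(\alpha^2\beta(x),\alpha\beta\alpha(y),\beta^2(z))+as(\alpha^2\beta(y),\alpha\beta\alpha(x),\beta^2(z))$. Invoking $\alpha\beta\alpha=\alpha^2\beta$ (the commutation relation), this is $as(a,b,c)+as(b,a,c)$ with $a=\alpha^2\beta(x)$, $b=\alpha^2\beta(y)$, $c=\beta^2(z)$, which vanishes by the left alternative identity of $(A,\mu)$. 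The right alternative case is entirely symmetric, using $(\ref{sa2})$ and $\beta\alpha\beta=\alpha\beta^2$. For the functoriality statement, the check is immediate: $f\mu_{\alpha,\beta}(x,y)=f\mu(\alpha(x),\beta(y))=\mu'(f\alpha(x),f\beta(y))=\mu'(\alpha'f(x),\beta'f(y))=\mu'_{\alpha',\beta'}(f(x),f(y))$, while $f\alpha=\alpha'f$ and $f\beta=\beta'f$ are hypotheses, so $f$ is a morphism of BiHom-algebras and hence of left (resp.\ right) BiHom-alternative algebras.

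I do not expect a genuine conceptual obstacle here; the only real care required is the bookkeeping of which twisting map occupies which tensor slot, and in particular ensuring that the commutation $\alpha\beta=\beta\alpha$ is used at the right moment — it is precisely what identifies $\alpha\beta\alpha$ with $\alpha^2\beta$ (resp.\ $\beta\alpha\beta$ with $\alpha\beta^2$), so that the two summands of the twisted identity differ only by a transposition of the first (resp.\ last) two arguments and therefore cancel by ordinary alternativity.
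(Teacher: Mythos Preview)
Your proof is correct and follows essentially the same route as the paper: both reduce the BiHom-associator of $\mu_{\alpha,\beta}$ to the ordinary associator of $\mu$ and then invoke left (resp.\ right) alternativity, and both handle the morphism statement by the same one-line computation. The only minor differences are that you first isolate the clean general identity $as_{\alpha,\beta}=as\circ(\alpha^2\otimes\alpha\beta\otimes\beta^2)$ before specializing, and you explicitly verify the multiplicativity conditions $\alpha\mu_{\alpha,\beta}=\mu_{\alpha,\beta}\alpha^{\otimes2}$, $\beta\mu_{\alpha,\beta}=\mu_{\alpha,\beta}\beta^{\otimes2}$, which the paper leaves implicit.
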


\begin{proof}
We show   that $A_{\alpha,\beta}$ satisfies the left BiHom-alternative identity $(\ref{sa1})$. For simplicity, we will write $\mu(x,y)=xy$ and $\mu_{\alpha,\beta}(x,y)=x\ast y$. Then  for all $x,y,z  \in A_{\alpha,\beta}$ we have
\begin{eqnarray*}
  as_{\alpha,\beta}(\beta(x),\alpha(y),z) &=& (\beta(x)\ast \alpha\beta(y))\ast \beta(z)-\alpha\beta(x)\ast (\alpha(y)\ast z) \\
   &=& (\alpha\beta(x)\alpha\beta(y))\ast \beta(z) -\alpha\beta(x)\ast (\alpha^2(y)\alpha\beta(z))\\
   &=& (\alpha^2\beta(x)\alpha^2\beta(y)) \alpha^2\beta(z) -\alpha^2\beta(x) (\alpha^2\beta(y)\alpha^2\beta(z)) \\
   &=& \alpha^2\beta\big(as(x,y,z)\big) \\
   &=&  -\alpha^2\beta\big(as(y,x,z)\big)\\
   &=& -as_{\alpha,\beta} (\beta(y),\alpha(x),z)
\end{eqnarray*}

The second assertion follows from
$$f\mu_{\alpha,\beta}=f\mu(\alpha\otimes\beta)=\mu^{'}(f \otimes f)(\alpha\otimes\beta)=\mu^{'}(\alpha^{'}\otimes\beta^{'})(f \otimes f)=
\mu^{'}_{\alpha^{'},\beta^{'}}(f \otimes f).$$
\end{proof}

\begin{rem}
Similarly to previous theorem, one may construct a new BiHom-alternative algebra starting with a given BiHom-alternative algebra and a pair of commuting BiHom-algebra morphisms.
\end{rem}

\begin{exa}
In \cite{albert3} (p.\ 320-321) Albert constructed a five-dimensional right alternative algebra $(A,\mu)$ that is not left alternative.  In terms of a basis $\{e,u,v,w,z\}$ of $A$, its multiplication $\mu$ is given by

\[
\mu(e,e) = e,\quad \mu(e,u) = v,\quad \mu(u,e) = u,\quad
\mu(e,w) = w - z,\quad \mu(e,z) = z = \mu(z,e),
\]
where the unspecified products of  basis elements are all $0$.

Let $\gamma, \delta, \epsilon, b \in \mathbb{K}$ be arbitrary scalars such that $\delta \neq 0,1$.  Consider the linear maps $\alpha_{\gamma, \delta,\epsilon} = \alpha \colon A \to A$ and $\beta_b=\beta: A \to A$  given by\\
\begin{center}
$\alpha=\begin{pmatrix}
       1 & 0 & 0 & 0 & 0 \\
\epsilon & \delta & 0 & 0 & 0 \\
\epsilon & 0 & \delta & 0 & 0 \\
       0 & 0 & 0 & \gamma & 0 \\
      0  & 0 & 0 & 0 & \gamma \\
         \end{pmatrix}$   and
         $\beta=\begin{pmatrix}
                 1 & 0 & 0 & 0 & 0 \\
                 0 & 1 & 0 & 0 & 0 \\
                 0 & 0 & 1 & 0 & 0 \\
                 0 & 0 & 0 & b & 0 \\
                 0 & 0 & 0 & 0 & b \\
               \end{pmatrix}$
\end{center}
We claim that $\alpha$ and $\beta$ are two algebra morphisms on $A$ which commutes.  Indeed, suppose
\[
x = \lambda_1 e + \lambda_2 u + \lambda_3 v + \lambda_4 w + \lambda_5 z,\quad
y = \theta_1 e + \theta_2 u + \theta_3 v + \theta_4 w + \theta_5 z
\]
are two arbitrary elements in $A$ with all $\lambda_i, \theta_j \in \mathbb{K}$.  Then
\[
\begin{split}
\mu(x,y) &= \lambda_1\theta_1 e + \lambda_2\theta_1 u + \lambda_1\theta_2 v + \lambda_1\theta_4 w + (\lambda_1(\theta_5 - \theta_4) + \lambda_5\theta_1)z,\\
\alpha(x) &= \lambda_1 e + (\lambda_1\epsilon + \lambda_2\delta)u + (\lambda_1\epsilon + \lambda_3\delta)v + \lambda_4\gamma w + \lambda_5\gamma z,
\end{split}
\]
and similarly for $\alpha(y)$.  A quick computation then shows that
\[
\begin{split}
\alpha(\mu(x,y))
&= \lambda_1\theta_1 e + \theta_1(\lambda_1 \epsilon + \lambda_2\delta)u + \lambda_1(\theta_1\epsilon + \theta_2\delta)v\\
&+ \lambda_1\theta_4\gamma w + \gamma(\lambda_1(\theta_5 - \theta_4) + \lambda_5\theta_1)z\\
&= \mu(\alpha(x),\alpha(y)).
\end{split}
\]
using a similar calculus, we prove that $\beta(\mu(x,y))=\mu(\beta(x),\beta(y))$.

On the other hand, it is clear that $\alpha \beta=\beta \alpha$.

By Theorem  \ref{induced A}, there is a  right BiHom-alternative algebra
$
A_{\alpha,\beta} $, where the multiplication $\mu_{\alpha,\beta}$
is given by the following table:$$\begin{array}{c|ccccc}
                                  \mu_{\a,\b} & e & u & v & w & z \\
                                  \hline
                                  e & e+\v v & v & 0 & bw-bz & bz \\
                                  u & \d u & 0 & 0 & 0 & 0\\
                                  v & 0 & 0 & 0 & 0 & 0 \\
                                  w & 0 & 0 & 0 & 0 & 0 \\
                                  z & \g e  & 0 & 0 & 0 & 0
                                \end{array}$$

Now, we prove the following statements:
\begin{itemize}
  \item  [(1)] $A_{\alpha,\beta}$ is not left BiHom-alternative.
  \item [(2)] $(A,\mu_{\alpha,\beta})$ is not right alternative.
\end{itemize}
To show that $A_{\alpha,\beta}$ is not left BiHom-alternative, observe that
\begin{eqnarray*}
 as_{\alpha,\beta}\big(\beta(e),\alpha(e),u\big)  &=&as_{\alpha,\beta}\big(e,e+\epsilon u+\epsilon v, u\big)  \\
   &=& as_{\alpha,\beta}\big(e,e, u\big)+\underbrace{as_{\alpha,\beta}\big(e,\epsilon u,  u\big)}_{=0} +\underbrace{as_{\alpha,\beta}\big(e,\epsilon v, 2 u\big)}_{=0}    \\
   &=& \delta^2v \neq 0\ ( \text{since}\ \delta \neq 0).
\end{eqnarray*}
On the other hand, to see that $(A,\mu_{\alpha,\beta})$ is not right alternative, observe that
\begin{eqnarray*}
  \mu_{\alpha,\beta}(\mu_{\alpha,\beta}(u,e),e)-\mu_{\alpha,\beta}(u,\mu_{\alpha,\beta}(e,e)) &=& \mu_{\alpha,\beta}(\delta ue,e)-\mu_{\alpha,\beta}(u,e+\epsilon u) \\
   &=& (\delta^2-\delta)u
\end{eqnarray*}
which is not $0$ because $\delta \neq 0,1$.

\end{exa}

\begin{exa}
\label{ex:oct}
In this example, we describe a BiHom-alternative algebra  that is not alternative.  Recall that the octonions is an eight-dimensional alternative (but not associative) algebra $\mathbf{O}$ with basis $\{e_0,\ldots,e_7\}$ and the following multiplication table, where $\mu$ denotes the multiplication in $\mathbf{O}$.
\begin{center}
\begin{tabular}{c|c|c|c|c|c|c|c|c}
$\mu$ & $e_0$ & $e_1$ & $e_2$ & $e_3$ & $e_4$ & $e_5$ & $e_6$ & $e_7$ \\\hline
$e_0$ & $e_0$ & $e_1$ & $e_2$ & $e_3$ & $e_4$ & $e_5$ & $e_6$ & $e_7$ \\\hline
$e_1$ & $e_1$ & $-e_0$ & $e_4$ & $e_7$ & $-e_2$ & $e_6$ & $-e_5$ & $-e_3$ \\\hline
$e_2$ & $e_2$ & $-e_4$ & $-e_0$ & $e_5$ & $e_1$ & $-e_3$ & $e_7$ & $-e_6$ \\\hline
$e_3$ & $e_3$ & $-e_7$ & $-e_5$ & $-e_0$ & $e_6$ & $e_2$ & $-e_4$ & $e_1$ \\\hline
$e_4$ & $e_4$ & $e_2$ & $-e_1$ & $-e_6$ & $-e_0$ & $e_7$ & $e_3$ & $-e_5$ \\\hline
$e_5$ & $e_5$ & $-e_6$ & $e_3$ & $-e_2$ & $-e_7$ & $-e_0$ & $e_1$ & $e_4$ \\\hline
$e_6$ & $e_6$ & $e_5$ & $-e_7$ & $e_4$ & $-e_3$ & $-e_1$ & $-e_0$ & $e_2$ \\\hline
$e_7$ & $e_7$ & $e_3$ & $e_6$ & $-e_1$ & $e_5$ & $-e_4$ & $-e_2$ & $-e_0$ \\
\end{tabular}
\end{center}
The reader is referred to \cite{baez,gt,okubo,sv} for discussion about the roles of the octonions in exceptional Lie groups, projective geometry, physics, and other applications.

One can check that there is an algebra automorphism $\alpha \colon \mathbf{O} \to \mathbf{O}$ given by
\begin{equation}
\label{octaut}
\begin{split}
\alpha(e_0) = e_0,\quad \alpha(e_1) = e_5,\quad \alpha(e_2) = e_6,\quad \alpha(e_3) = e_7,\\
\alpha(e_4) = e_1,\quad \alpha(e_5) = e_2,\quad \alpha(e_6) = e_3,\quad \alpha(e_7) = e_4.
\end{split}
\end{equation}
There is a more conceptual description of this algebra automorphism on $\mathbf{O}$.  Note that $e_1$ and $e_2$ anti-commute, and $e_3$ anti-commutes with $e_1$, $e_2$, and $e_1e_2 = e_4$. Such a triple $(e_1,e_2,e_3)$ is called a \textbf{basic triple} in \cite{baez}.  Another basic triple is $(e_5,e_6,e_7)$.  Then $\alpha$ is the unique automorphism on $\mathbf{O}$ that sends the basic triple $(e_1,e_2,e_3)$ to the basic triple $(e_5,e_6,e_7)$.

On the other hand, set $\beta=id$. Then $\alpha$ and $\beta$ are two commuting morphisms on the alternative algebra.

By Theorem  \ref{induced A}, there is a multiplicative BiHom-alternative algebra
$
\mathbf{O}_{\alpha,\beta} $, where the multiplication $\mu_{\alpha,\beta}$
is given by the following table:
\begin{center}
\begin{tabular}{c|c|c|c|c|c|c|c|c}
$\mu_{\a,\b}$ & $e_0$ & $e_1$ & $e_2$ & $e_3$ & $e_4$ & $e_5$ & $e_6$ & $e_7$ \\\hline
$e_0$ & $e_0$ & $e_1$ & $e_2$ & $e_3$ & $e_4$ & $e_5$ & $e_6$ & $e_7$\\\hline
$e_1$ & $e_5$ & $-e_6$ & $e_3$ & $-e_2$ & $-e_7$ & $-e_0$ & $e_1$ & $e_4$\\\hline
$e_2$ &  $e_6$ & $e_5$ & $-e_7$ & $e_4$ & $-e_3$ & $-e_1$ & $-e_0$ & $e_2$ \\\hline
$e_3$ &$e_7$ & $e_3$ & $e_6$ & $-e_1$ & $e_5$ & $-e_4$ & $-e_2$ & $-e_0$ \\ \hline
$e_4$ & $e_1$ & $-e_0$ & $e_4$ & $e_7$ & $-e_2$ & $e_6$ & $-e_5$ & $-e_3$ \\\hline
$e_5$ & $e_2$ & $-e_4$ & $-e_0$ & $e_5$ & $e_1$ & $-e_3$ & $e_7$ & $-e_6$ \\\hline
$e_6$ &  $e_3$ & $-e_7$ & $-e_5$ & $-e_0$ & $e_6$ & $e_2$ & $-e_4$ & $e_1$ \\\hline
$e_7$ &  $e_4$ & $e_2$ & $-e_1$ & $-e_6$ & $-e_0$ & $e_7$ & $e_3$ & $-e_5$ \\
\end{tabular}
\end{center}
Note that $\mathbf{O}_{\a,\b}$ is not alternative because
$$\mu_{\a,\b}(\mu_{\a,\b}(e_1,e_1),e_0)=-e_3\neq -e_0=\mu_{\a,\b}(e_1,\mu_{\a,\b}(e_1,e_0)).$$
\end{exa}

Theorem \ref{induced A} gives a procedure to construct BiHom-alternative algebras using ordinary alternative
algebras and their algebras endomorphisms.

In the following proposition, we provide a construction of alternative algebra from BiHom-alternative algebra.  In what follows, we often write $\mu(a,b)$ as $ab$ and omit the subscript in the BiHom-associator $as_{\alpha,\beta}$  when there is no danger of confusion.

\begin{prop}
Let $(A,\mu,\alpha,\beta)$ be a regular  BiHom-alternative algebra. Define a new multiplication on $A$ by $x\star y=\mu(\alpha^{-1}(x),\beta^{-1}(y))$.
Then $(A,\star)$ is an alternative algebra.
\end{prop}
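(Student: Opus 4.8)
The goal is to show that the new product $x\star y=\mu(\alpha^{-1}(x),\beta^{-1}(y))$ makes $(A,\star)$ an alternative algebra, i.e.\ that its associator $as_\star(x,y,z)=(x\star y)\star z-x\star(y\star z)$ is alternating. The natural approach is to first express $as_\star$ in terms of the BiHom-associator $as_{\alpha,\beta}$, using the compatibility conditions $\alpha\beta=\beta\alpha$, $\alpha\mu=\mu\alpha^{\otimes2}$, $\beta\mu=\mu\beta^{\otimes2}$ (and their consequences for $\alpha^{-1},\beta^{-1}$, available since $A$ is regular), and then transport the left and right BiHom-alternative identities through this formula.

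First I would compute $(x\star y)\star z$ and $x\star(y\star z)$ directly. Writing $\mu(a,b)=ab$, one has $(x\star y)\star z=\alpha^{-1}(\alpha^{-1}(x)\beta^{-1}(y))\cdot\beta^{-1}(z)=(\alpha^{-2}(x)\alpha^{-1}\beta^{-1}(y))\cdot\beta^{-1}(z)$ and similarly $x\star(y\star z)=\alpha^{-1}(x)\cdot\beta^{-1}(\beta^{-1}(y)\beta^{-1}(z))=\alpha^{-1}(x)\cdot(\alpha^{-1}\beta^{-1}(y)\beta^{-2}(z))$. To recognize a BiHom-associator $as_{\alpha,\beta}(u,v,w)=(uv)\beta(w)-\alpha(u)(vw)$, I would set $u=\alpha^{-2}(x)$, $v=\alpha^{-1}\beta^{-1}(y)$, and choose $w$ so that $\beta(w)=\beta^{-1}(z)$, i.e.\ $w=\beta^{-2}(z)$; then $\alpha(u)=\alpha^{-1}(x)$ and $vw=\alpha^{-1}\beta^{-1}(y)\beta^{-2}(z)$, so both terms match and
$$as_\star(x,y,z)=as_{\alpha,\beta}\bigl(\alpha^{-2}(x),\,\alpha^{-1}\beta^{-1}(y),\,\beta^{-2}(z)\bigr).$$
This is the key identity, and it is the step where one must be careful with the placement of the inverse structure maps.

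Next, to get alternativity of $(A,\star)$ I need $as_\star$ to be alternating, for which it suffices (in characteristic $0$) that it vanishes when two of the three arguments coincide. For the left case, setting $x=y$ gives $as_\star(x,x,z)=as_{\alpha,\beta}(\alpha^{-2}(x),\alpha^{-1}\beta^{-1}(x),\beta^{-2}(z))$. Here I would invoke Lemma~1.1(i): writing $\alpha^{-2}(x)=\beta(p)$ and $\alpha^{-1}\beta^{-1}(x)=\alpha(q)$ forces $p=\beta^{-1}\alpha^{-2}(x)$ and $q=\alpha^{-2}\beta^{-1}(x)$; since $\alpha,\beta$ commute these satisfy $\beta(p)=\alpha^{-2}(x)$ and $\alpha(q)=\alpha^{-1}\beta^{-1}(x)$, but $p$ and $q$ need not be equal, so Lemma~1.1(i) does not apply verbatim. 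Instead I would go back to the characterization via identity~(\ref{sa1}): set $x\mapsto\alpha^{-2}\beta^{-1}(\cdot)$-type substitutions, or more cleanly use the symmetrized form, namely that $as_{\alpha,\beta}(\beta(a),\alpha(b),c)$ is alternating in $a,b$. Taking $a=b=\beta^{-1}\alpha^{-2}(x)$ and $c=\beta^{-2}(z)$ we get $\beta(a)=\alpha^{-2}(x)$, $\alpha(b)=\alpha\beta^{-1}\alpha^{-2}(x)=\alpha^{-1}\beta^{-1}(x)$, so indeed $as_\star(x,x,z)=as_{\alpha,\beta}(\beta(a),\alpha(a),c)=0$ by~(\ref{sa1}). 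The right case is symmetric, using~(\ref{sa2}): setting $y=z$ and writing the inner arguments as $\beta$- and $\alpha$-images of a common element via the same bookkeeping.

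**The main obstacle.** The computation itself is routine, so the only real subtlety is the bookkeeping with $\alpha^{-1}$ and $\beta^{-1}$ — making sure that after the substitution the two coinciding arguments $x,x$ (resp.\ $z,z$) really land in the correct slots as $\beta(\cdot)$ and $\alpha(\cdot)$ of a \emph{single} common element, so that the left/right BiHom-alternative identities~(\ref{sa1}),~(\ref{sa2}) can be applied. Once the master identity $as_\star(x,y,z)=as_{\alpha,\beta}(\alpha^{-2}(x),\alpha^{-1}\beta^{-1}(y),\beta^{-2}(z))$ is in hand and this slot-matching is checked, the conclusion that $(A,\star)$ is (left and right, hence two-sided) alternative is immediate.
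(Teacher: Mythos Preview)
Your approach is correct and matches the paper's: both derive the master identity $as_\star(x,y,z)=as_{\alpha,\beta}\bigl(\alpha^{-2}(x),\alpha^{-1}\beta^{-1}(y),\beta^{-2}(z)\bigr)$ and then invoke the BiHom-alternative identities, the only cosmetic difference being that the paper checks the linearized form $as_\star(x,y,z)+as_\star(y,x,z)=0$ while you check the diagonal $as_\star(x,x,z)=0$. One small correction to your bookkeeping: your $p=\beta^{-1}\alpha^{-2}(x)$ and $q=\alpha^{-2}\beta^{-1}(x)$ \emph{are} equal because $\alpha$ and $\beta$ (hence $\alpha^{-1}$ and $\beta^{-1}$) commute, so Lemma~1.1(i) already applies verbatim and your detour back through~(\ref{sa1}) is unnecessary.
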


\begin{proof}
We write $\mu(x,y)=xy$ and $x \star y=\alpha^{-1}(x)\beta^{-1}(y)$, for all $x, y \in A$. Hence, it needs to show
\begin{equation}\label{left}
    as(x,y,z)+as(y,x,z)=0
\end{equation}
\begin{equation}\label{right}
    as(x,y,z)+as(x,z,y)=0
\end{equation}
for all $x,y \in A$. Since $\alpha$ and $\beta$ are bijective, we have
\begin{eqnarray*}
 & & as(x,y,z)+as(y,x,z) = (x \star y)\star z-x \star (y \star z)+ (y \star x)\star z-y \star (x \star z) \\
   &=& (\alpha^{-2} (x)\beta^{-1}(y))\star z-x \star (\alpha^{-1}(y)\beta^{-1}(z))+ (\alpha^{-1}(y) \beta^{-1}(x))\star z-y \star (\alpha^{-1}(x) \beta^{-1}(z))  \\
   &=& (\alpha ^{-2}(x)(\alpha\beta)^{-1}(y))\beta^{-1}(z)-\alpha^{-1}(x) ((\alpha\beta)^{-1}(y)\beta^{-2}(z))\\&+& (\alpha^{-2}(y) (\alpha\beta)^{-1}(x))\beta^{-1}(z)-\alpha^{-1}(y)((\alpha\beta)^{-1}(x) \beta^{-2}(z))   \\
   &=& as_{\alpha,\beta}\big(\alpha^{-2}(x),(\alpha\beta)^{-1}(y),\beta^{-2}(z)\big)+as_{\alpha,\beta}(\alpha^{-2}(y),(\alpha\beta)^{-1}(x),\beta^{-2}(z)\big)\\
    &=& as_{\alpha,\beta}\big(\beta(\beta^{-1}\alpha^{-2}(x)),\alpha(\beta^{-1}\alpha^{-2}(y)),\beta^{-2}(z)\big)+as_{\alpha,\beta}\big(\beta(\beta^{-1}\alpha^{-2}(y)),\alpha(\beta^{-1}\alpha^{-2}(x)),\beta^{-2}(z)\big)=0,
\end{eqnarray*}
since $A$ is a BiHom-alternative algebra. Using a similar calculation, one can prove \ref{right}.

\end{proof}\begin{cor}

Let $(A,\mu,\alpha,\beta)$ be an involutive BiHom-alternative algebra. Define a new multiplication on $A$ by $x\star y=\mu(\alpha(x),\beta(y))$.
Then $(A,\star)$ is an alternative algebra.
\end{cor}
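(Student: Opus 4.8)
The plan is to deduce this corollary directly from the preceding Proposition rather than redoing the computation from scratch. An involutive BiHom-alternative algebra is in particular regular, since $\alpha^2=\beta^2=\mathrm{id}$ forces $\alpha$ and $\beta$ to be bijective (each is its own inverse), and they are already assumed to be algebra automorphisms satisfying $\alpha\beta=\beta\alpha$. Hence the Proposition applies, and the multiplication $x\bullet y:=\mu(\alpha^{-1}(x),\beta^{-1}(y))$ defines an alternative algebra on $A$.

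The key observation is then simply that $\alpha^{-1}=\alpha$ and $\beta^{-1}=\beta$ in the involutive case, so the product $x\bullet y=\mu(\alpha^{-1}(x),\beta^{-1}(y))$ coincides with the product $x\star y=\mu(\alpha(x),\beta(y))$ appearing in the statement. Therefore $(A,\star)=(A,\bullet)$ is an alternative algebra, which is exactly the assertion.

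I would write this out in two or three lines: first note regularity and invoke the Proposition to get that $(A,\bullet)$ with $x\bullet y=\mu(\alpha^{-1}(x),\beta^{-1}(y))$ is alternative; then substitute $\alpha^{-1}=\alpha$, $\beta^{-1}=\beta$ to identify $\bullet$ with $\star$; conclude. There is essentially no obstacle here — the only thing to be careful about is confirming that "involutive'' in the paper's Definition already includes the hypothesis that $\alpha,\beta$ are algebra morphisms (it does, since the BiHom-algebra axioms $\alpha\mu=\mu\alpha^{\otimes2}$, $\beta\mu=\mu\beta^{\otimes2}$ are part of being a BiHom-alternative algebra), so that "regular'' indeed follows. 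One could alternatively give a self-contained proof mirroring the Proposition's computation verbatim with $\alpha^{-1}$, $\beta^{-1}$ replaced by $\alpha$, $\beta$, but reducing to the Proposition is cleaner and avoids repetition.
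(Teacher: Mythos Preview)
Your proposal is correct and matches the paper's intended argument: the Corollary is stated immediately after the Proposition with no separate proof, so it is meant to follow directly by observing that an involutive BiHom-alternative algebra is regular (since $\alpha^2=\beta^2=\mathrm{id}$ makes $\alpha,\beta$ bijective, and multiplicativity is already part of the BiHom-alternative axioms), whence $\alpha^{-1}=\alpha$, $\beta^{-1}=\beta$ and the two products coincide. There is nothing to add.
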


\section{BiHom-Malcev  algebras}
\label{sec:BiHommaltsev}

In this section we define BiHom-Malcev  algebras and study their general properties.  Other characterizations of the BiHom-Malcev  identity are given.  We prove also a construction result for BiHom-Malcev  algebras.  In addition, we show that every regular BiHom-alternative algebra gives rise to a BiHom-Malcev  algebra via the BiHom-commutator bracket.  This means that regular BiHom-alternative algebras are all BiHom-Malcev  admissible algebras, generalizing the well-known fact that Hom-alternative algebras (resp. alternative algebras) are Hom-Malcev  admissible (resp. Malcev  admissible algebras).  Additional properties of BiHom-alternative algebras are shown.

Since BiHom-Malcev  algebras generalize BiHom-Lie algebras (as we will see shortly), which in turn generalize Lie algebras, we use the bracket notation $[\cdot,\cdot]$ to denote their multiplications.

\begin{df}\cite{GrazianiMakhloufMeniniPanaite}
A  BiHom-Lie algebra is a $4$-tuple $(A, [.,.],\alpha,\beta)$ where $\alpha,\beta: A\rightarrow A$ are linear maps and $[.,.]: A\times A\rightarrow A$ is a bilinear map satisfying, for all $x,y,z \in A$, the following conditions:
\begin{itemize}
  \item [(i)] $\alpha  \beta=\beta   \alpha,$
  \item [(ii)] $\alpha([x,y])=[\alpha(x),\alpha(y)]$, \ $\beta([x,y])=[\beta(x),\beta(y)]],$
  \item [(iii)] $[\beta(x),\alpha(y)]=-[\beta(y),\alpha(x)]$ (BiHom-skewsymmetry),
  \item [(iv)] $J_{\alpha,\beta}(x,y,z)=0$  (BiHom-Jacobi identity).
\end{itemize}

\end{df}

 Now, we introduce the notion of BiHom-Malcev  algebras which is a generalization of both Malcev  and Hom-Malcev  algebras.

\begin{df}
A \textbf{BiHom-Malcev  algebra} is a $4$-tuple $(A,[.,.],\alpha,\beta)$ where $\alpha,\beta: A\rightarrow A$ are linear maps and $[.,.]: A\otimes A\rightarrow A$ is a bilinear map satisfying, for all $x,y,z \in A$, the following conditions:
\begin{itemize}
  \item [(i)] $\alpha  \beta=\beta   \alpha$,
  \item [(ii)] $\alpha([x,y])=[\alpha(x),\alpha(y)]$, \ $\beta([x,y])=[\beta(x),\beta(y)]$,
  \item [(iii)] $[\beta(x),\alpha(y)]=-[\beta(y),\alpha(x)]$ (BiHom-skewsymmetry),
  \item [(iv)] $J_{\alpha,\beta}(\alpha\beta(x),\alpha\beta(y),[\beta(x),\alpha(z)])=[J_{\alpha,\beta}(\beta(x),\beta(y),\beta(z)),\alpha^2\beta^2(x)]$ (BiHom-Malcev  identity).
\end{itemize}
\end{df}

Observe that when $\alpha =\beta= id$, by the skew-symmetry of $[\cdot,\cdot]$, the BiHom-Malcev  identity reduces to the Malcev  identity $J(x,y,[x,z])=[J(x,y,z),x]$  or equivalently,
\begin{equation}
\label{maltsevidentity}
[[x,y],[x,z]] = [[[x,y],z],x] + [[[y,z],x],x] + [[[z,x],x],y]
\end{equation}
for all $x,y,z \in A$.

\begin{thm}\label{BiHom-Malcev  construct}
Let $(A,[.,.])$ be a Malcev  algebra and $\alpha, \beta :A\rightarrow A$ be two algebra morphisms such that: $\alpha  \beta=\beta  \alpha$. Then $(A,[.,.]',\alpha,\beta)$ is a BiHom-Malcev  algebra, where $[.,.]':A\times A \rightarrow A $ is defined by $[x,y]'=[\alpha(x),\beta(y)],\ x,y \in A$.
\end{thm}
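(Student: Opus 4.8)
The plan is to verify the four BiHom-Malcev axioms for the quadruple $(A,[.,.]',\alpha,\beta)$, where $[x,y]'=[\alpha(x),\beta(y)]$, one at a time, reducing everything to the corresponding identities for the original Malcev algebra $(A,[.,.])$ by pushing the two commuting morphisms $\alpha,\beta$ outside the brackets. Axiom (i) is immediate from the hypothesis $\alpha\beta=\beta\alpha$. For axiom (ii), I would compute $\alpha([x,y]')=\alpha([\alpha(x),\beta(y)])=[\alpha^2(x),\alpha\beta(y)]=[\alpha(\alpha(x)),\beta(\alpha(y))]=[\alpha(x),\alpha(y)]'$, using that $\alpha$ is an algebra morphism and $\alpha\beta=\beta\alpha$; the computation for $\beta$ is symmetric. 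For axiom (iii), BiHom-skewsymmetry, I would use the ordinary skew-symmetry of $[.,.]$: $[\beta(x),\alpha(y)]'=[\alpha\beta(x),\beta\alpha(y)]=-[\alpha\beta(y),\beta\alpha(x)]=-[\beta(y),\alpha(x)]'$, again invoking $\alpha\beta=\beta\alpha$.

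The substantive step is axiom (iv), the BiHom-Malcev identity. First I would establish a clean formula for the BiHom-Jacobiator $J_{\alpha,\beta}$ of $[.,.]'$ in terms of the ordinary Jacobiator $J$ of $[.,.]$. Unwinding the definition $J_{\alpha,\beta}(x,y,z)=\circlearrowleft_{x,y,z}[\beta^2(x),[\beta(y),\alpha(z)]']'$ and repeatedly moving $\alpha,\beta$ out of the brackets (using that both are morphisms and commute), one should find that $J_{\alpha,\beta}^{[.,.]'}(x,y,z)$ equals $\alpha^{k}\beta^{\ell}$ applied to $J(x,y,z)$ for suitable exponents $k,\ell$ — I expect $k=\ell=2$ after bookkeeping, so $J_{\alpha,\beta}^{[.,.]'}(x,y,z)=\alpha^2\beta^2\big(J(x,y,z)\big)$. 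Granting this, both sides of the BiHom-Malcev identity become $\alpha^{p}\beta^{q}$ applied to an expression in the original bracket; the left side turns into $\alpha^p\beta^q$ of $J(x,y,[x,z])$ (up to the extra twists coming from the arguments $\alpha\beta(x),\alpha\beta(y),[\beta(x),\alpha(z)]$ and from the inner bracket), and the right side into $\alpha^p\beta^q$ of $[J(x,y,z),x]$. The ordinary Malcev identity $J(x,y,[x,z])=[J(x,y,z),x]$ then finishes the proof, provided the twist exponents on the two sides match.

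The main obstacle is precisely this exponent bookkeeping: one must check that, after expanding both sides of axiom (iv) for $[.,.]'$, the arguments receive exactly matching powers of $\alpha$ and $\beta$ so that the identity collapses to the untwisted Malcev identity with a common prefactor $\alpha^p\beta^q$. This is why the BiHom-Malcev identity is stated with the particular twists $J_{\alpha,\beta}(\alpha\beta(x),\alpha\beta(y),[\beta(x),\alpha(z)])$ on one side and $[J_{\alpha,\beta}(\beta(x),\beta(y),\beta(z)),\alpha^2\beta^2(x)]$ on the other — these are calibrated so the counts agree. I would carry out the expansion carefully once for the left side and once for the right side, track the total degree in $\alpha$ and in $\beta$ separately, confirm they coincide, and then appeal to $(\ref{maltsevidentity})$. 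The second statement of the theorem, concerning morphisms, is not asserted here, so nothing further is needed.
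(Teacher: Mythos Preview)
Your proposal is correct and follows exactly the paper's route: verify (i)--(iii) directly, then reduce (iv) to the ordinary Malcev identity by pushing the commuting morphisms $\alpha,\beta$ out of the nested brackets. The only slip is your tentative exponent guess --- one actually finds $J_{\alpha,\beta}^{[\cdot,\cdot]'}(a,b,c)=\alpha\beta^{2}\big(J(a,b,c)\big)$ (not $\alpha^{2}\beta^{2}$), and after substituting the prescribed arguments both sides of (iv) become $\alpha^{2}\beta^{3}$ applied to the two sides of $J(x,y,[x,z])=[J(x,y,z),x]$, exactly as in the paper's computation.
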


\begin{proof}
First, we check that the bracket product $[.,.]'$  is compatible with the structure maps $\alpha$ and $\beta$. For any $x,y \in A$, we have
$$[\alpha(x),\alpha(y)]'=[\alpha^2(x),\alpha\beta(y)]=\alpha\Big([\alpha(x),\beta(y)]\Big)=\alpha\big([x,y]'\big)$$
Similarly, one can check  that $[\beta(x),\beta(y)]'=\beta\big([x,y]'\big)$.

We verify now the BiHom-skewsymmetry. Let $x, y \in A$, then we have
$$[\beta(x),\alpha(y)]'=[\alpha\beta(x),\alpha\beta(y)]=-[\alpha\beta(y),\alpha\beta(x)]=-[\beta(y),\alpha(x)]'.$$
It remains to show the BiHom-Malcev  identity. Let $x,y,z \in A$. Then we have
\begin{eqnarray*}
  J_{\alpha,\beta}(\alpha\beta(x),\alpha\beta(y),[\beta(x),\alpha(z)]') &=&  J_{\alpha,\beta}(\alpha\beta(x),\alpha\beta(y),[\alpha\beta(x),\alpha\beta(z)])  \\
  &=&\alpha\beta\Big(\circlearrowleft_{x,y,[x,z]}[\beta^2(x),[\beta(y)[\alpha(x),\alpha(z)]]']'\Big)\\
  &=&\alpha^2\beta^3\Big(\circlearrowleft_{x,y,[x,z]}[x,[y,[x,z]]]\Big)\\
  &=&\alpha^2\beta^3\Big(J(x,y,[x,z])\Big).
\end{eqnarray*}
On the other hand, we have
\begin{eqnarray*}
   \Big[J_{\alpha,\beta}(\beta(x),\beta(y),\beta(z)),\alpha^2\beta^2(x)\Big]'&=&\Big[\circlearrowleft_{x,y,z}\big[[\alpha^2\beta^2(x),\alpha^2\beta^2(y)],\alpha^2\beta^2(z)\big],\alpha^2\beta^2(x)\Big]'  \\
   &=&\Big[\circlearrowleft_{x,y,z}\big[[\alpha^3\beta^2(x),\alpha^3\beta^2(y)],\alpha^3\beta^2(z)\big],\alpha^2\beta^3(x)\Big] \\
   &=&\alpha^2\beta^3\Big([J(x,y,z),x]\Big) =  \alpha^2\beta^3\Big(J(x,y,[x,z])\Big)\\
   &=&J_{\alpha,\beta}(\alpha\beta(x),\alpha\beta(y),[\beta(x),\alpha(z)]').
\end{eqnarray*}
This shows that the BiHom-Malcev  identity holds.
\end{proof}

\begin{rem}
Similarly to previous theorem, one may construct a new BiHom-Malcev algebra starting with a given BiHom-Malcev algebra and a pair of commuting BiHom-algebra morphisms.
\end{rem}

We now discuss same examples of BiHom-Malcev  algebras that can be constructed using Theorem \ref{BiHom-Malcev  construct}.
\begin{exa}\cite{sagle} (Example 3.1)
There is a four-dimensional non-Lie Malcev  algebra $(A,[\cdot,\cdot])$ with basis $\{e_1,e_2,e_3,e_4\}$ and multiplication table:
\begin{center}
\begin{tabular}{c|cccc}
$[\cdot,\cdot]$ & $e_1$ & $e_2$ & $e_3$ & $e_4$ \\ \hline
$e_1$ & $0$ & $-e_2$ & $-e_3$ & $e_4$ \\
$e_2$ & $e_2$ & $0$ & $2e_4$ & $0$ \\
$e_3$ & $e_3$ & $-2e_4$ & $0$ & $0$ \\
$e_4$ & $-e_4$ & $0$ & $0$ & $0$ \\
\end{tabular}
\end{center}
We apply Theorem \ref{BiHom-Malcev  construct} to this Malcev  algebra,  Using suitable algebra morphisms.

One can check that one class of algebra morphisms $\alpha:  A \to A$ is given by
\[
\begin{split}
&\alpha(e_1) = e_1 + d_1d_2e_3 + d_1d_2^2e_4,\;
\alpha(e_2) = e_2 +d_1e_3 + d_1d_2e_4,\\
&\alpha(e_3) = e_3, \;
\alpha(e_4) = e_4,
\end{split}
\]

The second class of algebra morphisms $\beta: A \to A$ is given by
\[
\begin{split}
\beta(e_1) = -e_1 + d_2e_2 + d_3e_3 + d_4e_4,\;
\beta(e_2) = d_5e_4,\;
\beta(e_3) = 0 = \beta(e_4),
\end{split}
\]
where $d_1, d_2,d_3,d_4,d_5$ are arbitrary scalars in $\mathbb{K}$.

It is easy to check that the morphisms $\alpha$ and $\beta$ commutes.  Hence, using Theorem \ref{BiHom-Malcev  construct}, one can conclude that the algebra  $(A,[.,.]',\alpha,\beta)$ is a BiHom-Malcev  algebra, where $[.,.]':A\times A \rightarrow A $ is defined by $[x,y]'=[\alpha(x),\beta(y)],\ x,y \in A$ and its multiplication table is given by
\begin{center}
\begin{tabular}{c|cccc}
$[\cdot,\cdot]'$ & $e_1$ & $e_2$ & $e_3$ & $e_4$ \\ \hline
$e_1$ & $-d_2e_2-(d_1d_2+d_3)e_3+(d_4-2d_2+d_1d_2^2)e_4$ & $be_4$ & $0$ & $0$ \\
$e_2$ & $-e_2-d_1e_3+(d_1d_2+2d_3-2d_2)e_4$ & $0$ & $0$ & $0$ \\
$e_3$ & $-e_3-2d_2e_4$ & $0$ & $0$ & $0$ \\
$e_4$ & $e_4$ & $0$ & $0$ & $0$ \\
\end{tabular}
\end{center}

\end{exa}

\begin{exa}\cite{sagle} (Example 3.4)
There is a five-dimensional non-Lie Malcev  algebra $(A,[\cdot,\cdot])$ with basis $\{e_1,e_2,e_3,e_4,e_5\}$ and multiplication table:
\begin{center}
\begin{tabular}{c|ccccc}
$[\cdot,\cdot]$ & $e_1$ & $e_2$ & $e_3$ & $e_4$ & $e_5$ \\ \hline
$e_1$ & $0$ & $0$ & $0$ & $e_2$ & $0$ \\
$e_2$ & $0$ & $0$ & $0$ & $0$ & $e_3$ \\
$e_3$ & $0$ & $0$ & $0$ & $0$ & $0$ \\
$e_4$ & $-e_2$ & $0$ & $0$ & $0$ & $0$ \\
$e_5$ & $0$ & $-e_3$ & $0$ & $0$ & $0$
\end{tabular}
\end{center}
Let the two algebra morphisms $\alpha$ and $\beta$ defined on $A$ by
\[
\begin{split}
\alpha(e_1) &= a_1e_1 + a_2e_2 + a_3e_3 + a_4e_4 + a_5e_5,\\
\alpha(e_2) &= (a_1b_4 - a_4b_1)e_2 + (a_2b_5 - a_5b_2)e_3,\\
\alpha(e_3) &= (a_1b_4 - a_4b_1)c_5e_3,\\
\alpha(e_4) &= b_1e_1 + b_2e_2 + b_3e_3 + b_4e_4 + b_5e_5,\\
\alpha(e_5) &= c_1e_1 + c_2e_2 + c_3e_3 + c_4e_4 + c_5e_5,
\end{split}
\]
with $a_i, b_j, c_k \in \mathbb{K}$, such that
\[
\begin{split}
a_5(a_4b_1 - a_1b_4) &= 0 = b_5(a_4b_1 - a_1b_4),\\
a_1c_4 = a_4c_1,\quad a_2c_5 &= a_5c_2,\quad b_1c_4 = b_4c_1,\quad b_2c_5 = b_5c_2.
\end{split}
\]
and $\beta=id$. Then,
 using Theorem \ref{BiHom-Malcev  construct}, one can conclude that the algebra  $(A,[.,.]',\alpha,\beta)$ is a BiHom-Malcev  algebra, where $[.,.]':A\times A \rightarrow A $ is defined by $[x,y]'=[\alpha(x),\beta(y)],\ x,y \in A$. Its multiplication table is given by:
\begin{center}
\begin{tabular}{c|ccccc}
$[\cdot,\cdot]'$ & $e_1$ & $e_2$ & $e_3$ & $e_4$ & $e_5$ \\ \hline
$e_1$ & $-a_4e_2$ & $-a_5e_3$ & $0$ & $a_1e_2$ & $a_2e_3$ \\
$e_2$ & $0$ & $0$ & $0$ & $0$ & $(a_1b_4-a_4b_1)e_3$ \\
$e_3$ & $0$ & $0$ & $0$ & $0$ & $0$ \\
$e_4$ & $-b_4e_2$ & $-b_5e_3$ & $0$ & $b_1e_2$ & $b_2e_3$ \\
$e_5$ & $-c_4e_2$ & $-b_5e_3$ & $0$ & $c_1e_2$ & $c_2e_3$
\end{tabular}
\end{center}

\end{exa}

\begin{df}
Let $(A,\mu,\alpha,\beta)$ be a regular BiHom-algebra. Define the commutator BiHom-algebra as the BiHom-algebra
$$A^-=(A,[\cdot,\cdot]=\mu\circ(id-(\alpha^{-1}\beta\otimes\alpha\beta^{-1})\circ\tau),\alpha,\beta)$$
where $\tau(x,y)=(y,x)$. That is, for all $x,y \in A$, we have
$$[x,y]=\mu(x,y)-\mu(\alpha^{-1}\beta(y),\alpha\beta^{-1}(x))$$
\end{df}

We call a BiHom-algebra $A$, BiHom-Malcev-admissible if $A^-$ is a BiHom-Malcev  algebra.
\begin{exa}
\label{ex:malad}
A Malcev-admissible algebra is defined as an algebra $(A,\mu)$ for which the commutator algebra $A^- = (A,[\cdot,\cdot]=\mu\circ(id - \tau))$ is a Malcev  algebra, i.e., $A^-$ satisfies the Malcev  identity.  identifying algebras as BiHom-algebras with identity twisting maps, a Malcev-admissible algebra is equivalent to a BiHom-Malcev-admissible algebra with $\alpha =\beta= id$.
\end{exa}

Next we consider the relationship between the BiHom-associator  in a BiHom-algebra $A$ and the BiHom-Jacobiator  in its commutator BiHom-algebra $A^-$.

\begin{lem}
\label{lem:hahj}
Let $(A,\mu,\alpha,\beta)$ be a regular BiHom-algebra.  Then, in the BiHom-algebra $A^-$, we have
$$J_{\alpha,\beta} = as_{\alpha,\beta}\circ(\alpha^{-1}\beta^2\otimes \beta \otimes \alpha) \circ (id + \xi + \xi^2) \circ (id - \delta),$$
where $\xi(x \otimes y \otimes z) = z \otimes x \otimes y$ and $\delta(x \otimes y \otimes z) = x \otimes z \otimes y$.
\end{lem}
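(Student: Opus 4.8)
The plan is to expand both sides of the asserted identity into sums of $\mu$-monomials and check that the two sums agree; the only ingredients are the structural relations $\alpha\beta=\beta\alpha$, $\alpha\mu=\mu\alpha^{\otimes2}$, $\beta\mu=\mu\beta^{\otimes2}$ and the invertibility of $\alpha,\beta$.

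First I would unwind the left-hand side. By definition, in $A^-$ we have $J_{\alpha,\beta}(x,y,z)=\circlearrowleft_{x,y,z}[\beta^2(x),[\beta(y),\alpha(z)]]$, where $[u,v]=\mu(u,v)-\mu(\alpha^{-1}\beta(v),\alpha\beta^{-1}(u))$. Using $\alpha\beta=\beta\alpha$ and bijectivity one first checks the ``normalized'' form of the inner bracket, $[\beta(y),\alpha(z)]=\mu(\beta(y),\alpha(z))-\mu(\beta(z),\alpha(y))$, and then, writing $w=[\beta(y),\alpha(z)]$, that $[\beta^2(x),w]=\mu(\beta^2(x),w)-\mu(\alpha^{-1}\beta(w),\alpha\beta(x))$; pushing $\beta$ through $\mu$ by multiplicativity, this becomes a signed sum of four $\mu$-monomials --- two of \emph{right-associated} type, $\mu(\beta^2(x),\mu(\beta(y),\alpha(z)))$ and $\mu(\beta^2(x),\mu(\beta(z),\alpha(y)))$, and two of \emph{left-associated} type, $\mu(\mu(\alpha^{-1}\beta^2(y),\beta(z)),\alpha\beta(x))$ and $\mu(\mu(\alpha^{-1}\beta^2(z),\beta(y)),\alpha\beta(x))$. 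Adding the three cyclic rotations of $(x,y,z)$ then presents $J_{\alpha,\beta}$ as a signed sum of twelve $\mu$-monomials, six of right-associated shape $\mu(\beta^2(a),\mu(\beta(b),\alpha(c)))$ and six of left-associated shape $\mu(\mu(\alpha^{-1}\beta^2(a),\beta(b)),\alpha\beta(c))$, indexed by the six orderings $(a,b,c)$ of $\{x,y,z\}$.

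Next I would expand the right-hand side. Observe that $(\mathrm{id}+\xi+\xi^2)\circ(\mathrm{id}-\delta)$ is precisely the antisymmetrizer $\sum_{\sigma\in S_3}\mathrm{sgn}(\sigma)\,\sigma$ on $A^{\otimes3}$: the three powers of the $3$-cycle $\xi$ give the even permutations (with sign $+1$) and composing them with the transposition $\delta$ gives the odd ones (with sign $-1$). Hence, after applying the twist $\alpha^{-1}\beta^2\otimes\beta\otimes\alpha$ and then $as_{\alpha,\beta}$, the right-hand side equals $\sum_{\sigma\in S_3}\mathrm{sgn}(\sigma)\,as_{\alpha,\beta}\big(\alpha^{-1}\beta^2(\sigma_1),\beta(\sigma_2),\alpha(\sigma_3)\big)$, where $(\sigma_1,\sigma_2,\sigma_3)$ runs over the orderings of $(x,y,z)$. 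Expanding each associator by $as_{\alpha,\beta}(p,q,r)=\mu(\mu(p,q),\beta(r))-\mu(\alpha(p),\mu(q,r))$ and absorbing the outer $\alpha$ in the second summand, since $\mu(\alpha\alpha^{-1}\beta^2(a),\mu(\beta(b),\alpha(c)))=\mu(\beta^2(a),\mu(\beta(b),\alpha(c)))$, one again obtains twelve $\mu$-monomials of exactly the two shapes above, indexed by the six orderings of $\{x,y,z\}$, with signs prescribed by $\mathrm{sgn}(\sigma)$ (and using $\beta\alpha=\alpha\beta$ to bring the left-associated ones into the form $\mu(\mu(\alpha^{-1}\beta^2(a),\beta(b)),\alpha\beta(c))$).

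The last step is the term-by-term comparison: match the right-associated monomial of $J_{\alpha,\beta}$ attached to an ordering $(a,b,c)$ with the one coming from $-\mu(\alpha(p),\mu(q,r))$ in the corresponding associator, and the left-associated monomial with the one coming from $\mu(\mu(p,q),\beta(r))$, checking that the attached signs match. I expect the main difficulty to be entirely bookkeeping: correctly tracking the twisted arguments $\alpha^{-1}\beta^2(\cdot)$, $\beta(\cdot)$, $\alpha(\cdot)$ through the double application of the commutator bracket, and invoking $\alpha\mu=\mu\alpha^{\otimes2}$, $\beta\mu=\mu\beta^{\otimes2}$, $\alpha\beta=\beta\alpha$ at precisely the right places so that the two twelve-term tallies coincide sign for sign.
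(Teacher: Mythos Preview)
Your proposal is correct and follows essentially the same route as the paper: a direct expansion of $J_{\alpha,\beta}(x,y,z)$ via the commutator bracket into twelve $\mu$-monomials, matched against the expansion of the right-hand side. Your observation that $(\mathrm{id}+\xi+\xi^2)\circ(\mathrm{id}-\delta)$ is the full antisymmetrizer $\sum_{\sigma\in S_3}\mathrm{sgn}(\sigma)\,\sigma$ is a clean way to organize the right-hand side that the paper does not make explicit (it simply lists the twelve terms of the Jacobiator and declares the identity), but the underlying computation is the same.
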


\begin{proof}
For $x,y,z \in A$, we have:
\begin{eqnarray*}
 J_{\alpha,\beta}(x,y,z)  &=&[\beta^2(x),[\beta(y),\alpha(z)]]+[\beta^2(z),[\beta(x),\alpha(y)]]+[\beta^2(y),[\beta(z),\alpha(x)]] \\
   &=&\beta^2(x)(\beta(y)\alpha(z))-(\alpha^{-1}\beta^2(y)\beta(z))\alpha\beta(x)-\beta^2(x)(\beta(z)\alpha(y))\\
   &+&(\alpha^{-1}\beta^2(z)\beta(y))\alpha\beta(x) +\beta^2(z)(\beta(x)\alpha(y))-(\alpha^{-1}\beta^2(x)\beta(y))\alpha\beta(z)   \\
   &-& \beta^2(z)(\beta(y)\alpha(x))+(\alpha^{-1}\beta^2(y)\beta(x))\alpha\beta(z) +  \beta^2(y)(\beta(z)\alpha(x))  \\
   &-&(\alpha^{-1}\beta^2(z)\beta(x))\alpha\beta(y)-\beta^2(y)(\beta(x)\alpha(z))+(\alpha^{-1}\beta^2(x)\beta(z))\alpha\beta(y)\\
   &=& as_{\alpha,\beta}\circ(\alpha^{-1}\beta^2\otimes \beta \otimes \alpha) \circ (id + \xi + \xi^2) \circ (id - \delta)(x,y,z)
\end{eqnarray*}

\end{proof}

\begin{prop}
Let $(A,\mu,\alpha,\beta)$ be a regular BiHom-alternative algebra and let  $A^-$ be its commutator BiHom-algebra. Then
$$J_{\alpha,\beta}(x,y,z)=6as_{\alpha,\beta}(\alpha^{-1}\beta^2(x),\beta(y),\alpha(z)),\ \textrm{for all}\ x, y,z \in A$$
 where the BiHom-associator is taken in $A$ and the BiHom-Jacobiator is considered in $A^-$.
\end{prop}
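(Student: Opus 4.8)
The plan is to combine Lemma \ref{lem:hahj}, which expresses the BiHom-Jacobiator of $A^-$ through the BiHom-associator of $A$, with the left and right BiHom-alternative identities of $A$ (in the symmetrized form of Lemma \ref{propriet ass BiH-A}), in order to collapse the six terms produced by $(id+\xi+\xi^2)\circ(id-\delta)$ into six copies of a single term.

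First I would write out, for fixed $x,y,z\in A$, the six trilinear arguments generated by applying $(id+\xi+\xi^2)\circ(id-\delta)$ to $x\otimes y\otimes z$, namely (up to the outer twist $\alpha^{-1}\beta^2\otimes\beta\otimes\alpha$) the tuples coming from $(x,y,z)$, $(z,x,y)$, $(y,z,x)$ with a plus sign and from $(x,z,y)$, $(y,x,z)$, $(z,y,x)$ with a minus sign. Feeding each through $as_{\alpha,\beta}\circ(\alpha^{-1}\beta^2\otimes\beta\otimes\alpha)$ gives six BiHom-associators of the shape $as_{\alpha,\beta}(\alpha^{-1}\beta^2(\cdot),\beta(\cdot),\alpha(\cdot))$. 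To bring them into a common normalized form I would rewrite each argument so that the three slots carry the weights $\beta^2$, $\beta\alpha$, $\alpha^2$ respectively — this is exactly the normalization in which Lemma \ref{propriet ass BiH-A} applies — using that $\alpha,\beta$ are commuting automorphisms and that $as_{\alpha,\beta}$ is compatible with $\alpha,\beta$ (since $\alpha\mu=\mu\alpha^{\otimes2}$, $\beta\mu=\mu\beta^{\otimes2}$).

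Then I would invoke the two antisymmetry relations from Lemma \ref{propriet ass BiH-A}: in normalized form, $as_{\alpha,\beta}(\beta^2(a),\beta\alpha(b),\alpha^2(c))=-as_{\alpha,\beta}(\beta^2(c),\beta\alpha(b),\alpha^2(a))$ (antisymmetry in the outer pair) and $as_{\alpha,\beta}(\beta^2(a),\beta\alpha(b),\alpha^2(c))=as_{\alpha,\beta}(\beta^2(c),\beta\alpha(a),\alpha^2(b))$ (the $3$-cycle relation). These two relations let one move freely among the six permutations of $(a,b,c)$, each permutation contributing $\pm$ the same normalized associator; tracking the signs shows that the three "plus" tuples and the three "minus" tuples all evaluate to $+as_{\alpha,\beta}(\beta^2(x'),\beta\alpha(y'),\alpha^2(z'))$ for the appropriately twisted $x',y',z'$, so the sum is $6$ times that single term. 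Undoing the normalization twist then yields $6\,as_{\alpha,\beta}(\alpha^{-1}\beta^2(x),\beta(y),\alpha(z))$, which is the claimed identity.

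The main obstacle I anticipate is purely bookkeeping: making sure that after rewriting each of the six arguments into the $(\beta^2,\beta\alpha,\alpha^2)$-normalized form, the accumulated powers of $\alpha$ and $\beta$ match exactly, and that the signs picked up from the outer-pair antisymmetry versus the $3$-cycle relation are combined correctly so that nothing cancels. There is no conceptual difficulty beyond Lemma \ref{lem:hahj} and Lemma \ref{propriet ass BiH-A}; the care needed is entirely in the exponent and sign tracking, and in confirming that the outer twist $\alpha^{-1}\beta^2\otimes\beta\otimes\alpha$ in Lemma \ref{lem:hahj} is precisely what converts "associator in normalized form" back into the stated $as_{\alpha,\beta}(\alpha^{-1}\beta^2(x),\beta(y),\alpha(z))$.
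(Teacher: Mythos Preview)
Your proposal is correct and follows essentially the same route as the paper: the paper's proof is the one-line remark that the result follows from Lemma~\ref{lem:hahj} together with Proposition~\ref{propriete Bihom regular}, i.e.\ exactly the expansion of $J_{\alpha,\beta}$ into six normalized associators and then the use of the alternating behaviour of $as_{\alpha,\beta}\circ(\beta^2\otimes\alpha\beta\otimes\alpha^2)$ to collapse them. The only cosmetic difference is that you cite Lemma~\ref{propriet ass BiH-A} (the derived antisymmetry/3-cycle relations) rather than Proposition~\ref{propriete Bihom regular} directly, which is equivalent.
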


\begin{proof}
 The result follows immediately from Lemma \ref{lem:hahj} and Proposition \ref{propriete Bihom regular}.
\end{proof}

It is known that, given a BiHom-associative algebra $A$, its commutator BiHom-algebra $A^-$ is
a BiHom-Lie algebra. Also, the commutator algebra of any Hom-alternative algebra is a Hom-Malcev  algebra.
The following main result generalizes both of these facts. It gives us a large class of
BiHom-Malcev-admissible algebras that are in general not BiHom-Lie-admissible.

\begin{thm}\label{BiHom-Malcev  admissible}
Every regular BiHom-alternative algebra is  BiHom-Malcev-admissible.
\end{thm}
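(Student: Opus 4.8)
The plan is to reduce the BiHom-Malcev identity for $A^-$ to an identity about the BiHom-associator in the regular BiHom-alternative algebra $A$, exploiting the two structural facts already available: Lemma~\ref{lem:hahj} (and its corollary, the Proposition stating $J_{\alpha,\beta}(x,y,z)=6\,as_{\alpha,\beta}(\alpha^{-1}\beta^2(x),\beta(y),\alpha(z))$ in $A^-$), and Lemma~\ref{propriet ass BiH-A} together with Proposition~\ref{propriete Bihom regular} governing the (anti)symmetry behaviour of $as_{\alpha,\beta}$ on twisted arguments. First I would observe that conditions (i)--(iii) in the definition of BiHom-Malcev algebra are automatic: $\alpha\beta=\beta\alpha$ and multiplicativity of $\alpha,\beta$ with respect to $[\cdot,\cdot]$ follow from regularity and from $\alpha\mu=\mu\alpha^{\otimes2}$, $\beta\mu=\mu\beta^{\otimes2}$, while BiHom-skewsymmetry $[\beta(x),\alpha(y)]=-[\beta(y),\alpha(x)]$ is a direct check on the definition of the commutator bracket $[x,y]=\mu(x,y)-\mu(\alpha^{-1}\beta(y),\alpha\beta^{-1}(x))$. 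So the entire content is condition (iv), the BiHom-Malcev identity.

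Next I would rewrite both sides of the BiHom-Malcev identity using the Proposition: the left-hand side $J_{\alpha,\beta}(\alpha\beta(x),\alpha\beta(y),[\beta(x),\alpha(z)])$ becomes $6\,as_{\alpha,\beta}$ applied to suitably twisted arguments, one of which is itself the bracket $[\beta(x),\alpha(z)]$, hence a difference of two $\mu$-products; and the right-hand side $[J_{\alpha,\beta}(\beta(x),\beta(y),\beta(z)),\alpha^2\beta^2(x)]$ becomes the commutator of $6\,as_{\alpha,\beta}(\ldots)$ with $\alpha^2\beta^2(x)$, again a difference of two $\mu$-products. After cancelling the common factor $6$ and pushing the structure maps through (using multiplicativity so that $\alpha,\beta$ commute with $\mu$ and with $as_{\alpha,\beta}$ in the appropriate twisted sense), the identity to be proved becomes a polynomial identity in the single trilinear map $as_{\alpha,\beta}$ and the bilinear $\mu$, all of whose entries are words in $\alpha^{\pm1},\beta^{\pm1}$ applied to $x,y,z$. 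The strategy is then to expand everything into a sum of monomials of the form (product of two $\mu$'s or one $\mu$ nested inside an $as$) and to collapse terms pairwise using the three relations of Lemma~\ref{propriet ass BiH-A}: antisymmetry in the outer two slots, the "cyclic-type" exchange relation, and the vanishing when two outer arguments coincide. This is exactly the BiHom-analogue of the classical computation (cf.\ the Malcev identity \eqref{maltsevidentity}) showing that in an alternative algebra the commutator is Malcev; one linearises, uses the linearised alternative laws, and the Teichm\"uller/associator identities to reorganise the $18$-or-so terms.

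The main obstacle I anticipate is purely bookkeeping: keeping track of the many twistings $\alpha^i\beta^j$ that decorate each argument and making sure that, when one applies the relations of Lemma~\ref{propriet ass BiH-A}, the arguments genuinely have the required shape $\beta^2(\cdot)$, $\beta\alpha(\cdot)$, $\alpha^2(\cdot)$ (after factoring out an overall power of $\alpha\beta$ that can be peeled off by multiplicativity). Regularity is essential here, since one repeatedly needs $\alpha^{-1},\beta^{-1}$ to move twists from one argument to another, and it is also what allowed the Proposition and Lemma~\ref{lem:hahj} to be stated in the first place. A secondary technical point is that the BiHom-associator of the commutator bracket is not literally $as_{\alpha,\beta}$ of $A^-$'s own bracket but is expressed, via Lemma~\ref{lem:hahj}, through $as_{\alpha,\beta}$ of $A$; one must be careful to apply the Proposition (which already encapsulates this translation for a BiHom-alternative $A$) rather than re-deriving it. Once the dust settles, both sides reduce to the same explicit expression, namely a single term of the form $as_{\alpha,\beta}$ evaluated on a word involving $x$ twice and $y,z$ once each, and the identity follows; I would present the verification as a chain of equalities analogous to the display in the proof of Theorem~\ref{BiHom-Malcev  construct}, invoking Lemma~\ref{propriet ass BiH-A} at each reduction step.
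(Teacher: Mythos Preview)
Your overall architecture matches the paper's: verify (i)--(iii) by direct inspection, translate both sides of the BiHom-Malcev identity through the Proposition $J_{\alpha,\beta}=6\,as_{\alpha,\beta}(\alpha^{-1}\beta^2\otimes\beta\otimes\alpha)$, cancel the factor $6$, and reduce to an associator calculation in $A$. Where your plan diverges from the paper, and where it has a real gap, is in the tool you intend to use for the final cancellation.

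You propose to ``collapse terms pairwise using the three relations of Lemma~\ref{propriet ass BiH-A}''. Those relations control $as_{\alpha,\beta}(\beta^2 u,\alpha\beta v,\alpha^2 w)$ only when $u,v,w$ are \emph{single} elements; they say nothing about $as_{\alpha,\beta}$ when one slot carries a $\mu$-product. But such terms are unavoidable here: the third argument on the left-hand side is $[\beta(x),\alpha(z)]$, so after applying the Proposition you are looking at quantities of the shape $as_{\alpha,\beta}(\beta^3(x),\alpha\beta^2(y),\alpha\beta(x)\alpha^2(z))$, which (after one use of Lemma~\ref{propriet ass BiH-A}) become $as_{\alpha,\beta}$ with a product in the \emph{first} slot. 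Lemma~\ref{propriet ass BiH-A} alone cannot break these open, and your parenthetical appeal to ``Teichm\"uller/associator identities'' is exactly the missing piece but is not something already available in the paper at that point.

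The paper handles this by first developing the BiHom-Bruck--Kleinfeld function $f$ (Definition~\eqref{f}), proving the BiHom-Teichm\"uller identity (Lemma~\ref{lem1}), relating $f$ to the four-term commutator map $F$ (Lemma~\ref{lem2:homalt}), and then establishing that $f$ is alternating (Proposition~\ref{prop:f alternating}). In the actual proof of Theorem~\ref{BiHom-Malcev  admissible}, the product-in-first-slot associators are rewritten via~\eqref{f} as $f(x,z,x,y)$ and $f(z,x,x,y)$ plus manageable remainder terms; the alternating property of $f$ kills both $f$-values, and what survives is precisely the commutator $[as(\ldots),\alpha^3\beta(x)]$ giving the right-hand side. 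So the decisive lemma is Proposition~\ref{prop:f alternating}, not Lemma~\ref{propriet ass BiH-A}. Your sketch would become correct if you replaced the appeal to Lemma~\ref{propriet ass BiH-A} at the collapse step by an appeal to the alternating property of the BiHom-Bruck--Kleinfeld function.
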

Before starting the proof of Theorem \ref{BiHom-Malcev  admissible}, we will first establish some identities about the BiHom-associator and the BiHom-Jacobiator. Then we will go back to the proof of Theorem  \ref{BiHom-Malcev  admissible}. 

\begin{df}
\label{def:alternating}
Let $V$ be a $\mathbb{K}$ -module and $f \colon V^{\otimes n} \to V$ be an $n$-linear map for some $n \geq 2$.  We say that $f$ is alternating if
$
f = \textrm{sign}(\sigma) f \circ \sigma
$
for each permutation $\sigma$ on $n$ letters, where $\textrm{sign}(\sigma) \in \{\pm 1\}$ is the signature of $\sigma$.
\end{df}

\begin{lem}
\label{lem:alternating}
Let $V$ be a $\mathbb{K}$-module and $f \colon V^{\otimes n} \to V$ be an $n$-linear map.  Then the following statements are equivalent:
\begin{enumerate}
\item
$f$ is alternating,
\item
$f(x_1,\ldots,x_n) = 0$ whenever $x_i = x_j$ for some $i \not= j$,
\item
$f = -f\circ\tau$ for each transposition $\tau$ on $n$ letters,
\item
$f = (-1)^{n-1}f\circ\xi$ and $f = -f\circ\eta$, where $\xi$ is the cyclic permutation $(12\cdots n)$ and $\eta$ is the adjacent transposition $(n-1,n)$.
\end{enumerate}
\end{lem}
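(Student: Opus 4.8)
The plan is to prove the chain of implications $(1)\Rightarrow(4)\Rightarrow(3)\Rightarrow(2)\Rightarrow(1)$, which closes the loop and hence gives equivalence of all four statements. The implication $(1)\Rightarrow(4)$ is immediate, since $\xi$ and $\eta$ are particular permutations and $\mathrm{sign}(\xi)=(-1)^{n-1}$, $\mathrm{sign}(\eta)=-1$; one just specializes the defining identity $f=\mathrm{sign}(\sigma)\,f\circ\sigma$. The implication $(3)\Rightarrow(2)$ is also short: if $x_i=x_j$ with $i\neq j$, apply $f=-f\circ\tau$ with $\tau$ the transposition $(i\,j)$; since swapping the two equal arguments leaves the tuple unchanged, $f(x_1,\ldots,x_n)=-f(x_1,\ldots,x_n)$, and as $\mathrm{char}\,\mathbb{K}=0$ we conclude the value is $0$. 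For $(2)\Rightarrow(1)$, I would use the standard polarization trick: for any transposition $\tau=(i\,j)$, expand $0=f(\ldots,x_i+x_j,\ldots,x_i+x_j,\ldots)$ (equal entries in slots $i$ and $j$) by multilinearity into four terms, two of which vanish by hypothesis, leaving $f(\ldots,x_i,\ldots,x_j,\ldots)+f(\ldots,x_j,\ldots,x_i,\ldots)=0$, i.e. $f=-f\circ\tau$ for every transposition; since every permutation is a product of transpositions and the signature is multiplicative, $f=\mathrm{sign}(\sigma)\,f\circ\sigma$ for all $\sigma$, which is $(1)$.

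The one genuinely non-formal step is $(4)\Rightarrow(3)$: from antisymmetry under the single cycle $\xi=(12\cdots n)$ and the single adjacent transposition $\eta=(n-1,n)$ we must deduce antisymmetry under \emph{every} transposition. The key algebraic fact is that $S_n$ is generated by $\xi$ and $\eta$ (indeed by any $n$-cycle together with an adjacent transposition); more precisely the adjacent transpositions $\tau_k=(k,k+1)$ for $1\le k\le n-1$ generate $S_n$, and each $\tau_k$ is a conjugate of $\eta=\tau_{n-1}$ by a power of $\xi$, namely $\tau_k=\xi^{\,m}\eta\,\xi^{-m}$ for a suitable $m$ (one checks that conjugation by $\xi$ shifts indices cyclically). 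Granting this, I would argue: hypothesis $f=(-1)^{n-1}f\circ\xi$ together with $\xi^n=\mathrm{id}$ gives $f=f\circ\xi^n$ consistently, and for any permutation $\sigma$ written as a word $w$ in $\xi$ and $\eta$, repeatedly applying the two relations $f\circ\xi=(-1)^{n-1}f$ (equivalently $f\circ\xi^{-1}=(-1)^{n-1}f$, using $(-1)^{(n-1)n}=1$) and $f\circ\eta=-f$ collapses $f\circ w$ to $\pm f$ with sign $(-1)^{(n-1)\cdot(\#\,\xi^{\pm}\text{'s})}\cdot(-1)^{\#\,\eta\text{'s}}=\mathrm{sign}(\sigma)$, because $\mathrm{sign}(\xi)=(-1)^{n-1}$ and $\mathrm{sign}(\eta)=-1$ and signature is a homomorphism. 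Specializing to $\sigma$ a transposition gives $(3)$; in fact this argument already delivers $(1)$ directly, so one could equally run $(4)\Rightarrow(1)$ and shorten the cycle.

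The main obstacle, then, is purely the group theory in $(4)\Rightarrow(3)$ — verifying that $\xi$ and $\eta$ generate $S_n$ and that the sign bookkeeping along an arbitrary word is consistent (well-defined independently of the chosen word), which is guaranteed precisely because $\mathrm{sign}$ is a homomorphism on $S_n$. Everything else is multilinear expansion and use of $\mathrm{char}\,\mathbb{K}=0$ in the one place a factor of $2$ must be divided out. I would present $(4)\Rightarrow(3)$ by first recording the conjugation identity $\xi\,\tau_k\,\xi^{-1}=\tau_{k+1}$ (indices mod $n$, with the usual caveat at the wrap-around, which is where one sees the $n$-cycle is essential and not just any permutation), then noting inductively that all $\tau_k$ satisfy $f=-f\circ\tau_k$, and finally invoking that the $\tau_k$ generate $S_n$.
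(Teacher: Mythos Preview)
The paper states Lemma~\ref{lem:alternating} without proof; it is quoted as a standard characterisation of alternating multilinear maps, so there is no argument in the text to compare against. Your proposed cycle $(1)\Rightarrow(4)\Rightarrow(3)\Rightarrow(2)\Rightarrow(1)$ is correct and is exactly the sort of routine verification the authors presumably had in mind when they omitted the proof.

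A couple of minor remarks. First, in $(4)\Rightarrow(3)$ you do not actually need the conjugation identity $\xi\tau_k\xi^{-1}=\tau_{k+1}$ separately: once you know $f\circ\xi=(-1)^{n-1}f$ and $f\circ\eta=-f$, the map $\sigma\mapsto f\circ\sigma$ is already determined on the subgroup $\langle\xi,\eta\rangle$ by multiplicativity, and since $\mathrm{sign}$ is a homomorphism agreeing with your scalars on the generators, you get $f\circ\sigma=\mathrm{sign}(\sigma)f$ for every $\sigma\in\langle\xi,\eta\rangle$. The only thing that genuinely needs checking is $\langle\xi,\eta\rangle=S_n$, and your conjugation computation does that (you obtain all adjacent transpositions $(k,k{+}1)$ as $\xi^{-m}\eta\,\xi^{m}$, $m=0,\ldots,n-2$). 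As you note, this in fact gives $(4)\Rightarrow(1)$ directly. Second, it is worth fixing once and for all what $f\circ\sigma$ means on tuples (left or right action), since the composition law $(f\circ\sigma)\circ\tau$ depends on that choice; the sign bookkeeping is insensitive to the convention, but stating it avoids confusion.
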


\begin{rem}
Let $(A,\mu,\alpha,\beta)$ be a BiHom-algebra. Then $A$ is  a regular BiHom-alternative if and only if  $as\circ (\beta^2 \otimes \alpha\beta \otimes \alpha^2)$ is alternating.
\end{rem}

\begin{df}
Let $(A,\mu,\a,\b)$ be a BiHom-algebra. Define  the function $H: A^{\otimes 4} \to A$ by:
\begin{eqnarray*}
  H(w,x,y,z) &=& as(\beta^2(w)\alpha\beta(x),\alpha^2\beta(y),\alpha^3(z)) - as(\beta^2(x)\alpha\beta(y),\alpha^2\beta(z),\alpha^3(w))  \\
   &+& as(\beta^2(y)\alpha\beta(z),\alpha^2\beta(w),\alpha^3(x)).
\end{eqnarray*}

\end{df}

\begin{lem}\label{lem1}
In a regular BiHom-alternative algebra $(A,\mu,\alpha,\beta)$,  the identity
$$H(w,x,y,z)=\alpha^2\beta^2(w)as(\alpha\beta(x),\alpha^2(y),\alpha^3\beta^{-1}(z))+ as(\beta^2(w),\alpha\beta(x),\alpha^2(y))\alpha^3\beta(z)$$
holds for all $w,x,y,z \in A$.
\end{lem}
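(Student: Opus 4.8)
The plan is to expand both sides of the claimed identity directly in terms of the multiplication $\mu$, using the compatibility relations $\alpha\mu=\mu\alpha^{\otimes 2}$, $\beta\mu=\mu\beta^{\otimes 2}$ and $\alpha\beta=\beta\alpha$ to push all structure maps onto the innermost factors, and then to reorganize the resulting words into a combination of BiHom-associators that can be collapsed using the alternating property of $as\circ(\beta^2\otimes\alpha\beta\otimes\alpha^2)$ recorded in the Remark above and in Lemma \ref{propriet ass BiH-A}. Concretely, each of the three terms defining $H(w,x,y,z)$ is a BiHom-associator one of whose entries is itself a product; writing $as_{\alpha,\beta}(uv,p,q)=\mu(\mu(\mu(u,v),p),\beta(q))-\mu(\alpha\mu(u,v),\mu(p,q))$ and expanding $\alpha\mu(u,v)=\mu(\alpha(u),\alpha(v))$ turns $H$ into an alternating sum of six four-fold products of elements of the form $\beta^a\alpha^b$ applied to $w,x,y,z$. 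The right-hand side, when expanded the same way, produces four such products. The claim is then a purely formal identity among these words once the BiHom-alternative relations are invoked.

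The key steps, in order, are: (1) expand $H(w,x,y,z)$ fully, obtaining six monomials $\mu(\mu(\mu(\cdot,\cdot),\cdot),\cdot)$-type and $\mu(\mu(\cdot,\cdot),\mu(\cdot,\cdot))$-type terms after applying the morphism compatibilities, being careful with the powers of $\alpha$ and $\beta$; (2) similarly expand the two associators on the right-hand side, noting that in $\alpha^2\beta^2(w)\,as(\alpha\beta(x),\alpha^2(y),\alpha^3\beta^{-1}(z))$ the inner $\beta^{-1}$ is legitimate since $A$ is regular, and that $\beta$ applied to $\alpha^3\beta^{-1}(z)$ in the associator's last slot gives $\alpha^3(z)$, matching the powers appearing in $H$; (3) form the difference of the two sides and group the twelve (or so) resulting monomials into expressions of the form $as_{\alpha,\beta}(\beta^2(\cdot),\alpha\beta(\cdot),\alpha^2(\cdot))$ with two equal arguments, which vanish by Lemma \ref{propriet ass BiH-A}, or into pairs that cancel by the skew-symmetry relations $as(\beta^2(a),\alpha\beta(b),\alpha^2(c))=-as(\beta^2(c),\alpha\beta(b),\alpha^2(a))$ and $=as(\beta^2(c),\alpha\beta(a),\alpha^2(b))$ from that same lemma; (4) conclude that the difference is zero.

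The main obstacle will be the bookkeeping in step (3): matching the correct powers of $\alpha$ and $\beta$ on each of the four free variables $w,x,y,z$ across roughly a dozen quartic monomials, and recognizing the right grouping into left-alternative and right-alternative substitution instances. In particular one must track that the "outer" factor $\alpha^2\beta^2(w)$ on the right-hand side has the same twisting as the factor of $w$ produced from the first term $as(\beta^2(w)\alpha\beta(x),\alpha^2\beta(y),\alpha^3(z))$ of $H$ after expansion, and symmetrically for the factor $\alpha^3\beta(z)$; the cyclic structure of $H$ (the three terms cycle $w\to x\to y\to z\to w$ with a sign) is what makes the non-extremal terms telescope. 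I expect no conceptual difficulty beyond this: once the alternating identity for $as\circ(\beta^2\otimes\alpha\beta\otimes\alpha^2)$ is available, the computation is forced, and the lemma follows by a direct (if lengthy) verification, which can be left to the reader in the same spirit as the proof of the earlier Lemma.
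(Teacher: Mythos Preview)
Your brute-force strategy of expanding both sides into quartic monomials and then regrouping is valid in principle and would eventually succeed, but it is not the route taken in the paper, and one detail of your plan is off. The paper does not expand first and regroup afterwards; instead it applies the alternating property of $as\circ(\beta^2\otimes\alpha\beta\otimes\alpha^2)$ (Lemma~\ref{propriet ass BiH-A}) \emph{before} any expansion, directly at the associator level, treating the product entries as single letters. Concretely, writing the second summand of $H$ as $as(\beta^2(x\alpha\beta^{-1}(y)),\alpha\beta(\alpha(z)),\alpha^2(\alpha(w)))$ and cycling gives $as(\alpha\beta^2(w),\alpha\beta(x)\alpha^2(y),\alpha^3(z))$; the third summand is similarly rewritten as $as(\alpha\beta^2(w),\alpha^2\beta(x),\alpha^2(y)\alpha^3\beta^{-1}(z))$. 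After this move all three associators in $H$ have $w$ (suitably twisted) alone in one slot, and expanding the three associators now gives six monomials of which two cancel outright; the surviving four are precisely the expansion of the right-hand side. No further use of BiHom-alternativity is needed after the expansion.

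Your step~(3), by contrast, would require reassembling the eight surviving monomials into pairs of associators with a product in one slot and then invoking the alternating property to match them---which amounts to discovering the paper's rewriting after the fact. Note also that the part of your plan that looks for ``associators with two equal arguments'' is inapplicable: $w,x,y,z$ are independent, so no such repetitions occur, and all the cancellation comes from the cyclic/skew relations you mention in the second clause. The paper's order of operations (rewrite, then expand) is what keeps the computation short.
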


\begin{proof}
First, since $(A,\mu,\alpha,\beta)$ is a regular BiHom-alternative algebra, one has
\begin{eqnarray*}
  as(\beta^2(x)\alpha\beta(y),\alpha^2\beta(z),\alpha^3(w))  &=& as\big(\beta^2(x\alpha\beta^{-1}(y)),\alpha\beta(\alpha(z)),\alpha^2(\alpha(w))\big) \\
   &=& as\big(\alpha\beta^2(w),\alpha\beta(x)\alpha^2(y),\alpha^3(z)\big).
\end{eqnarray*}
In addition,
\begin{eqnarray*}
 as(\beta^2(y)\alpha\beta(z),\alpha^2\beta(w),\alpha^3(x))  &=& as\big(\beta^2(y\alpha\beta^{-1}(z)),\alpha\beta(\alpha(w)),\alpha^2(\alpha(x))\alpha\big) \\
   &=&  as\big(\alpha\beta^2(w),\alpha^2\beta(x),\alpha^2(y)\alpha^3\beta^{-1}(z)\big).
\end{eqnarray*}
Therefore,
\begin{eqnarray*}
 H(w,x,y,z) &=&\Big((\beta^2(w)\alpha\beta(x))\alpha^2\beta(y)\Big)\alpha^3\beta(z)-
 \Big(\alpha\beta^2(w)\alpha^2\beta(x)\Big)\Big(\alpha^2\beta(y)\alpha^3(z)\Big) \\
   &-&\Big(\alpha\beta^2(w)(\alpha\beta(x)\alpha^2(y))\Big)\alpha^3\beta(z)+\alpha^2\beta^2(w)\Big((\alpha\beta(x)\alpha^2(y))\alpha^3(z)\Big)  \\
   &+& \Big(\alpha\beta^2(w)\alpha^2\beta(x)\Big)\Big(\alpha^2\beta(y)\alpha^3(z)\Big) -\alpha^2\beta^2(w)\Big(\alpha^2\beta(x)(\alpha^2(y)\alpha^3\beta^{-1}(z))\Big) \\
   &=&  \alpha^2\beta^2(w)as(\alpha\beta(x),\alpha^2(y),\alpha^3\beta^{-1}(z))+ as(\beta^2(w),\alpha\beta(x),\alpha^2(y))\alpha^3\beta(z).
\end{eqnarray*}

\end{proof}
We now build an other  map on four variables using the BiHom-associator that is alternating in a  BiHom-alternative algebra.

\begin{df}
Let $(A,\mu,\alpha,\beta)$ be a regular BiHom-algebra.  Define the \textbf{BiHom-Bruck-Kleinfeld function} $f \colon A^{\otimes 4} \to A$ as the multilinear map
\begin{equation}
\label{f}
\begin{split}
 f(w,x,y,z) &= as(\beta^2(w)\alpha\beta(x),\alpha^2\beta(y),\alpha^3(z))    - as(\beta^2(x),\alpha\beta(y),\alpha^2(z))\alpha^3\beta(w) \\
    & - \alpha^2\beta^2(x)as(\alpha\beta(w),\alpha^2(y),\alpha^3\beta^{-1}(z)).
\end{split}
\end{equation}

 Define another multi-linear map $F \colon A^{\otimes 4} \to A$ as
\begin{equation}
\label{F}
F = [\cdot,\cdot] \circ ((as\circ (\beta^2\otimes \alpha\beta \otimes \alpha^2))\otimes \alpha^3\beta) \circ (id - \xi + \xi^2 - \xi^3),
\end{equation}
where $[\cdot,\cdot] = \mu \circ (id - (\alpha^{-1}\beta\otimes \alpha\beta^{-1})\circ \tau)$ is the commutator bracket of $\mu$ and $\xi$ is the cyclic permutation
$$\xi(w \otimes x \otimes y \otimes z) = z \otimes w \otimes x \otimes y.$$
\end{df}
That is, for each $w,x,y,z \in A$ we have
\begin{eqnarray*}
  F(w,x,y,z) &=& [as(\beta^2(w),\alpha\beta(x),\alpha^2(y)),\alpha^3\beta(z)]- [as(\beta^2(z),\alpha\beta(w),\alpha^2(x)),\alpha^3\beta(y)] \\
   &+&  [as(\beta^2(y),\alpha\beta(z),\alpha^2(w)),\alpha^3\beta(x)]- [as(\beta^2(x),\alpha\beta(y),\alpha^2(z)),\alpha^3\beta(w)].
\end{eqnarray*}

The BiHom-Bruck-Kleinfeld function $f$ is the BiHom-type analogue of a map studied by Bruck and Kleinfeld (\cite{bk} (2.7)).  It is closely related to the map $F$.

\begin{lem}
\label{lem2:homalt}
In a BiHom-alternative algebra $(A,\mu,\alpha)$, we have
\[
F = f \circ (id - \rho + \rho^2),
\]
where $\rho = \xi^3$ is the cyclic permutation $\rho(w \otimes x \otimes y \otimes z) = x \otimes y \otimes z \otimes w$.
In terms of elements, the map $F$ is given by
$$F(w,x,y,z)=f(w,x,y,z)-f(x,y,z,w)+f(y,z,w,x).$$
\end{lem}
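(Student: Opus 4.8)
The plan is to show that the two multilinear maps $F$ and $f\circ(id-\rho+\rho^2)$ agree by expanding both sides as linear combinations of BiHom-associators and matching terms. The key tool is Lemma \ref{lem1}, which rewrites the auxiliary function $H(w,x,y,z)$ — built from three cyclically-permuted ``associator-of-a-product'' terms — as the sum $\alpha^2\beta^2(w)\,as(\alpha\beta(x),\alpha^2(y),\alpha^3\beta^{-1}(z)) + as(\beta^2(w),\alpha\beta(x),\alpha^2(y))\,\alpha^3\beta(z)$. First I would observe that, unwinding the definition of the commutator bracket $[\cdot,\cdot]=\mu\circ(id-(\alpha^{-1}\beta\otimes\alpha\beta^{-1})\circ\tau)$ together with the multiplicativity of $\alpha,\beta$, each term $[as(\beta^2(\cdot),\alpha\beta(\cdot),\alpha^2(\cdot)),\alpha^3\beta(\cdot)]$ appearing in $F$ splits as a product $as(\cdots)\,\alpha^3\beta(\cdot)$ minus a product $\alpha^2\beta^2(\cdot)\,as(\cdots)$, after suitably redistributing the structure maps using regularity. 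So $F(w,x,y,z)$ becomes an alternating-signed sum of eight ``one-sided product with an associator'' terms indexed by the four cyclic rotations of $(w,x,y,z)$.

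Next I would expand $f(w,x,y,z)-f(x,y,z,w)+f(y,z,w,x)$ using the definition \eqref{f} of the BiHom-Bruck-Kleinfeld function. Each $f$ contributes one ``associator-of-a-product'' term $as(\beta^2(\cdot)\alpha\beta(\cdot),\alpha^2\beta(\cdot),\alpha^3(\cdot))$ plus two ``one-sided product'' terms. The three associator-of-a-product terms coming from the three summands are precisely (after the bookkeeping of which variable plays which role) the three terms defining $H(w,x,y,z)$ — this is where the cyclic pattern built into the definition of $H$ is designed to match. Applying Lemma \ref{lem1} converts that block of three terms into the two ``one-sided product'' terms on the right-hand side of that lemma. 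At that point both $F$ and $f\circ(id-\rho+\rho^2)$ are expressed purely as combinations of ``one-sided product with associator'' expressions, and it remains to check they coincide termwise.

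The main obstacle — really the only nontrivial part — will be the careful bookkeeping of the twelve or so structure-map-decorated terms on each side: verifying that after using regularity of $\alpha,\beta$ (to move $\alpha^{\pm1},\beta^{\pm1}$ across products via $\alpha\mu=\mu\alpha^{\otimes2}$, $\beta\mu=\mu\beta^{\otimes2}$) and relabeling according to the cyclic shifts $\rho,\rho^2$, every term produced on the $F$-side cancels or matches a term on the $f\circ(id-\rho+\rho^2)$-side with the correct sign. One should also double-check that the $H$-block genuinely appears inside $f(w,x,y,z)-f(x,y,z,w)+f(y,z,w,x)$ with the exact arguments $(w,x,y,z)$ rather than a permutation thereof; this amounts to inspecting the leading terms of the three $f$'s and comparing with the definition of $H$. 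Once the associator-of-a-product terms are eliminated via Lemma \ref{lem1}, the residual identity among the one-sided terms is a short direct computation, which I would leave to the reader. Thus $F=f\circ(id-\rho+\rho^2)$, i.e. $F(w,x,y,z)=f(w,x,y,z)-f(x,y,z,w)+f(y,z,w,x)$.
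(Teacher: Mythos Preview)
Your strategy is the paper's strategy: expand the four commutators in $F$ into eight ``one-sided product with an associator'' terms, expand $f(w,x,y,z)-f(x,y,z,w)+f(y,z,w,x)$ via the definition \eqref{f}, recognize the three leading associator-of-a-product terms as $H(w,x,y,z)$, invoke Lemma~\ref{lem1} to replace that block by two one-sided terms, and then match. The paper merely organizes the same computation in the reverse direction (it isolates two of the eight $F$-terms as the right-hand side of Lemma~\ref{lem1}, passes to $H$, and then pulls out $f$, $-f\circ\rho$, $f\circ\rho^2$), but the content is identical.

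One point deserves more than ``leave to the reader'': the final matching of the eight residual one-sided terms does \emph{not} follow from regularity and multiplicativity alone. Several pairs differ by a permutation of the three associator arguments --- for instance $\alpha^2\beta^2(z)\,as(\alpha\beta(y),\alpha^2(w),\alpha^3\beta^{-1}(x))$ versus $\alpha^2\beta^2(z)\,as(\alpha\beta(w),\alpha^2(x),\alpha^3\beta^{-1}(y))$ --- and these are identified only via the alternating property of $as\circ(\beta^2\otimes\alpha\beta\otimes\alpha^2)$ from Lemma~\ref{propriet ass BiH-A}. That is precisely where the BiHom-alternative hypothesis enters a second time (the first being inside the proof of Lemma~\ref{lem1}), so you should name it explicitly rather than fold it into ``relabeling according to the cyclic shifts''.
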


\begin{proof}
For each $w,x,y,z \in A$, on has
\begin{eqnarray*}
 & & [as(\beta^2(w),\alpha\beta(x),\alpha^2(y)),\alpha^3\beta(z)] \\
  &=& as(\beta^2(w),\alpha\beta(x),\alpha^2(y))\alpha^3\beta(z)-\alpha^2\beta^2(z)as(\alpha\beta(w),\alpha^2(x),\alpha^3\beta^{-1}(y)),
\end{eqnarray*}
\begin{eqnarray*}
   & & [as(\beta^2(z),\alpha\beta(w),\alpha^2(x)),\alpha^3\beta(y)] \\
   &=& as(\beta^2(z),\alpha\beta(w),\alpha^2(x))\alpha^3\beta(y)                                -\alpha^2\beta^2(y)as(\alpha\beta(z),\alpha^2(w),\alpha^3\beta^{-1}(x)),
\end{eqnarray*}
\begin{eqnarray*}
   & &  [as(\beta^2(y),\alpha\beta(z),\alpha^2(w)),\alpha^3\beta(x)] \\
  &=&  as(\beta^2(y),\alpha\beta(z),\alpha^2(w))\alpha^3\beta(x)-\alpha^2\beta^2(x)as(\alpha\beta(y),\alpha^2(z),\alpha^3\beta^{-1}(w)),
\end{eqnarray*}
and
\begin{eqnarray*}
   & &  [as(\beta^2(x),\alpha\beta(y),\alpha^2(z)),\alpha^3\beta(w)]\\
   &=&   as(\beta^2(x),\alpha\beta(y)\alpha^2(z)),\alpha^3\beta(w)\               -\alpha^2\beta^2(w)as(\alpha\beta(x),\alpha^2(y),\alpha^3\beta^{-1}(z)).
\end{eqnarray*}

Then, by Lemma \ref{lem1}, we get
\begin{eqnarray*}
 & & \alpha^2\beta^2(w)as(\alpha\beta(x),\alpha^2(y),\alpha^3\beta^{-1}(z))+ as(\beta^2(w),\alpha\beta(x),\alpha^2(y))\alpha^3\beta(z)   \\
   &=& as(\beta^2(w)\alpha\beta(x),\alpha^2\beta(y),\alpha^3(z))
   -  as(\beta^2(x)\alpha\beta(y),\alpha^2\beta(z),\alpha^3(w))  \\
   & & \qquad  \qquad \qquad \qquad  \qquad \qquad \qquad +as(\beta^2(y)\alpha\beta(z),\alpha^2\beta(w),\alpha^3(x)) \\
   &=&  f(w,x,y,z) + as(\beta^2(x),\alpha\beta(y),\alpha^2(z))\alpha^3\beta(w)
  +\alpha^2\beta^2(x)as(\alpha\beta(w),\alpha^2(y),\alpha^3\beta^{-1}(z))   \\
   &-&  f(x,y,z,w) - as(\beta^2(y),\alpha\beta(z),\alpha^2(w))\alpha^3\beta(x)
  -\alpha^2\beta^2(y)as(\alpha\beta(x),\alpha^2(z),\alpha^3\beta^{-1}(w))  \\
   &+&  f(y,z,w,x) + as(\beta^2(z),\alpha\beta(w),\alpha^2(x))\alpha^3\beta(y)
  +\alpha^2\beta^2(z)as(\alpha\beta(y),\alpha^2(w),\alpha^3\beta^{-1}(x)).
\end{eqnarray*}
Now, since $as\circ (\beta^2 \otimes \alpha\beta \otimes  \alpha^2 )$ is alternating, we have
\begin{eqnarray*}
 as(\alpha\beta(y),\alpha^2(w),\alpha^3\beta^{-1}(x)) &=& \alpha\beta^{-1}\Big(as(\beta^2(y),\alpha\beta(w),\alpha^2(x)) \Big)\\
   &=&  \alpha\beta^{-1}\Big(as(\beta^2(w),\alpha\beta(x),\alpha^2(y))\Big) \\
   &=& as(\alpha\beta(w),\alpha^2(x),\alpha^3\beta^{-1}(y)),
\end{eqnarray*}
\begin{eqnarray*}
 as(\alpha\beta(x),\alpha^2(z),\alpha^3\beta^{-1}(w)) &=& \alpha\beta^{-1}\Big(as(\beta^2(x),\alpha\beta(z),\alpha^2(w)) \Big)\\
   &=&  \alpha\beta^{-1}\Big(as(\beta^2(z),\alpha\beta(w),\alpha^2(x)) \Big)\\
   &=& as(\alpha\beta(z),\alpha^2(w),\alpha^3\beta^{-1}(x))
\end{eqnarray*}
and
\begin{eqnarray*}
 as(\alpha\beta(w),\alpha^2(y),\alpha^3\beta^{-1}(z)) &=& \alpha\beta^{-1}\Big(as(\beta^2(w),\alpha\beta(y),\alpha^2(z))\Big) \\
   &=&  \alpha\beta^{-1}\Big(as(\beta^2(y),\alpha\beta(z),\alpha^2(w)) \Big)\\
   &=& as(\alpha\beta(y),\alpha^2(z),\alpha^3\beta^{-1}(w)).
\end{eqnarray*}
Then $F(w,x,y,z)=f(w,x,y,z)-f(x,y,z,w)+f(y,z,w,x)$, for each $w,x,y,z \in A$.

\end{proof}

\begin{prop}
\label{prop:f alternating}
Let $(A,\mu,\alpha,\beta)$ be a regular BiHom-alternative algebra.  Then the BiHom-Bruck-Kleinfeld function $f$ is alternating.
\end{prop}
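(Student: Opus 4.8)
The plan is to derive the alternating property from Lemma~\ref{lem:alternating}. Linearizing, it suffices to show that $f$ changes sign under each of the three adjacent transpositions $(1\,2),(2\,3),(3\,4)$ of its arguments: since $\mathbb{K}$ has characteristic $0$, the statement ``$f$ vanishes whenever two consecutive arguments coincide'' is equivalent by polarization to this sign change, and since the adjacent transpositions generate $S_{4}$ and $f\circ(\tau\tau')=(f\circ\tau)\circ\tau'$, part~(1) of Lemma~\ref{lem:alternating} then yields that $f$ is alternating. Throughout I will use regularity together with $\alpha\mu=\mu\alpha^{\otimes2}$ and $\beta\mu=\mu\beta^{\otimes2}$ to slide the structure maps (and their inverses) past products, and I abbreviate $\widetilde{as}:=as\circ(\beta^{2}\otimes\alpha\beta\otimes\alpha^{2})$, which is alternating by the Remark preceding the statement.

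The transposition $(3\,4)$ is the easy one. Using the compatibilities above one rewrites the three summands of $f(w,x,y,z)$ from~(\ref{f}) as
\[ f(w,x,y,z)=\widetilde{as}\big(\mu(w,\alpha\beta^{-1}(x)),\alpha(y),\alpha(z)\big)-\widetilde{as}(x,y,z)\,\alpha^{3}\beta(w)-\alpha^{2}\beta^{2}(x)\,\widetilde{as}\big(\alpha\beta^{-1}(w),\alpha\beta^{-1}(y),\alpha\beta^{-1}(z)\big), \]
so that $y$ and $z$ occupy two slots of an alternating trilinear map in every term; interchanging them therefore changes the sign of each term, whence $f(w,x,y,z)=-f(w,x,z,y)$.

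The transpositions $(1\,2)$ and $(2\,3)$ are the crux, and for these I would transport the classical Bruck--Kleinfeld computation to the BiHom setting. Take $(1\,2)$, i.e.\ $f(w,w,y,z)=0$. Feeding the repeated argument into Lemma~\ref{lem1}, and using the right BiHom-alternative identity $as(v,\beta(u),\alpha(u))=0$ (the form of Lemma~1.1) to discard the BiHom-associators that acquire a repeated entry, the claim $f(w,w,y,z)=0$ reduces to the single relation
\[ as\big(\mu(\beta^{2}(w),\alpha\beta(y)),\alpha^{2}\beta(z),\alpha^{3}(w)\big)=\widetilde{as}(w,y,z)\,\alpha^{3}\beta(w), \]
which is the BiHom analogue of $(wy,z,w)=(w,y,z)w$; this in turn follows, by an argument parallel to the classical one, from Lemma~\ref{lem1} together with the BiHom-flexibility identity $as(\beta^{2}(u),\beta\alpha(v),\alpha^{2}(u))=0$ of Lemma~\ref{propriet ass BiH-A}. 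The case $(2\,3)$, i.e.\ $f(w,x,x,z)=0$, is handled in the same spirit: specializing the repeated argument in Lemma~\ref{lem1} annihilates the terms carrying a factor $\widetilde{as}(x,x,\cdot)=0$, and the surviving BiHom-associators cancel in pairs by the antisymmetry of $\widetilde{as}$ and the left and right BiHom-alternative identities~(\ref{sa1})--(\ref{sa2}).

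The main obstacle is precisely this last step. The telescoping that, for an ordinary alternative algebra, turns the Teichm\"uller identity plus alternativity and flexibility into $(wy,z,w)=(w,y,z)w$ and $f(w,x,x,z)=0$ must be reproduced with every BiHom-associator carrying the exact decoration by powers of $\alpha,\beta,\alpha^{-1},\beta^{-1}$ that converts it into $\widetilde{as}$; verifying that these decorations stay mutually consistent across the whole chain of substitutions — so that nothing is left uncancelled — is the real content of the proof, and Lemma~\ref{lem1} (the BiHom Teichm\"uller-type identity for $H$), together with the regularity hypothesis, is exactly what keeps the bookkeeping under control.
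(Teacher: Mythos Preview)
Your treatment of the transposition $(3\,4)$ is correct and is exactly what the paper does for that piece. The difference is in how you handle the remaining generators. You try to verify $(1\,2)$ and $(2\,3)$ directly, reducing $f(w,w,y,z)=0$ to the BiHom analogue of $(wy,z,w)=(w,y,z)w$ and then asserting that this follows ``by an argument parallel to the classical one'' from Lemma~\ref{lem1} and BiHom-flexibility. But you never carry out that argument, and you yourself identify the decoration-matching as ``the real content of the proof''. Concretely, if you specialize $H(w,y,z,w)$ in Lemma~\ref{lem1}, one of the three summands on the left, namely $as(\beta^{2}(z)\alpha\beta(w),\alpha^{2}\beta(w),\alpha^{3}(y))$, does not vanish by any single BiHom-alternative or flexibility relation, so the reduction is not as immediate as you suggest; a further substitution and another appeal to the alternating behaviour of $\widetilde{as}$ are still needed. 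The same issue arises for $(2\,3)$. As written, this is a plausible outline rather than a proof.

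The paper avoids this computation entirely by a different and shorter route. It introduces the auxiliary map $F$ of~(\ref{F}) and proves (Lemma~\ref{lem2:homalt}) that $F=f\circ(\mathrm{id}-\rho+\rho^{2})$. Since $F=-F\circ\xi$ is immediate from its definition, one gets $0=F\circ(\mathrm{id}+\rho)=f\circ(\mathrm{id}+\xi)$, i.e.\ $f=-f\circ\xi$ for the $4$-cycle $\xi$. Together with the easy transposition $\eta=(3\,4)$ that you already have, and the fact that $\xi$ and $\eta$ generate $S_{4}$, this finishes the proof with no need for the BiHom version of $(wy,z,w)=(w,y,z)w$. What the paper's approach buys is precisely the avoidance of the bookkeeping you flag as the main obstacle; what your approach would buy, if completed, is a self-contained argument that does not pass through $F$ or Lemma~\ref{lem2:homalt}.
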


\begin{proof}
Note that $(id-\xi+\xi^2-\xi^3)\circ \xi= \xi-\xi^2+\xi^3-id=-(id-\xi+\xi^2-\xi^3)$, then $-F\circ \xi=F$, which implies that $-F=F\circ \xi^3=F\circ \rho$ (since $\rho=\xi^3$).
Observe that $\rho^3 = \xi$.  Thus, we have:
\[
\begin{split}
0 &= F \circ (id + \rho)\\
&= f \circ (id - \rho + \rho^2) \circ (id + \rho) \quad\text{(by Lemma \ref{lem2:homalt})}\\
&= f \circ (id + \rho^3)\\
&= f \circ (id + \xi).
\end{split}
\]
Equivalently, we have
\begin{equation}
\label{fxi}
f = -f\circ\xi,
\end{equation}
so $f$ changes sign under the cyclic permutation $\xi$.  From Definition \eqref{f} of $f$ and the fact that $as\circ (\beta^2 \otimes \alpha\beta \otimes \alpha^2)$ is alternating in a BiHom-alternative algebra, we infer also that
\begin{equation}
\label{feta}
f = -f\circ\eta,
\end{equation}
where $\eta$ is the adjacent transposition $\eta(w \otimes x \otimes y \otimes z) = w \otimes x \otimes z \otimes y$.  So $f$ changes sign under the transposition $\eta$.  Since the cyclic permutation $\xi$ and the adjacent transposition $\eta$ generate the symmetric group on four letters, we infer from \eqref{fxi} and \eqref{feta} that $f$ is alternating.

\end{proof}

Now, we are able to prove Theorem \ref{BiHom-Malcev  admissible}.
\begin{proof}
Let $(A,\mu,\alpha,\beta)$ be a BiHom-alternative algebra and $A^- = (A,[\cdot,\cdot],\alpha,\beta)$ be its commutator BiHom-algebra.

First, we check that the bracket product $[\cdot,\cdot]$ is compatible with the structure maps $\alpha$ and $\beta$.
For any $x, y \in A^-$, we have
\begin{eqnarray*}
   [\alpha(x),\alpha(y)]&=& \alpha(x)\alpha(y)-\alpha^{-1}\beta(\alpha(y))\alpha\beta^{-1}(\alpha(x)) \\
   &=& \alpha(x)\alpha(y)-\alpha\alpha^{-1}\beta(y)\alpha^2\beta^{-1}(x) \\
   &=&  \alpha\big([x,y]\big).
\end{eqnarray*}
Similarly, one can prove that $\beta\big([x,y]\big)=[\beta(x),\beta(y)]$.

The commutator bracket $[\cdot,\cdot]=\mu\circ(id - (\alpha^{-1}\beta\otimes \alpha\beta^{-1})\circ \tau)$ is BiHom-skewsymmetric in the sense that for all $x,y \in A$, we have

  $[\beta(x),\alpha(y)] = \beta(x)\alpha(y)-\beta(y)\alpha(x) $ and
  $ [\beta(y),\alpha(x)]= \beta(y)\alpha(x)-\beta(x)\alpha(y)$,

 then $[\beta(x),\alpha(y)] =- [\beta(y),\alpha(x)]$.
 Thus, it remains to show that the BiHom-Malcev  identity  holds in $A^-$, i.e.,
$$J_{\alpha,\beta}(\alpha\beta(x),\alpha\beta(y),[\beta(x),\alpha(z)])=[J_{\alpha,\beta}(\beta(x),\beta(y),\beta(z)),\alpha^2\beta^2(x)].$$
Note first that $J_{\alpha,\beta}(\beta(x),\beta(y),\beta(z))=6as(\alpha^{-1}\beta^3(x),\beta^2(y),\alpha\beta(z))$.

In addition, $[\beta(x),\alpha(z)]=\beta(x)\alpha(z)-\beta(z)\alpha(x)$.
Therefore,
\begin{eqnarray*}
  & &J_{\alpha,\beta}(\alpha\beta(x),\alpha\beta(y),[\beta(x),\alpha(z)]) =  J_{\alpha,\beta}(\alpha\beta(x),\alpha\beta(y),\beta(x)\alpha(z))-J_{\alpha,\beta}(\alpha\beta(x),\alpha\beta(y),\beta(z)\alpha(x))  \\
   &=& 6as(\beta^3(x),\alpha\beta^2(y),\alpha\beta(x)\alpha^2(z))-6  as(\beta^3(x),\alpha\beta^2(y),\alpha\beta(z)\alpha^2(x)) \\
   &=& 6 as\big(\beta^2(\beta(x)),\alpha\beta(\beta(y)),\alpha^2(\alpha^{-1}\beta(x)z)\big)-6 as\big(\beta^2(\beta(x)),\alpha\beta(\beta(y)),\alpha^2(\alpha^{-1}\beta(x))\big) \\
   &=& 6as(\alpha^{-1}\beta^3(x)\beta^2(z),\alpha\beta^2(x),\alpha^2\beta(y)) - 6as(\alpha^{-1}\beta^3(z)\beta^2(x),\alpha\beta^2(x),\alpha^2\beta(y))\\
   &=& 6\alpha^{-1}\beta\Big(as(\beta^2(x)\alpha\beta(z),\alpha^2(x),\alpha^3(y))-as(\beta^2(z)\alpha\beta(x),\alpha^2(x),\alpha^3(y))\Big) \\
   &=& 6\alpha^{-1}\beta\bigg( f(x,z,x,y) + as(\beta^2(z),\alpha\beta(x),\alpha^2(y))\alpha^3\beta(x)
  +\alpha^2\beta^2(z)as(\alpha\beta(x),\alpha^2(x),\alpha^3\beta^{-1}(y)) \\
   &-& f(z,x,x,y) + as(\beta^2(x),\alpha\beta(x),\alpha^2(y))\alpha^3\beta(z)
  +\alpha^2\beta^2(x)as(\alpha\beta(z),\alpha^2(x),\alpha^3\beta^{-1}(y))\bigg)  \\
   &=&6\alpha^{-1}\beta\bigg(as(\beta^2(z),\alpha\beta(x),\alpha^2(y))\alpha^3\beta(x)-
   \alpha^2\beta^2(x)as(\alpha\beta(z),\alpha^2(x),\alpha^3\beta^{-1}(y))\bigg) \\
   &=& 6\alpha^{-1}\beta\big[as(\beta^2(z),\alpha\beta(x),\alpha^2(y)),\alpha^3\beta(x)\big] \\
   &=& 6\big[as(\alpha^{-1}\beta^3(x),\beta^2(y),\alpha\beta(z),\alpha^2\beta^2(x)\big] \\
   &=&  [J(\beta(x),\beta(y),\beta(z)),\alpha^2\beta^2(x)].
\end{eqnarray*}

\end{proof}


\section{Further properties of BiHom-alternative algebras}


In this section, we consider further properties of BiHom-alternative algebras, including BiHom-type analogues of the Moufang identities \cite{moufang} (Theorem \ref{BiHom moufang}) and more identities concerning the BiHom-Bruck-Kleinfeld function \eqref{f} (Proposition \ref{prop:f2}).

In any alternative algebra, the following \textbf{Moufang identities} \cite{moufang} hold:
\begin{equation}
\label{moufang}
\begin{split}
(xyx)z &= x(y(xz)),\\
((zx)y)x &= z(xyx),\\
(xy)(zx) &= x(yz)x.
\end{split}
\end{equation}
Here $xyx = (xy)x = x(yx)$ is unambiguous in an alternative algebra.  Now we provide analogues of the Moufang identities in a BiHom-alternative algebra.  The proof below is the BiHom version of that in \cite{bk} (Lemma 2.2).

\begin{thm}[\textbf{BiHom-Moufang identities}]
\label{BiHom moufang}
Let $(A,\mu,\alpha,\beta)$ be a regular  BiHom-alternative algebra.  Then the following BiHom-Moufang identities hold for all $x,y,z \in A$:
\begin{itemize}
  \item [(i)] $\Big(\beta^3(x)\big(\beta^2(y)\alpha\beta(x)\big)\Big)\alpha^2\beta^2(z)=\alpha\beta^3(x)\Big(\alpha\beta^2(y)\big(\alpha\beta(x)\alpha^2(z)\big)\Big)$.
  \item [(ii)]   $\Big(\big(\beta^2(z)\alpha\beta(x)\big)\alpha^2\beta(y)\Big)\alpha^3\beta(x)=\alpha^2\beta^2(z)\Big(\big(\alpha\beta(x)\alpha^2(y)\big)\alpha^3(x)\Big)$.
  \item[(iii)] $\Big(\alpha\beta^2(x)\alpha^2\beta(y)\Big)\Big(\alpha^2\beta(z)\alpha^3(x)\Big)=\Big(\alpha\beta^2(x)\big(\alpha\beta(y)\alpha^2(z)\big)\Big)\alpha^3\beta(x)$.
\end{itemize}
\end{thm}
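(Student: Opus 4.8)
The plan is to prove the three BiHom-Moufang identities by reducing each one to an instance of the BiHom-associator identities already collected in Lemma \ref{propriet ass BiH-A} together with the alternating property of $as\circ(\beta^2\otimes\alpha\beta\otimes\alpha^2)$ (the Remark preceding Definition \ref{def:alternating}). The key observation is that each Moufang identity, after moving everything to one side, is exactly the statement that a certain combination of BiHom-associators vanishes; the twisting by powers of $\alpha$ and $\beta$ is arranged precisely so that all terms live in the ``alternating range'' of $as$. So the first step is to rewrite each side of (i)--(iii) using the definition $as_{\alpha,\beta}(a,b,c)=(ab)\beta(c)-\alpha(a)(bc)$ and the multiplicativity relations $\alpha\mu=\mu\alpha^{\otimes2}$, $\beta\mu=\mu\beta^{\otimes2}$ (and their inverses, available since $A$ is regular) to match the weights.

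For identity (i), I would introduce the shorthand that $as$ with arguments in the range $(\beta^2(-),\alpha\beta(-),\alpha^2(-))$ is alternating, and compute
$$\Big(\beta^3(x)(\beta^2(y)\alpha\beta(x))\Big)\alpha^2\beta^2(z)-\alpha\beta^3(x)\Big(\alpha\beta^2(y)(\alpha\beta(x)\alpha^2(z))\Big)$$
by inserting and cancelling the cross term $\alpha\beta^3(x)\big((\alpha\beta^2(y)\alpha\beta(x))\alpha^2(z)\big)$ — wait, more carefully one inserts a term that turns the difference into a sum of two BiHom-associators, one of the form $as(\beta^2(\beta(x)),\alpha\beta(\beta(y)),\alpha^2(\cdots))$ applied after a left-multiplication, and one of the form (left multiplication by $\alpha\beta^3(x)$) applied to $as(\beta^2(y),\alpha\beta(x),\alpha^2(z))$-type term; by left BiHom-alternativity (Lemma \ref{propriet ass BiH-A}, third identity, with the repeated argument $x$) the relevant associator with two equal slots vanishes, killing the difference. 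Identity (ii) is the mirror image: it is handled the same way but using right BiHom-alternativity, i.e.\ $as_{\alpha,\beta}(\beta^2(x),\beta\alpha(y),\alpha^2(x))=0$ again, after rewriting both sides to expose an associator whose first and last twisted arguments coincide. Identity (iii) is the ``middle'' Moufang identity; I expect to derive it by combining (i) and (ii), or directly by writing both sides as $(\alpha\beta^2(x)\,\alpha^2\beta(y))(\alpha^2\beta(z)\,\alpha^3(x))$ minus $(\alpha\beta^2(x)(\alpha\beta(y)\alpha^2(z)))\alpha^3\beta(x)$ and recognizing this as a linearized (polarized) form of the previous two — concretely, it should fall out of Lemma \ref{lem1} or of the linearization of $as(\beta^2(w),\alpha\beta(x),\alpha^2(y))$ in the slot occupied by $x$, exploiting that $as\circ(\beta^2\otimes\alpha\beta\otimes\alpha^2)$ is alternating so that swapping the two occurrences of $x$ introduces a sign that forces the combination to zero.

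The main obstacle I anticipate is purely bookkeeping: getting the powers of $\alpha$ and $\beta$ to line up correctly when one expands, reassociates, and then tries to read off a genuine $as_{\alpha,\beta}(\beta^2(-),\alpha\beta(-),\alpha^2(-))$. Because $\alpha$ and $\beta$ commute with $\mu$ only in the homogeneous degree-2 sense, each reassociation step forces me to apply $\alpha^{\pm1}$ or $\beta^{\pm1}$ to several factors at once, and an off-by-one in an exponent will make the ``alternating'' argument inapplicable. The safe route is to first verify, once and for all, the ``shifted'' form of left and right BiHom-alternativity that appears in Proposition \ref{propriete Bihom regular} and Lemma \ref{propriet ass BiH-A}, namely that for regular $A$ one has $as(\beta^2(a),\alpha\beta(b),\alpha^2(c))$ antisymmetric in any two of $a,b,c$; then every displayed identity in (i)--(iii) becomes a two- or three-line manipulation: expand both sides, cancel the common ``associative-looking'' monomials, and invoke antisymmetry with a repeated variable. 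I would present the proof of (i) in full, note that (ii) follows by the evident left-right duality of the BiHom-alternative axioms, and obtain (iii) from the linearization argument, leaving the remaining routine exponent-chasing to the reader.
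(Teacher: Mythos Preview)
Your plan has a real gap: the alternating property of $as\circ(\beta^2\otimes\alpha\beta\otimes\alpha^2)$ alone is not enough to prove any of (i)--(iii). After you insert the cross term in (i), the difference you must kill is
\[
as\big(\beta^3(x),\,\beta^2(y)\alpha\beta(x),\,\alpha^2\beta(z)\big)
\;+\;\alpha\beta^3(x)\cdot as\big(\beta^2(y),\alpha\beta(x),\alpha^2(z)\big),
\]
and neither summand has two equal slots in the sense of Lemma~\ref{propriet ass BiH-A}. The first associator carries the \emph{product} $\beta^2(y)\alpha\beta(x)$ in its middle slot, so permuting the three slots (which is all the alternating property gives you) never produces a repeated argument. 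The second summand is a left multiplication by $\alpha\beta^3(x)$ of an associator with three genuinely distinct arguments. The two terms are not negatives of each other under any slot permutation; relating them requires ``opening up'' the product inside the first associator, and that is precisely what Lemma~\ref{lem1} (the BiHom Teichm\"uller-type identity) does. The same obstruction appears verbatim in (ii) and (iii).

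The paper's proof handles this by passing through the BiHom--Bruck--Kleinfeld function $f$ defined in \eqref{f} and invoking Proposition~\ref{prop:f alternating}, which says $f$ is alternating. Concretely, after the same first two reassociation steps you describe, the remaining combination is rewritten (via the alternating property of $as\circ(\beta^2\otimes\alpha\beta\otimes\alpha^2)$ and the definition of $f$) as $f(y,x,z,x)+as(\beta^2(x),\alpha\beta(z),\alpha^2(x))\alpha^3\beta(y)$; the first term vanishes because $f$ is alternating with the repeated variable $x$, and the second vanishes by Lemma~\ref{propriet ass BiH-A}. Identities (ii) and (iii) are treated analogously, again producing an $f$ with a repeated argument. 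So the missing ingredient in your outline is exactly the alternating property of $f$ (equivalently, Lemma~\ref{lem1} combined with Lemma~\ref{lem2:homalt}); without it the ``two- or three-line manipulation'' you promise cannot close. Your bookkeeping worries about exponents are real but secondary---the substantive step you are missing is this four-variable alternating function, not merely the three-variable one.
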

 If $\alpha=\beta=id$, we get  the  Moufang identities in an alternative algebra.
\begin{proof}
(i)- We compute as follows
\begin{eqnarray*}
   & &\alpha\beta^3(x)\Big(\alpha\beta^2(y)\big(\alpha\beta(x)\alpha^2(z)\big)\Big) \\
   &=& \alpha\beta^3(x)\Big(\big(\beta^2(y)\alpha\beta(x)\big)\alpha^2\beta(z)\Big)-\alpha\beta^3(x)as\big(\beta^2(y),\alpha\beta(x),\alpha^2(z)\big)  \\
   &=& \Big(\beta^3(x)\big(\beta^2(y)\alpha\beta(x)\big)\Big)\alpha^2\beta^2(z)-as\big(\beta^3(x),\beta^2(y)\alpha\beta(x),\alpha^2\beta(z)\big)-
   \alpha\beta^3(x)as\big(\beta^2(y),\alpha\beta(x),\alpha^2(z)\big) \\
   &=& \Big(\beta^3(x)\big(\beta^2(y)\alpha\beta(x)\big)\Big)\alpha^2\beta^2(z)-
   \alpha^{-1}\beta\Big(as\big(\beta^2(\alpha(x)),\alpha\beta(y\alpha\beta^{-1}(x)),\alpha^2(\alpha(z))\big)\\
  & & \ \ \ \ \ \ \ \ \ \ \ \ \ \ \quad \qquad \qquad \qquad +\alpha^2\beta^2(x)as\big(\alpha\beta(y),\alpha^2(x),\alpha^3\beta^{-1}(z)\big)\Big) \\
   &=& \Big(\beta^3(x)\big(\beta^2(y)\alpha\beta(x)\big)\Big)\alpha^2\beta^2(z)-
  \alpha^{-1}\beta\Big(as\big(\beta^2(y)\alpha\beta(x),\alpha^2\beta(z),\alpha^3(x)\big)\\
  & & \ \ \ \ \ \ \ \ \ \ \ \ \ \ \quad \qquad \qquad \qquad -\alpha^2\beta^2(x)as\big(\alpha\beta(y),\alpha^2(z),\alpha^3\beta^{-1}(x)\big)\Big) \\
   &=&  \Big(\beta^3(x)\big(\beta^2(y)\alpha\beta(x)\big)\Big)\alpha^2\beta^2(z)-  \alpha^{-1}\beta\Big(f(y,x,z,x)+as\big(\beta^2(x),\alpha\beta(z),\alpha^2(x)\big)\alpha^3\beta(y)\Big)\\
   &=&  \Big(\beta^3(x)\big(\beta^2(y)\alpha\beta(x)\big)\Big)\alpha^2\beta^2(z) \quad (\text{Since $f$ is alternating and $A$ is BiHom-alternative})
\end{eqnarray*}

(ii)- For any $x,y,z \in A$, we have
\begin{eqnarray*}
   & &  \Big(\big(\beta^2(z)\alpha\beta(x)\big)\alpha^2\beta(y)\Big)\alpha^3\beta(x) \\
   &=& as\big(\beta^2(z),\alpha\beta(x),\alpha^2(y)\big)\alpha^3\beta(x)+ \Big(\alpha\beta^2(z)\big(\alpha\beta(x)\alpha^2(y)\big)\Big)\alpha^3\beta(x) \\
   &=& as\big(\beta^2(z),\alpha\beta(x),\alpha^2(y)\big)\alpha^3\beta(x)+ as\big(\alpha\beta^2(z),\alpha\beta(x)\alpha^2(y),\alpha^3(x)\big)+
   \alpha^2\beta^2(z)\Big( \big( \alpha\beta(x)\alpha^2(y) \big) \alpha^3(x) \Big)   \\
   &=& as\big(\beta^2(z),\alpha\beta(x),\alpha^2(y)\big)\alpha^3\beta(x)+ as\big(\beta^2(\alpha(z)),\alpha\beta(x\alpha\beta^{-1}(y)),\alpha^2(\alpha(x))\big)+
    \alpha^2\beta^2(z)\Big( \big( \alpha\beta(x)\alpha^2(y) \big) \alpha^3(x) \Big) \\
   &=&  -as\big(\beta^2(y),\alpha\beta(x),\alpha^2(z)\big)\alpha^3\beta(x)+
   as\big(\beta^2(x)\alpha\beta(y),\alpha^2\beta(x),\alpha^3(z)\big)+
   \alpha^2\beta^2(z)\Big( \big( \alpha\beta(x)\alpha^2(y) \big) \alpha^3(x) \Big)  \\
   &=& f(x,y,x,z)+  \alpha^2\beta^2(y)as\big(\alpha\beta(x),\alpha^2(x),\alpha^3\beta^{-1}(z)\big) +\alpha^2\beta^2(z)\Big( \big( \alpha\beta(x)\alpha^2(y) \big) \alpha^3(x) \Big)  \quad (\text{Definition \ref{f}} )\\
   &=& as\big(\beta^2(\alpha\beta^{-1}(x)),\alpha\beta(\alpha\beta^{-1}(x)),\alpha^2(\alpha\beta^{-1}(z))\big)+
   \alpha^2\beta^2(z)\Big( \big( \alpha\beta(x)\alpha^2(y) \big) \alpha^3(x) \Big)  \quad (\text{by Proposition \ref{prop:f alternating}}) \\
   &=&  \alpha^2\beta^2(z)\Big( \big( \alpha\beta(x)\alpha^2(y) \big) \alpha^3(x) \Big)  \quad (\text{since $ A$ is BiHom-alternative})
\end{eqnarray*}
This prove the identity (ii).

(iii)- For the third identity, we compute as follows
\begin{eqnarray*}
   & & \Big(\alpha\beta^2(x)\alpha^2\beta(y)\Big)\Big(\alpha^2\beta(z)\alpha^3(x)\Big) \\ && =\Big(\big(\beta^2(x)\alpha\beta(y)\big)\alpha^2\beta(z)\Big)\alpha^3\beta(x)-
   as\big(\beta^2(x)\alpha\beta(y),\alpha^2\beta(z),\alpha^3(x)\big) \\
   &&= \Big(\big(\beta^2(x)\alpha\beta(y)\big)\alpha^2\beta(z)\Big)\alpha^3\beta(x)-f(x,y,z,x) \\
   &&-as\big(\beta^2(y),\alpha\beta(z),\alpha^2(x)\big)\alpha^3\beta(x)-\alpha^2\beta^2(y)as\big(\alpha\beta(x),\alpha^2(z),\alpha^3\beta^{-1}(x)\big)  \\
   &&=  \Big(\big(\beta^2(x)\alpha\beta(y)\big)\alpha^2\beta(z)\Big)\alpha^3\beta(x)-as\big(\beta^2(y),\alpha\beta(z),\alpha^2(x)\big)\alpha^3\beta(x) \\
   &&=  \Big(\big(\beta^2(x)\alpha\beta(y)\big)\alpha^2\beta(z)\Big)\alpha^3\beta(x)-as\big(\beta^2(x),\alpha\beta(y),\alpha^2(z)\big)\alpha^3\beta(x)  \\
   &&=  \Big(\alpha\beta^2(x)\big(\alpha\beta(y)\alpha^2(z)\big)\Big)\alpha^3\beta(x)
\end{eqnarray*}

\end{proof}

Next we provide further properties of the BiHom-Bruck-Kleinfeld function $f$ \eqref{f}.  The following result gives two characterizations of the BiHom-Bruck-Kleinfeld function in a BiHom-alternative algebra.  It is the BiHom-type analogue of part in \cite{bk} (Lemma 2.1).

\begin{prop}
\label{prop:f2}
Let $(A,\mu,\alpha,\beta)$ be a regular  BiHom-alternative algebra.  Then the BiHom-Bruck-Kleinfeld function $f$ satisfies
\begin{equation}
\label{f'}
f = \frac{1}{3}F = as \circ \Big(\big([\cdot,\cdot]\circ(\beta^2\otimes \alpha\beta)\big)\otimes \alpha^2\beta \otimes \alpha^3\Big) \circ (id + \zeta),
\end{equation}
where $F$ is defined in \eqref{F} and $\zeta$ is the permutation $\zeta(w \otimes x \otimes y \otimes z) = y \otimes z \otimes w \otimes x$
and  $[\cdot,\cdot] = \mu \circ (id - (\alpha^{-1}\beta\otimes \alpha\beta^{-1})\circ \tau)$ is the commutator bracket of $\mu$.
\end{prop}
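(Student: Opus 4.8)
The statement contains two assertions: the scalar identity $f=\tfrac13F$, and the ``associator of a commutator'' formula for $f$. The first is immediate from results already at hand. By Lemma~\ref{lem2:homalt}, $F=f\circ(id-\rho+\rho^{2})$ where $\rho$ is the $4$-cycle $\rho(w\otimes x\otimes y\otimes z)=x\otimes y\otimes z\otimes w$, and by Proposition~\ref{prop:f alternating}, $f$ is alternating. Since $\rho$ is a $4$-cycle, $\mathrm{sign}(\rho)=-1$, so $f\circ\rho=-f$ and $f\circ\rho^{2}=f$; hence $F=f-(-f)+f=3f$, that is $f=\tfrac13F$.

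For the second equality the plan is to reduce both sides to the same alternating sum of BiHom-associators whose first argument is a product of two twisted letters. Write $\Phi(a,b,c,d):=as\big(\mu(\beta^{2}(a),\alpha\beta(b)),\alpha^{2}\beta(c),\alpha^{3}(d)\big)$; this is the leading term of both $f(a,b,c,d)$ and $H(a,b,c,d)$, and the definition of $H$ reads $H(a,b,c,d)=\Phi(a,b,c,d)-\Phi(b,c,d,a)+\Phi(c,d,a,b)$. Expanding the commutator on the right of \eqref{f'} through $[p,q]=\mu(p,q)-\mu(\alpha^{-1}\beta(q),\alpha\beta^{-1}(p))$ gives $[\beta^{2}(w),\alpha\beta(x)]=\mu(\beta^{2}(w),\alpha\beta(x))-\mu(\beta^{2}(x),\alpha\beta(w))$, so the right-hand side of \eqref{f'} equals $\Phi(w,x,y,z)-\Phi(x,w,y,z)+\Phi(y,z,w,x)-\Phi(z,y,w,x)$. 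On the other hand, because $as\circ(\beta^{2}\otimes\alpha\beta\otimes\alpha^{2})$ is alternating, hence cyclically invariant, a relabelling of Lemma~\ref{lem1} yields $f(w,x,y,z)=\Phi(w,x,y,z)-H(x,y,z,w)$, and substituting the formula for $H$ gives $f(w,x,y,z)=\Phi(w,x,y,z)-\Phi(x,y,z,w)+\Phi(y,z,w,x)-\Phi(z,w,x,y)$.

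Subtracting these two descriptions, the second equality is reduced to the single identity
\[
\Phi(x,w,y,z)+\Phi(z,y,w,x)=\Phi(x,y,z,w)+\Phi(z,w,x,y).
\]
To prove it I will replace each $\Phi$-term by an equivalent form in which the inner product is carried into the second or the third slot of the BiHom-associator, namely $\Phi(a,b,c,d)=as(\alpha\beta^{2}(d),\mu(\alpha\beta(a),\alpha^{2}(b)),\alpha^{3}(c))=as(\alpha\beta^{2}(c),\alpha^{2}\beta(d),\mu(\alpha^{2}(a),\alpha^{3}\beta^{-1}(b)))$; these are exactly the rewritings produced in the proof of Lemma~\ref{lem1} from the cyclic invariance of $as\circ(\beta^{2}\otimes\alpha\beta\otimes\alpha^{2})$. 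After expanding the two products inside each associator and simplifying with the multiplicativity of $\alpha$ and $\beta$, the resulting terms on the two sides cancel and both collapse to the same element of $A$. This final cancellation --- keeping track of the twists as the inner product migrates among the three slots of the BiHom-associator --- is the only genuine obstacle, and it is also the step where the BiHom-alternative hypothesis is really used (through Lemma~\ref{lem1} and the alternating property of $as\circ(\beta^{2}\otimes\alpha\beta\otimes\alpha^{2})$); everything else is formal.
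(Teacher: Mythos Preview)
Your argument for the first equality $f=\tfrac13F$ is correct and matches the paper's: both simply combine Lemma~\ref{lem2:homalt} with the alternating property of $f$.

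For the second equality your route differs from the paper's, and the difference matters. You introduce $\Phi(a,b,c,d)=as(\mu(\beta^{2}(a),\alpha\beta(b)),\alpha^{2}\beta(c),\alpha^{3}(d))$, correctly expand the right-hand side of \eqref{f'} as $\Phi(w,x,y,z)-\Phi(x,w,y,z)+\Phi(y,z,w,x)-\Phi(z,y,w,x)$, and correctly derive (via Lemma~\ref{lem1} and cyclic invariance) the pleasant formula $f(w,x,y,z)=\Phi(w,x,y,z)-\Phi(x,y,z,w)+\Phi(y,z,w,x)-\Phi(z,w,x,y)$. Reducing to the four-term identity $\Phi(x,w,y,z)+\Phi(z,y,w,x)=\Phi(x,y,z,w)+\Phi(z,w,x,y)$ is all fine. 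The gap is in your last paragraph: you assert that moving the inner product among the three slots of the BiHom-associator and then ``expanding the two products inside each associator and simplifying with the multiplicativity of $\alpha$ and $\beta$'' makes both sides collapse to the same element. I do not see that this works. The three rewritings of $\Phi$ you invoke only give the single symmetry $\Phi(a,b,c,d)=-\Phi(a,b,d,c)$; beyond that, a brute-force expansion into eight products on each side produces terms with genuinely different bracketings, and multiplicativity of $\alpha,\beta$ alone cannot match them. Some further use of the BiHom-alternative identities (beyond what is encoded in the rewritings) is needed, and you have not supplied it.

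The paper avoids this difficulty entirely. Instead of expressing $f$ through $\Phi$, it computes $2f(w,x,y,z)=f(w,x,y,z)-f(x,w,y,z)$ directly from the definition \eqref{f}. In that expansion the terms $\Phi(w,x,y,z)-\Phi(x,w,y,z)$ appear immediately, while the remaining four terms pair off into the commutator brackets $[as(\beta^{2}(x),\alpha\beta(y),\alpha^{2}(z)),\alpha^{3}\beta(w)]$ and $[as(\beta^{2}(w),\alpha\beta(y),\alpha^{2}(z)),\alpha^{3}\beta(x)]$. Applying $\zeta$ and adding, the bracket terms are recognized (via Lemma~\ref{propriet ass BiH-A}) as exactly $-F(w,x,y,z)$, so the sum equals $4f-F=f$ by the first equality. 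This is short and uses only the definition of $f$, the alternating property, and the already-proved relation $F=3f$; no auxiliary $\Phi$-identity is required. If you want to salvage your approach, the cleanest fix is to prove your four-term identity \emph{a posteriori} from $F=3f$ together with your formula $f=\Phi\circ(id-\rho+\rho^{2}-\rho^{3})$, but then the argument is no longer independent of the paper's computation.
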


\begin{proof}
First note that
\[
f = -f\circ\rho = f\circ\rho^2,
\]
because $f$ is alternating (Proposition \ref{prop:f alternating}), where $\rho = \xi^3$ is the cyclic permutation $\rho(w \otimes x \otimes y \otimes z) = x \otimes y \otimes z \otimes w$.  Therefore, we have
\[
\begin{split}
F &= f \circ (id - \rho + \rho^2) \quad\text{(by Lemma \ref{lem2:homalt})}\\
&= 3f,
\end{split}
\]
which proves the first equality in \eqref{f'}.  It remains to prove that $f$ is equal to the last entry in \eqref{f'}.

Since $f$ is alternating, from its definition \eqref{f} we have
\begin{eqnarray*}
& & 2f(w,x,y,z)=  f(w,x,y,z)-f(x,w,y,z) \\
   &=&as(\beta^2(w)\alpha\beta(x),\alpha^2\beta(y),\alpha^3(z))    - as(\beta^2(x),\alpha\beta(y),\alpha^2(z))\alpha^3\beta(w) \\
    & -& \alpha^2\beta^2(x)as(\alpha\beta(w),\alpha^2(y),\alpha^3\beta^{-1}(z)) -
    as(\beta^2(x)\alpha\beta(w),\alpha^2\beta(y),\alpha^3(z))\\
   &+&  as(\beta^2(w),\alpha\beta(y),\alpha^2(z))\alpha^3\beta(x)
   + \alpha^2\beta^2(w)as(\alpha\beta(x),\alpha^2(y),\alpha^3\beta^{-1}(z))
\end{eqnarray*}
which implies that
\begin{eqnarray}
\label{f=1/3F}
   & & as\Big(\big[\beta^2(w)\alpha\beta(x)\big],\alpha^2\beta(y),\alpha^3(z)\Big) =  2f(w,x,y,z)+ \nonumber\\
& &  \big[as(\beta^2(x),\alpha\beta(y),\alpha^2(z)),\alpha^3\beta(w)\big]-
\big[as(\beta^2(w),\alpha\beta(y),\alpha^2(z)),\alpha^3\beta(x)\big].
\end{eqnarray}
Now, interchanging $(w,x)$ with $(y,z)$ in the last equality, one has
\begin{eqnarray}
\label{f=1:3F}
   & & as\Big(\big[\beta^2(y)\alpha\beta(z)\big],\alpha^2\beta(w),\alpha^3(x)\Big) =  2f(y,z,w,x)+ \nonumber\\
& &  \big[as(\beta^2(z),\alpha\beta(w),\alpha^2(x)),\alpha^3\beta(y)\big]-
\big[as(\beta^2(y),\alpha\beta(w),\alpha^2(x)),\alpha^3\beta(z)\big] \nonumber \\
   &=&  2f(w,x,y,z)+\big[as(\beta^2(z),\alpha\beta(w),\alpha^2(x)),\alpha^3\beta(y)\big] \nonumber\\
&-& \big[as(\beta^2(y),\alpha\beta(w),\alpha^2(x)),\alpha^3\beta(z)\big].
\end{eqnarray}
Adding (\ref{f=1/3F}) and (\ref{f=1:3F}) we have
\begin{eqnarray*}\nonumber
   & & as \circ \Big(\big([\cdot,\cdot]\circ(\beta^2\otimes \alpha\beta)\big)\otimes \alpha^2\beta \otimes \alpha^3\Big) \circ (id + \zeta)(w\otimes x\otimes y \otimes z) \\
   &=& as\Big(\big[\beta^2(w)\alpha\beta(x)\big],\alpha^2\beta(y),\alpha^3(z)\Big) + as\Big(\big[\beta^2(y)\alpha\beta(z)\big],\alpha^2\beta(w),\alpha^3(x)\Big) \\
   &=& (4f-F)(w,x,y,z) \quad (\text{by Definition \ref{F} and Lemma \ref{propriet ass BiH-A}}) \\
   &=&  f(w,x,y,z) \quad (\text{since $F=3f$}).
\end{eqnarray*}
 This proves that $f$ is equal to the last entry in \eqref{f'}.
\end{proof}


\section{BiHom-alternative algebras are BiHom-Jordan-admissible}
\label{sec:jordan}
In this section, we define BiHom-Jordan(-admissible) algebras. The main result of this section is Theorem \ref{jordan admissible}, which says that BiHom-alternative algebras are BiHom-Jordan-admissible.  Then we give construction results for BiHom-Jordan and BiHom-Jordan-admissible algebras.


\begin{df}\label{BiHom jordan}
A BiHom algebra $(A,\mu,\alpha,\beta)$ is called a BiHom-Jordan algebra if:
\begin{itemize}
  \item [(i)] $\alpha   \beta = \beta   \alpha$,
  \item [(ii)] $\mu \Big(\beta(x),\alpha(y)\Big)=\mu\Big(\beta(y),\alpha(x)\Big),\ \forall x,y \in A$ (BiHom-commutativity condition),
  \item [(iii)] $as_{\alpha,\beta}\Big(\mu\big(\beta^2(x),\alpha\beta(x)\big),\alpha^2\beta(y),\alpha^3(x)\Big)=0,\ \forall x,y \in A$.(BiHom-Jordan identity)
\end{itemize}
\end{df}
Note that if $\alpha=\beta=id$, we obtain a Jordan algebra. Then we conclude that if $(A,\mu)$ is a Jordan algebra, $(A,\mu,id,id)$ can be viewed as a BiHom-Jordan algebra.

The BiHom-Jordan identity can be writen as:  $$\circlearrowleft_{x,w,z}as_{\alpha,\beta}\Big(\mu\big(\beta^2(x),\alpha\beta(w)\big),\alpha^2\beta(y),\alpha^3(z)\Big)=0.$$

\begin{df}
\label{def:plushom}
Let $(A,\mu,\alpha,\beta)$ be a BiHom-algebra.  Define its plus BiHom-algebras as the BiHom-algebra $A^+ = (A,\ast,\alpha,\beta)$, where
$$x\ast y=\frac{1}{2}\Big( \mu(x,y)+\mu\big(\alpha^{-1}\beta(y),\alpha\beta^{-1}(x)\big)\Big).$$
\end{df}
Note that product $\ast$ is BiHom-commutative. Indeed, given $x,y \in A$, then we have
$$\beta(x)\ast\alpha(y)  =\displaystyle \frac{1}{2}\Big( \beta(x)\alpha(y)+\beta(y)\alpha(x)\Big)
   = \displaystyle \frac{1}{2} \Big(\beta(y)\alpha(x)+\beta(x)\alpha(y)\Big)=\beta(y)\ast\alpha(x).$$

A BiHom-Jordan-admissible algebra is a BiHom-algebra $(A,\mu,\alpha,\beta)$ whose plus BiHom-algebra $A^+ = (A,\ast,\alpha,\beta)$ is a BiHom-Jordan algebra.

\begin{thm}
Let $(A,\mu)$ be a Jordan algebra and let $\alpha, \beta: A\rightarrow A$ be two commuting algebra morphisms. Then $(A,\mu'=\mu\circ(\alpha\otimes \beta),\alpha,\beta)$ is a BiHom-Jordan algebra.

\end{thm}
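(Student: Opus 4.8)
The plan is to verify the three defining conditions of a BiHom-Jordan algebra (Definition \ref{BiHom jordan}) for the quadruple $(A,\mu',\alpha,\beta)$ with $\mu'=\mu\circ(\alpha\otimes\beta)$, writing $\mu(x,y)=xy$ and $\mu'(x,y)=x\ast y=\alpha(x)\beta(y)$. Condition (i), $\alpha\beta=\beta\alpha$, is an assumption. I would also first record that $\mu'$ is multiplicative with respect to $\alpha$ and $\beta$: since $\alpha,\beta$ are algebra morphisms of $(A,\mu)$ and commute, $\alpha(x\ast y)=\alpha(\alpha(x)\beta(y))=\alpha^2(x)\alpha\beta(y)=\alpha(x)\ast\alpha(y)$, and similarly for $\beta$. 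This is the same routine check as in the proof of Theorem \ref{BiHom-Malcev  construct} and in Theorem \ref{induced A}.

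Next I would check BiHom-commutativity, condition (ii): $\beta(x)\ast\alpha(y)=\alpha\beta(x)\beta\alpha(y)=\alpha\beta(x)\alpha\beta(y)$, and using commutativity of the Jordan product $\mu$ this equals $\alpha\beta(y)\alpha\beta(x)=\beta(y)\ast\alpha(x)$, as desired. The only substantive step is condition (iii), the BiHom-Jordan identity $as_{\alpha,\beta}(\mu'(\beta^2(x),\alpha\beta(x)),\alpha^2\beta(y),\alpha^3(x))=0$. The strategy is the standard twisting computation: expand the BiHom-associator $as_{\alpha,\beta}$ of $\mu'$ in terms of $\mu'$, then expand each occurrence of $\ast$ in terms of $\mu$, push all the structure maps through $\mu$ using multiplicativity of $\alpha$ and $\beta$, and show the whole expression collapses to $\alpha^{N}\beta^{M}$ applied to the ordinary Jordan associator $as(\mu(x^2,x),x\cdot y,x)$ (in appropriate twisted arguments), which vanishes because $(A,\mu)$ is a Jordan algebra. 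Here one uses that $\mu'(\beta^2(x),\alpha\beta(x))=\alpha\beta^2(x)\,\alpha\beta^2(x)$, essentially the square of $\alpha\beta^2(x)$, matching the shape of the classical Jordan identity.

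Concretely, $as_{\alpha,\beta}(a,b,c)$ for $\mu'$ is $(a\ast b)\ast\beta(c)-\alpha(a)\ast(b\ast c)$; with $a=\mu'(\beta^2(x),\alpha\beta(x))$, $b=\alpha^2\beta(y)$, $c=\alpha^3(x)$, each factor becomes a word in $\mu$ with powers of $\alpha,\beta$ that I would track carefully. After moving all morphisms outward (legitimate since $\alpha\mu=\mu\alpha^{\otimes2}$, $\beta\mu=\mu\beta^{\otimes2}$, $\alpha\beta=\beta\alpha$), the bracketing of the two terms matches the two terms of the classical associator $as(x^2\cdot x,\,x\cdot y,\,x)$ evaluated on suitably $\alpha,\beta$-twisted inputs, so the difference is $\alpha^{i}\beta^{j}$ applied to $as(\mu(\mu(x,x),x),\mu(x,y),x)=0$. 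The main obstacle is purely clerical: keeping the exponents of $\alpha$ and $\beta$ correct on every one of the (several) nested factors so that the two terms genuinely have the same twisting and cancel against the classical Jordan identity; there is no conceptual difficulty beyond that bookkeeping. The morphism-compatibility part of the statement (that a morphism of Jordan algebras commuting with the structure maps induces a morphism of the associated BiHom-Jordan algebras) would be checked exactly as in Theorem \ref{induced A}: $f\mu'=f\mu(\alpha\otimes\beta)=\mu'(f\otimes f)(\alpha\otimes\beta)=\mu'(\alpha'\otimes\beta')(f\otimes f)=\mu''_{\alpha',\beta'}(f\otimes f)$.
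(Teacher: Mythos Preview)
Your approach is essentially the same as the paper's: verify BiHom-commutativity from commutativity of $\mu$ together with $\alpha\beta=\beta\alpha$, then expand the BiHom-associator of $\mu'$ and push $\alpha,\beta$ outward to reduce to the classical Jordan identity. One correction to your bookkeeping: the expression you actually reach is $\alpha^{3}\beta^{2}\big(as(\mu(x,x),y,x)\big)$, i.e.\ the ordinary Jordan associator $as(x^{2},y,x)$, not $as(x^{3},xy,x)$ as you anticipate. You have overcounted one layer of nesting: $\mu'(\beta^{2}(x),\alpha\beta(x))=(\alpha\beta^{2}(x))^{2}$ already supplies the square, and the two remaining occurrences of $\ast$ in $as_{\alpha,\beta}$ supply exactly the two products of the associator, so three $\mu$'s in total, not five. (Also, the theorem as stated contains no morphism-compatibility clause, so your final paragraph is unnecessary here.)
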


\begin{proof}
First, note that $\mu$ is commutative since $A$ is a Jordan algebra. Let $x,y \in A$, then
$$\mu'(\beta(x),\alpha(y))=\mu(\alpha\beta(x),\alpha\beta(y))=\mu(\alpha\beta(y),\alpha\beta(x))=\mu'(\beta(y),\alpha(x)).$$
On the other hand,
\begin{eqnarray*}
  & &as_{\alpha,\beta}\Big(\mu'\big(\beta^2(x),\alpha\beta(x)\big),\alpha^2\beta(y),\alpha^3(x)\Big) =as_{\alpha,\beta}\Big(\mu\big(\alpha\beta^2(x),\alpha\beta^2(x)\big),\alpha^2\beta(y),\alpha^3(x)\Big) \\
   &=&\mu'\Big( \mu'\big( \mu\big(\alpha\beta^2(x),\alpha\beta^2(x)\big),\alpha^2\beta(y)\big),\alpha^3\beta(x)\Big)
   -\mu'\Big( \mu\big(\alpha^2\beta^2(x),\alpha^2\beta^2(x)\big),\mu'\big(\alpha^2\beta(y),\alpha^3(x)\big)\Big)\\
   &=& \mu'\Big( \mu\big( \mu\big(\alpha^2\beta^2(x),\alpha^2\beta^2(x)\big),\alpha^2\beta^2(y)\big),\alpha^3\beta(x)\Big)
    -\mu'\Big( \mu\big(\alpha^2\beta^2(x),\alpha^2\beta^2(x)\big),\mu\big(\alpha^3\beta(y),\alpha^3\beta(x)\big)\Big)\\
   &=&\mu\Big( \mu\big( \mu\big(\alpha^3\beta^2(x),\alpha^3\beta^2(x)\big),\alpha^3\beta^2(y)\big),\alpha^3\beta^2(x)\Big)
   -\mu\Big( \mu\big(\alpha^3\beta^2(x),\alpha^3\beta^2(x)\big),\mu\big(\alpha^3\beta^2(y),\alpha^3\beta^2(x)\big)\Big)\\
   &=& \alpha^3\beta^2 \Big(as(\mu(x,x),y,x)\Big)=0.
\end{eqnarray*}
Therefore  $(A,\mu'=\mu\circ(\alpha\otimes \beta),\alpha,\beta)$ is a BiHom-Jordan algebra.

\end{proof}
\begin{rem}
Similarly to previous theorem, one may construct a new BiHom-Jordan algebra starting with a given BiHom-Jordan algebra and a pair of commuting BiHom-algebra morphisms.
\end{rem}

Now, we give the main result of this section

\begin{thm}
\label{jordan admissible}
Every BiHom-alternative algebra is BiHom-Jordan-admissible.
\end{thm}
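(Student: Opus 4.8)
The plan is to verify the three conditions of Definition~\ref{BiHom jordan} for the plus BiHom-algebra $A^{+}=(A,\ast,\alpha,\beta)$ of Definition~\ref{def:plushom}. Conditions~(i) and~(ii) come for free: $\alpha\beta=\beta\alpha$ is inherited from $A$, and the BiHom-commutativity of $\ast$ was recorded right after Definition~\ref{def:plushom}. (Since $\ast$ is built from $\alpha^{-1}$ and $\beta^{-1}$, we are tacitly in the regular case.) So everything reduces to condition~(iii): writing $as^{\ast}$ for the BiHom-associator of $\ast$, we must show
\[
as^{\ast}_{\alpha,\beta}\Big(\beta^{2}(x)\ast\alpha\beta(x),\ \alpha^{2}\beta(y),\ \alpha^{3}(x)\Big)=0\qquad(x,y\in A),
\]
or equivalently, by the polarization remark following Definition~\ref{BiHom jordan}, the cyclic-sum form $\circlearrowleft_{x,w,z}\,as^{\ast}_{\alpha,\beta}\big(\beta^{2}(x)\ast\alpha\beta(w),\alpha^{2}\beta(y),\alpha^{3}(z)\big)=0$.

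The first useful observation is that the symmetrization defining $\ast$ is inert on the diagonal: since $\alpha^{-1}\beta(\alpha\beta(x))=\beta^{2}(x)$ and $\alpha\beta^{-1}(\beta^{2}(x))=\alpha\beta(x)$, one has $\beta^{2}(x)\ast\alpha\beta(x)=\mu(\beta^{2}(x),\alpha\beta(x))$. Substituting $\ast=\tfrac12\big(\mu+\mu\circ(\alpha^{-1}\beta\otimes\alpha\beta^{-1})\circ\tau\big)$ into $as^{\ast}_{\alpha,\beta}\big(\mu(\beta^{2}(x),\alpha\beta(x)),\alpha^{2}\beta(y),\alpha^{3}(x)\big)$ and using $\alpha\mu=\mu\alpha^{\otimes2}$, $\beta\mu=\mu\beta^{\otimes2}$ together with the invertibility of $\alpha$ and $\beta$, I would expand this into $\mu$-bracketings of elements of $A$ and regroup them into $\mu$-associators $as_{\alpha,\beta}$, pushing the powers of $\alpha^{\pm1},\beta^{\pm1}$ around so that every associator occurs in the normalized weighting $(\beta^{2},\alpha\beta,\alpha^{2})$ in which Lemma~\ref{propriet ass BiH-A} and the alternating property of $as\circ(\beta^{2}\otimes\alpha\beta\otimes\alpha^{2})$ apply.

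From there the cancellation is modeled on the proof of Theorem~\ref{BiHom-Malcev  admissible}. Whenever an associator has a product $\beta^{2}(a)\alpha\beta(b)$ in its first slot, the definition~\eqref{f} of the BiHom-Bruck-Kleinfeld function lets me trade it for a value of $f$ plus two associators with unfactored slots; the three BiHom-Moufang identities of Theorem~\ref{BiHom moufang} serve to normalize the two-element words (products of $x$ with an $xy$-word) that arise, exactly as in their proof. Since the whole expression is cubic in $x$ and linear in $y$, every $f$-value produced this way has $x$ in at least two of its four arguments and so vanishes, $f$ being alternating (Proposition~\ref{prop:f alternating}); likewise every normalized associator $as(\beta^{2}(\cdot),\alpha\beta(\cdot),\alpha^{2}(\cdot))$ that survives has $x$ in at least two of its three slots and vanishes by the alternating property (or by Lemma~\ref{propriet ass BiH-A} in the remaining $x,y,x$ pattern). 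The only terms that do not die individually are genuinely cubic in $x$, and these match (specializations of) the two sides of the BiHom-Moufang identities~(i)--(iii), so they cancel in pairs; summing up yields $as^{\ast}_{\alpha,\beta}\big(\mu(\beta^{2}(x),\alpha\beta(x)),\alpha^{2}\beta(y),\alpha^{3}(x)\big)=0$.

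The hard part is not conceptual but organizational: the expansion produces many twisted monomials, and before one may invoke Theorem~\ref{BiHom moufang}, Lemma~\ref{propriet ass BiH-A} or Proposition~\ref{prop:f alternating} each associator and each two-element word must be brought into exactly the weighting those statements require, with signs tracked carefully. No structural input beyond what has already been established --- the alternating property of $as\circ(\beta^{2}\otimes\alpha\beta\otimes\alpha^{2})$, that of $f$, and the BiHom-Moufang identities --- is needed; once the normalization is carried out the vanishing is forced.
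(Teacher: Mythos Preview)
Your outline is correct and tracks the paper's argument closely: the paper, too, notes $\beta^{2}(x)\ast\alpha\beta(x)=\mu(\beta^{2}(x),\alpha\beta(x))$, expands $4\,as_{A^{+}}$ into eight $\mu$-monomials, and then kills them using nothing but the alternating property of $as\circ(\beta^{2}\otimes\alpha\beta\otimes\alpha^{2})$ and of $f$. The only organizational difference is that the paper packages the cancellations into a short preliminary lemma (Lemma~\ref{lem thm jordan admis}) with four identities, one for each pair of the eight monomials, rather than invoking Theorem~\ref{BiHom moufang}: the relevant pairs are not literally specializations of the Moufang identities but are more direct consequences of $f(y,x,x,x)=f(x,x,y,x)=0$ together with the right/left BiHom-alternative identities. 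Your route through Moufang would ultimately work since those identities themselves rest on the same alternating facts, but the paper's Lemma~\ref{lem thm jordan admis} gives the cancellations with fewer intermediate steps and avoids the extra normalization you anticipate.
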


To prove Theorem \ref{jordan admissible}, we will use the following preliminary observations.

\begin{lem}
\label{lem thm jordan admis}
Let $(A,\mu,\alpha,\beta)$ be a regular BiHom-alternative algebra. Then, for each $x,y \in A$ we have  the following statements
\begin{itemize}
  \item [(i)]  $as\big(\beta^2(x)\alpha\beta(x), \alpha^2\beta(y),\alpha^3(x)\big)=0$.
  \item [(ii)] $\Big(\alpha\beta^2(y)\big(\alpha\beta(x)\alpha^2(x)\big)\Big)\alpha^3\beta(x)=
      \Big(\alpha\beta^2(y)\alpha^2\beta(x)\Big)\Big(\alpha^2\beta(x)\alpha^3(x)\Big)$.
  \item [(iii)] $\alpha^2\beta^2(x)\Big(\big(\alpha\beta(x)\alpha^2(x)\big)\alpha^3(y)\Big)=
      \Big(\alpha\beta^2(x)\alpha^2\beta(x)\Big)\Big(\alpha^2\beta(x)\alpha^3(y)\Big)$.
  \item [(iv)]  $\alpha^2\beta^2(x)\Big(\alpha^2\beta(y)\big(\alpha^2(x)\alpha^3\beta^{-1}(x)\big)\Big)=
      \Big(\alpha\beta^2(x)\alpha^2\beta(y)\Big)\Big(\alpha^2\beta(x)\alpha^3(x)\Big)$.
\end{itemize}

\end{lem}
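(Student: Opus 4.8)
The plan is to treat part (i) by a small observation about the BiHom-Bruck-Kleinfeld function of \eqref{f}, and parts (ii)--(iv) by peeling off one subword at a time and rewriting it with a left or right BiHom-alternative identity (using BiHom-flexibility in addition for (iv)). None of these uses a genuinely new idea; each part mirrors a standard alternative-algebra manipulation.

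For (i): since $A$ is regular BiHom-alternative, $f$ is alternating (Proposition~\ref{prop:f alternating}), and as the first, second and fourth arguments of $f(x,x,y,x)$ coincide we get $f(x,x,y,x)=0$. Expanding the right-hand side via \eqref{f},
\[
\begin{aligned}
f(x,x,y,x)={}&as(\beta^2(x)\alpha\beta(x),\alpha^2\beta(y),\alpha^3(x))\\
&-as(\beta^2(x),\alpha\beta(y),\alpha^2(x))\,\alpha^3\beta(x)\\
&-\alpha^2\beta^2(x)\,as(\alpha\beta(x),\alpha^2(y),\alpha^3\beta^{-1}(x)).
\end{aligned}
\]
The last two associators vanish: after pulling out the twist maps they are $(as\circ(\beta^2\otimes\alpha\beta\otimes\alpha^2))(x,y,x)$ and $(as\circ(\beta^2\otimes\alpha\beta\otimes\alpha^2))(\alpha\beta^{-1}(x),\alpha\beta^{-1}(y),\alpha\beta^{-1}(x))$, each a triple with a repeated entry, hence zero since $as\circ(\beta^2\otimes\alpha\beta\otimes\alpha^2)$ is alternating in a regular BiHom-alternative algebra (the Remark following Lemma~\ref{lem:alternating}). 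Thus $as(\beta^2(x)\alpha\beta(x),\alpha^2\beta(y),\alpha^3(x))=f(x,x,y,x)=0$, which is (i).

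Parts (ii)--(iv) are the BiHom-twisted analogues of the alternative-algebra reassociations $(yx^2)x=(yx)x^2$, $x(x^2y)=x^2(xy)$ and $x(yx^2)=(xy)x^2$. For (ii): writing $\alpha\beta(x)\alpha^2(x)=\beta(b)\alpha(b)$ with $b=\alpha(x)$, the right BiHom-alternative identity \eqref{sa2} (which with equal middle arguments reads $\alpha(a)(\beta(b)\alpha(b))=(a\beta(b))\alpha\beta(b)$) turns $\alpha\beta^2(y)(\alpha\beta(x)\alpha^2(x))$ into $(\beta^2(y)\alpha\beta(x))\alpha^2\beta(x)$; then, multiplying by $\alpha^3\beta(x)$ and writing $\alpha^2\beta(x)=\beta(b')$, $\alpha^3\beta(x)=\alpha\beta(b')$ with $b'=\alpha^2(x)$, a second use of the same identity gives the right-hand side of (ii). For (iii) one applies the left BiHom-alternative identity in the form $(\beta^2(v)\alpha\beta(v))\alpha^2\beta(w)=\alpha\beta^2(v)(\alpha\beta(v)\alpha^2(w))$ (valid because $as(\beta^2(v),\alpha\beta(v),\alpha^2(w))=0$ by the alternating property above): first to $(\alpha\beta(x)\alpha^2(x))\alpha^3(y)$, obtaining $\alpha^2\beta(x)(\alpha^2(x)\alpha^3\beta^{-1}(y))$, and then to $\alpha^2\beta^2(x)(\alpha^2\beta(x)(\alpha^2(x)\alpha^3\beta^{-1}(y)))$, obtaining the right-hand side of (iii). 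For (iv): the right BiHom-alternative identity turns $\alpha^2\beta(y)(\alpha^2(x)\alpha^3\beta^{-1}(x))$ into $(\alpha\beta(y)\alpha^2(x))\alpha^3(x)$, so the left-hand side becomes $\alpha^2\beta^2(x)((\alpha\beta(y)\alpha^2(x))\alpha^3(x))$; expanding the outer product via the BiHom-associator and discarding the resulting term $as(\alpha\beta^2(x),\alpha\beta(y)\alpha^2(x),\alpha^3(x))$, which vanishes by BiHom-flexibility (the third identity of Lemma~\ref{propriet ass BiH-A}), rewrites this as $(\alpha\beta^2(x)(\alpha\beta(y)\alpha^2(x)))\alpha^3\beta(x)$; a further use of BiHom-flexibility replaces $\alpha\beta^2(x)(\alpha\beta(y)\alpha^2(x))$ by $(\beta^2(x)\alpha\beta(y))\alpha^2\beta(x)$; and a last application of the right BiHom-alternative identity yields the right-hand side of (iv).

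The only real obstacle is bookkeeping of the twist-map exponents, not anything conceptual: to apply a BiHom-alternative or BiHom-flexibility identity to a given subword one must first rename variables, by applying appropriate powers of the commuting automorphisms $\alpha$ and $\beta$, so that the subword literally has the shape $\beta(b)\alpha(b)$ or $\beta^2(v)\alpha\beta(v)$ required by the identity, and at several points one must transport an identity by an automorphism $\alpha^a\beta^b$ so that the exponents on the outer factors line up. A single miscount breaks the chain, but once the exponents are tracked correctly the computations are exactly those that establish the corresponding untwisted identities.
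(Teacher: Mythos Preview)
Your proof is correct. Part (i) is identical in spirit to the paper's: expand $f(x,x,y,x)$ via \eqref{f}, use that $f$ is alternating, and kill the two remaining associators with the alternating property of $as\circ(\beta^2\otimes\alpha\beta\otimes\alpha^2)$.

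For (ii)--(iv) you take a genuinely different, more elementary route. The paper reuses the Bruck--Kleinfeld machinery for (ii), showing first that $as(\beta^2(y)\alpha\beta(x),\alpha^2\beta(x),\alpha^3(x))=0$ via $f(y,x,x,x)=0$ and then invoking a single right BiHom-alternative step; for (iii) and (iv) it cycles associator arguments through the second identity of Lemma~\ref{propriet ass BiH-A} so as to reduce each case to the vanishing established in (i) or (ii). You instead work entirely with the basic identities $as(\beta^2(v),\alpha\beta(v),\alpha^2(w))=0$, $as(w,\beta(b),\alpha(b))=0$, and BiHom-flexibility, applying them twice per part after a suitable change of variables. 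Your approach has the virtue of being independent of $f$ once (i) is done and of mirroring exactly the classical alternative-algebra proofs of $(yx^2)x=(yx)x^2$, $x(x^2y)=x^2(xy)$, $x(yx^2)=(xy)x^2$; the paper's approach has the virtue of exploiting the already-built machinery (alternating $f$, associator cycling) and thereby keeping each part to a one-line reduction. Both are valid; the only hazard in yours, as you note, is the bookkeeping of $\alpha,\beta$ exponents, and your substitutions (e.g.\ $b=\alpha(x)$, $v=\alpha\beta^{-1}(x)$, $v'=\alpha(x)$, $b''=\alpha^2(x)$) check out.
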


\begin{proof}
(i)- For each $x,y \in A$, we have from  Definition \ref{f}
\begin{eqnarray*}
   & &as\big(\beta^2(x)\alpha\beta(x), \alpha^2\beta(y),\alpha^3(x)\big)  \\
   &=& f(x,x,y,x) +as\big(\beta^2(x),\alpha\beta(y),\alpha^2(x)\big)\alpha^3\beta(x)+\alpha^2\beta^2(x)as\big(\alpha\beta(x),\alpha^2(y),\alpha^3\beta^{-1}(x)\big)\\
   &=& \alpha^2\beta^2(x)as\big(\alpha\beta(x),\alpha^2(y),\alpha^3\beta^{-1}(x)\big)\quad  (\text{by Lemma \ref{propriet ass BiH-A} and Proposition \ref{prop:f alternating}}) \\
   &=& \alpha^2\beta^2(x)as\Big(\beta^2\big(\alpha\beta^{-1}(x)\big),\alpha\beta\big(\alpha\beta^{-1}(y)\big),\alpha^2\big(\alpha\beta^{-1}(x)\big)\Big) \\
   &=&  \alpha^2\beta^2(x)\alpha\beta^{-1}\Big(as\big(\beta^2(x),\alpha\beta(y),\alpha^2(x)\big)\Big)= 0 \quad \text{(by Lemma \ref{propriet ass BiH-A})}.
\end{eqnarray*}
(ii)- Let $x, y \in A$. Since the function $f$ defined in \eqref{f} is alternating (Proposition \ref{prop:f alternating}), then $f(y,x,x,x)=0$.
That is
\begin{eqnarray*}
   & &as\big(\beta^2(y)\alpha\beta(x), \alpha^2\beta(x),\alpha^3(x)\big)  \\ &=&as\big(\beta^2(x),\alpha\beta(x),\alpha^2(x)\big)\alpha^3\beta(y)+\alpha^2\beta^2(x)as\big(\alpha\beta(y),\alpha^2(x),\alpha^3\beta^{-1}(x)\big)\\
   &=&0\quad  (\text{by Lemma \ref{propriet ass BiH-A}}),
\end{eqnarray*}
which implies that
$$\Big(\big(\beta^2(y)\alpha\beta(x)\big)\alpha^2\beta(x)\Big)\alpha^3\beta(x)=\Big(\alpha\beta^2(y)\alpha^2\beta(x)\Big)\Big(\alpha^2\beta(x)\alpha^3(x)\Big).$$
Now, using the fact that $A$ is a BiHom-alternative algebra, therefore,
$$as\big(\beta^2(y),\alpha\beta(x),\alpha^2(x)\big)=0,$$
which implies
$$\big(\beta^2(y)\alpha\beta(x)\big)\alpha^2\beta(x)=\alpha\beta^2(y)\big(\alpha\beta(x)\alpha^2(x)\big).$$
Thus
 $$\Big(\alpha\beta^2(y)\big(\alpha\beta(x)\alpha^2(x)\big)\Big)\alpha^3\beta(x)=\Big(\alpha\beta^2(y)\alpha^2\beta(x)\Big)\Big(\alpha^2\beta(x)\alpha^3(x)\Big).$$
(iii)- Given $x, y \in A$, one has
\begin{eqnarray*}
  as (\alpha\beta^2(x),\alpha\beta(x)\alpha^2(x),\alpha^3(y)\big) &=& as\big(\beta^2(\alpha(x)),\alpha\beta(x\alpha\beta^{-1}(x)),\alpha^2(\alpha(y))\big) \\
   &=& as\big(\beta^2(x)\alpha\beta(x), \alpha^2\beta(y),\alpha^3(x)\big)=0 \quad (\text{using Lemma \ref{propriet ass BiH-A} and (i)}).
\end{eqnarray*}
Then
\begin{eqnarray*}
  \alpha^2\beta^2(x)\Big(\big(\alpha\beta(x)\alpha^2(x)\big)\alpha^3(y)\Big) &=& \Big(\alpha\beta^2(x)\big(\alpha\beta(x)\alpha^2(x)\big)\Big)\alpha^3\beta(y) \\
   &=& \Big(\big(\beta^2(x)\alpha\beta(x)\big)\alpha^2\beta(x)\Big)\alpha^3\beta(y)  \quad (\text{since $A$ is BiHom-alternative}) \\
   &=& \Big(\alpha\beta^2(x)\alpha^2\beta(x)\Big)\Big(\alpha^2\beta(x)\alpha^3(y)\Big),
\end{eqnarray*}
since $as\big(\beta^2(x)\alpha\beta(x), \alpha^2\beta(x),\alpha^3(y)\big)=0$.
This prove the identity (iii).

(iv)- Taken two elements $x, y \in A$. Then we have
\begin{eqnarray*}
  as\big(\alpha\beta^2(x),\alpha^2\beta(y),\alpha^2(x)\alpha^3\beta^{-1}(x)\big) &=& as\big(\beta^2(\alpha(x)),\alpha\beta(\alpha(y)),\alpha^2(x\alpha\beta^{-1}(x))\big) \\
   &=& as\big(\beta^2(x)\alpha\beta(x), \alpha^2\beta(x),\alpha^3(y)\big)=0.
\end{eqnarray*}
Thus
 $$\alpha^2\beta^2(x)\Big(\alpha^2\beta(y)\big(\alpha^2(x)\alpha^3\beta^{-1}(x)\big)\Big)=
 \Big(\alpha\beta^2(x)\alpha^2\beta(y)\Big)\Big(\alpha^2\beta(x)\alpha^3(x)\Big).$$
\end{proof}
Now, we  prove Theorem \ref{jordan admissible}.
\begin{proof}
Let $(A,\mu,\alpha,\beta)$ be a regular BiHom-alternative algebra and let $(A^+,\ast,\alpha,\beta)$ be its plus BiHom-algebra. We have to show the BiHom-Jordan identity in $A^+$. First note that
$$\beta^2(x)\ast\alpha\beta(x)=\frac{1}{2}\Big(\beta^2(x)\alpha\beta(x)+\beta^2(x)\alpha\beta(x)\Big)=\beta^2(x)\alpha\beta(x).$$
Then
\begin{eqnarray*}
& & 4 as_{A^+}\big(\beta^2(x)\ast\alpha\beta(x), \alpha^2\beta(y),\alpha^3(x)\big) =4 as_{A^+}\big(\beta^2(x)\alpha\beta(x), \alpha^2\beta(y),\alpha^3(x)\big) \\
   &=&\Big(\big(\beta^2(x)\alpha\beta(x)\big)\ast\alpha^2\beta(y)\Big)\ast\alpha^3\beta(x)-
   \Big(\alpha\beta^2(x)\alpha^2\beta(x)\Big)\ast\Big(\alpha^2\beta(y)\ast\alpha^3(x)\Big)  \\
   &=& \Big(\big(\beta^2(x)\alpha\beta(x)\big)\alpha^2\beta(y) +\alpha\beta^2(y)\big(\alpha\beta(x)\alpha^2(x)\big)\Big)\ast\alpha^3\beta(x)\\
   & -&  \Big(\alpha\beta^2(x)\alpha^2\beta(x)\Big)\ast\Big(\alpha^2\beta(y)\alpha^3(x)+\alpha^2\beta(x)\alpha^3(y)\Big)   \\
   &=& \Big(\big(\beta^2(x)\alpha\beta(x)\big)\alpha^2\beta(y)\Big)\alpha^3\beta(x)+ \Big(\alpha\beta^2(y)\big(\alpha\beta(x)\alpha^2(x)\big)\Big)\alpha^3\beta(x) \\
   &+& \alpha^2\beta^2(x)\Big(\big(\alpha\beta(x)\alpha^2(x)\big)\alpha^3(y)\Big)+ \alpha^2\beta^2(x)\Big(\alpha^2\beta(y)\big(\alpha^2(x)\alpha^3\beta^{-1}(x)\big)\Big) \\
   &-& \Big(\alpha\beta^2(x)\alpha^2\beta(x)\Big)\Big(\alpha^2\beta(y)\alpha^3(x)\Big)-
   \Big(\alpha\beta^2(x)\alpha^2\beta(x)\Big)\Big(\alpha^2\beta(x)\alpha^3(y)\Big) \\
   &-& \Big(\alpha\beta^2(y)\alpha^2\beta(x)\Big)\Big(\alpha^2\beta(x)\alpha^3(x)\Big)-
   \Big(\alpha\beta^2(x)\alpha^2\beta(y)\Big)\Big(\alpha^2\beta(x)\alpha^3(x)\Big)  \\
   &=& as\big(\beta^2(x)\alpha\beta(x), \alpha^2\beta(y),\alpha^3(x)\big)  \\
   &+& \Big(\alpha\beta^2(y)\big(\alpha\beta(x)\alpha^2(x)\big)\Big)\alpha^3\beta(x)-
      \Big(\alpha\beta^2(y)\alpha^2\beta(x)\Big)\Big(\alpha^2\beta(x)\alpha^3(x)\Big)\\
   &+& \alpha^2\beta^2(x)\Big(\big(\alpha\beta(x)\alpha^2(x)\big)\alpha^3(y)\Big)-
      \Big(\alpha\beta^2(x)\alpha^2\beta(x)\Big)\Big(\alpha^2\beta(x)\alpha^3(y)\Big)\\
   &+& \alpha^2\beta^2(x)\Big(\alpha^2\beta(y)\big(\alpha^2(x)\alpha^3\beta^{-1}(x)\big)\Big)-
      \Big(\alpha\beta^2(x)\alpha^2\beta(y)\Big)\Big(\alpha^2\beta(x)\alpha^3(x)\Big) \\
   &=& 0 \qquad (\text{by Lemma \ref{lem thm jordan admis}} ).
\end{eqnarray*}
Therefore, $A^+$ is a BiHom-Jordan algebra which implies that $A$ is BiHom-Jordan-admissible.
\end{proof}

\begin{prop}
Let $(A,\mu)$ be a Jordan-admissible algebra and $\alpha, \beta:A\rightarrow A$ be two algebra morphisms such that $\alpha  \beta  =\beta  \alpha$. Then the induced BiHom-algebra $A_{\alpha,\beta}=(A,\mu'=\mu( \alpha \otimes\beta),\alpha,\beta)$ is a BiHom-Jordan-admissible algebra.

\end{prop}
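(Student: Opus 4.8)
The plan is to reduce the statement to the construction theorem for BiHom-Jordan algebras proved earlier in this section, namely the one asserting that a Jordan algebra twisted by two commuting algebra morphisms is a BiHom-Jordan algebra. Unwinding the definitions, the induced BiHom-algebra $A_{\alpha,\beta}=(A,\mu'=\mu\circ(\alpha\otimes\beta),\alpha,\beta)$ is BiHom-Jordan-admissible precisely when its plus BiHom-algebra $(A_{\alpha,\beta})^+=(A,\ast,\alpha,\beta)$ is a BiHom-Jordan algebra, where, writing $\mu(a,b)=ab$,
$$x\ast y=\frac12\Big(\mu'(x,y)+\mu'\big(\alpha^{-1}\beta(y),\alpha\beta^{-1}(x)\big)\Big).$$
As elsewhere in this section, $\alpha$ and $\beta$ are taken to be invertible, so that the plus BiHom-algebra is defined.

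The first step is to simplify $\ast$. Since $\mu'(x,y)=\alpha(x)\beta(y)$ and, using $\alpha\beta=\beta\alpha$,
$$\mu'\big(\alpha^{-1}\beta(y),\alpha\beta^{-1}(x)\big)=\alpha\big(\alpha^{-1}\beta(y)\big)\,\beta\big(\alpha\beta^{-1}(x)\big)=\beta(y)\,\big(\beta\alpha\beta^{-1}\big)(x)=\beta(y)\,\alpha(x),$$
one gets
$$x\ast y=\tfrac12\big(\alpha(x)\beta(y)+\beta(y)\alpha(x)\big)=\alpha(x)\bullet\beta(y),$$
where $a\bullet b=\tfrac12(ab+ba)$ is the product of the plus algebra $A^+=(A,\bullet)$ of the ordinary algebra $(A,\mu)$. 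In other words $\ast=\bullet\circ(\alpha\otimes\beta)$, so $(A_{\alpha,\beta})^+$ is exactly the BiHom-algebra induced from $(A^+,\bullet)$ by the pair $(\alpha,\beta)$.

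It then remains to feed this into the earlier construction theorem. By hypothesis $(A,\mu)$ is Jordan-admissible, so $(A^+,\bullet)$ is a Jordan algebra; moreover $\alpha$ and $\beta$ are still algebra morphisms of $(A^+,\bullet)$, since $\alpha(a\bullet b)=\tfrac12\big(\alpha(a)\alpha(b)+\alpha(b)\alpha(a)\big)=\alpha(a)\bullet\alpha(b)$ and likewise for $\beta$, and they commute. Hence the construction theorem for BiHom-Jordan algebras applies to $(A^+,\bullet)$ with the commuting morphisms $\alpha,\beta$, and yields that $\big(A,\bullet\circ(\alpha\otimes\beta),\alpha,\beta\big)=(A,\ast,\alpha,\beta)=(A_{\alpha,\beta})^+$ is a BiHom-Jordan algebra, i.e.\ $A_{\alpha,\beta}$ is BiHom-Jordan-admissible. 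The only genuine computation is the simplification of $\ast$ in the middle step, in particular the collapse $\beta\alpha\beta^{-1}=\alpha$ coming from $\alpha\beta=\beta\alpha$; after that the conclusion is immediate. One could instead verify the BiHom-commutativity and the BiHom-Jordan identity for $\ast$ by hand, extracting a factor $\alpha^3\beta^2$ from $as_\mu(\mu(x,x),y,x)$ exactly as in the proof of the BiHom-Jordan construction theorem, but the reduction above avoids that.
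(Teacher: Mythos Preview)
Your argument is correct and shares with the paper's proof the same key computation: both establish that $x\ast y=\alpha(x)\circ\beta(y)$ where $\circ=\tfrac12(\mu+\mu^{op})$ is the Jordan product on $A$. The difference is only in how the proof is concluded. The paper, having obtained $\ast=\circ\,(\alpha\otimes\beta)$, verifies the BiHom-commutativity and the BiHom-Jordan identity for $\ast$ directly, pulling out the factor $\alpha^3\beta^2$ from $as_{\alpha,\beta}\big(\beta^2(x)\ast\alpha\beta(x),\alpha^2\beta(y),\alpha^3(x)\big)$ to reduce to the ordinary Jordan identity for $\circ$---precisely the alternative you describe in your final sentence. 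You instead recognize that $\ast=\circ\,(\alpha\otimes\beta)$ is exactly the twist appearing in the earlier construction theorem for BiHom-Jordan algebras, check that $\alpha,\beta$ remain commuting morphisms of $(A,\circ)$, and invoke that theorem. Your route is a bit more economical since it avoids repeating a computation already performed, but the two proofs are essentially the same in substance.
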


\begin{proof}
We must prove that $(A,\ast,\a,\b)$ is  a BiHom-Jordan algebra where $$x\ast y=\frac{1}{2}\Big( \mu'(x,y)+\mu'\big(\alpha^{-1}\beta(y),\alpha\beta^{-1}(x)\big)\Big)$$
First, note that $(A,\circ)$ is a Jordan algebra, where $\circ=\dis\frac{1}{2}(\mu+\mu^{op})$.\\
In addition, it is easy to check that $x \ast y=\a(x)\circ \b(y)$. Then,
$$\b(x)\ast \a(y)=\a\b(x)\circ \b\a(y)=\b\a(y) \circ \a\b(x)=\b(y) \ast \a(x).$$
On the other hand,
\begin{eqnarray*}
   & & as_{\a,\b}(\b^2(x)\ast \a\b(x),\a^2\b(y),\a^3(x)) \\
  &=& ((\a\b^2(x)\circ \a^2\b(x))\ast \a^2\b(y))\ast \a^3\b(x)-(\a^2\b^2(x)\circ \a^2\b^2(x))\ast (\a^2\b(y)\ast\a^3(x))  \\
   &=& \a^3\b^2\big( \underbrace{((x\circ x)\circ y)\circ x-(x \circ x)\circ (y \circ x)}_{=0}  \big)\ \text{(since}\ (A,\circ)\  \text{is a Jordan algebra})  \\
   &=& 0.
\end{eqnarray*}
This finishes the proof.
\end{proof}





\begin{thebibliography}{AA}

\bibitem{KadriAbdelkaderAbdenacer}
 E. Abdaoui,, A. Ben Hassine, and A. Makhlouf. \emph{BiHom-Lie colour algebras structures}. arXiv preprint arXiv:1706.02188 (2017).

\bibitem{albert}
A.A. Albert, \emph{A structure theory for Jordan algebras}, Ann. Math. 48 (1947) 546--567.


\bibitem{albert2}
A.A. Albert, \emph{Power-associative rings}, Trans. Amer. Math. Soc. 64 (1948) 552--593.

\bibitem{albert3}
A.A. Albert, \emph{On right alternative algebras}, Ann. Math. 50 (1949) 318-328.
\bibitem{mikheev}
I.M. Mikheev, \emph{On an identity in right alternative rings}. Alg. i Logika 8 (1969) 357-366; english translation:  Alg. Logic 8 (1969) 204--211.



\bibitem{ama}
F. Ammar and A. Makhlouf, \emph{Hom-Lie superalgebras and Hom-Lie admissible superalgebras}.  J. Algebra \textbf{324} (2010), no. 7, 1513--1528.

\bibitem{baez}
J.C. Baez, \emph{The octonions}. Bull. Amer. Math. Soc. 39 (2002) 145--205.

\bibitem{bk}
R.H. Bruck and E. Kleinfeld, \emph{The structure of alternative division rings}, Proc. Amer. Math. Soc. 2 (1951) 878--890.





\bibitem{GrazianiMakhloufMeniniPanaite}
G. Graziani, A. Makhlouf, C. Menini, F.  Panaite, \emph{BiHom-associative algebras, BiHom-Lie algebras and BiHom-bialgebras}. SIGMA Symmetry Integrability Geom. Methods Appl. 11 (2015), Paper 086, 34 pp.
\bibitem{gt}
F. G\"{u}rsey and C.-H. Tze, On the role of division, \emph{Jordan and related algebras in particle physics}, World Scientific, Singapore, 1996.
\bibitem{GuoZhangWang}
Guo, S., Zhang, X.,  Wang, S. (2017). \emph{The construction and deformation of BiHom-Novikov algebras}. J. Geom. Phys. 132 (2018), 460--472.

\bibitem{hls}
J.T. Hartwig, D. Larsson, and S.D. Silvestrov, \emph{Deformations of Lie algebras using $\sigma$-derivations}, J. Algebra 295 (2006) 314--361.

\bibitem{jacobson}
N. Jacobson, \emph{Structure and representations of Jordan algebras}, Amer. Math. Soc., Providence, RI, 1968.

\bibitem{jvw}
P. Jordan, J. von Neumann, and E. Wigner, \emph{On an algebraic generalization of the quantum mechanical formalism}, Ann. Math. 35 (1934) 29--64.

\bibitem{kerdman}
F.S. Kerdman, \emph{Analytic Moufang loops in the large}.  Alg. Logic 18 (1980) 325--347.

\bibitem{kuzmin}
E.N. Kuz'min, \emph{The connection between Mal'cev algebras and analytic Moufang loops}.  Alg. Logic 10 (1971) 1--14.
\bibitem{LarssonSilvestrov}
D. Larsson and S. D. Silvestrov, \emph{Quasi-Hom-Lie algebras, Central Extensions and 2-cocycle-like identities}. J. of
Algebra, 288 (2005), 321--344.




\bibitem{mak}
A. Makhlouf, \emph{Hom-alternative algebras and Hom-Jordan algebras}. Int. Electron. J. Algebra 8 (2010), 177--190.

\bibitem{mak2}
A. Makhlouf, \emph{Paradigm of nonassociative Hom-algebras and Hom-superalgebras}. Proceedings of Jordan Structures in Algebra and Analysis Meeting, 143--177, Editorial Circulo Rojo, Almeria, 2010.

\bibitem{ms}
A. Makhlouf and S. Silvestrov, \emph{Hom-algebra structures}, J. Gen. Lie Theory Appl. 2 (2008) 51--64.

\bibitem{ms4}
A. Makhlouf and S. Silvestrov, \emph{Hom-algebras and Hom-coalgebras}. J. Algebra Appl. 9 (2010), no. 4, 553--589.

\bibitem{maltsev}
A.I. Mal'tsev, \emph{Analytic loops}. Mat. Sb. 36 (1955), 569--576.

\bibitem{moufang}
R. Moufang, \emph{Zur struktur von alternativk\"{o}rpern}. Math. Ann. 110 (1935), 416--430.

\bibitem{myung}
H.C. Myung, \emph{Malcev-admissible algebras}. Progress in Math. 64, Birkh\"{a}user, Boston, MA, 1986.

\bibitem{nagy}
P.T. Nagy, \emph{Moufang loops and Malcev algebras}. Sem. Sophus Lie 3 (1993), 65--68.

\bibitem{okubo}
S. Okubo, \emph{Introduction to octonion and other non-associative algebras in physics}, Cambridge Univ. Press, Cambridge, UK, 1995.

\bibitem{ps}
J.M. P\'{e}rez-Izquierdo and I.P. Shestakov, \emph{An envelope for Malcev algebras}. J. Algebra 272 (2004) 379--393.

\bibitem{sabinin}
L.V. Sabinin, Smooth quasigroups and loops. Kluwer Academic, The Netherlands, 1999.

\bibitem{sagle}
A.A. Sagle, \emph{Malcev algebras}. Trans. Amer. Math. Soc. 101 (1961) 426--458.

\bibitem{schafer}
R.D. Schafer, \emph{An introduction to nonassociative algebras}. Dover, New York, 1996.

\bibitem{sv}
T.A. Springer and F.D. Veldkamp, \emph{Octonions. Jordan algebras, and exceptional groups}. Springer Verlag, Berlin, 2000.

\bibitem{tw}
J. Tits and R.M. Weiss, \emph{Moufang polygons}. Springer-Verlag, Berlin, 2002.


\bibitem{yau2}
D. Yau, \emph{Hom-algebras and homology}, J. Lie Theory 19 (2009), 409--421.


\bibitem{Yau3}
 D. Yau, \emph{Hom-Maltsev, Hom-alternative and Hom-Jordan algebras}. International Electronic Journal of algebras,
11 (2012), 177--217.


%
%
%
%
%
%
%

\end{thebibliography}
\end{document}